\documentclass[11pt, dvipsnames]{amsart}

\usepackage{kantlipsum} 

\setlength{\textwidth}{\paperwidth}
\addtolength{\textwidth}{-2in}
\calclayout

\usepackage{graphics,graphicx}

\newtheorem{theorem}{Theorem}

\newtheorem{example}{Example}%
\newtheorem{remark}{Remark}%
\newtheorem{lemma}{Lemma}

\numberwithin{equation}{section}
\usepackage{float}
\usepackage{booktabs}
\usepackage{multirow}
\usepackage{color}
\usepackage{xcolor}
\usepackage{amsmath}
\usepackage{amssymb}

\newcommand{\R}{\mathbb{R}} 
 \newcommand{\bm}[1]{\boldsymbol{#1}}
\newcommand{\dv}{\mathrm{div}}
\newtheorem{setting}{Setting}[section]
\newtheorem{assumption}{Assumption}[section]
\usepackage[foot]{amsaddr}

\begin{document}

\title[PINNs Error Analysis ]{A Unified Framework for the Error Analysis of Physics-Informed Neural Networks}

\begin{abstract}
We prove a priori and a posteriori error estimates for physics-informed neural networks (PINNs) for linear PDEs. We analyze elliptic equations in primal and mixed form, elasticity, parabolic, hyperbolic and Stokes equations; and a PDE constrained optimization problem. For the analysis, we propose an abstract framework in the common language of bilinear forms, and we show that coercivity and continuity lead to error estimates. The obtained estimates are sharp and reveal that the $L^2$ penalty approach for initial and boundary conditions in the PINN formulation weakens the norm of the error decay. Finally, utilizing recent advances in PINN optimization, we present numerical examples that illustrate the ability of the method to achieve accurate solutions.

   \vspace{1em}
 \smallskip
  \noindent \textit{Key words}.  
  Physics-informed neural networks; error analysis; a priori estimates; a posteriori estimates.   \smallskip 
\end{abstract}
\author{Marius Zeinhofer$^1$, Rami Masri$^1$, and  Kent--Andr\'{e} Mardal$^2$}

\email{mariusz@simula.no, rami@simula.no, kent-and@math.uio.no}
\address{$^1$Department of Numerical Analysis and Scientific Computing, Simula Research Laboratory, Oslo, 0164 Norway. MZ and RM acknowledge support from the Research Council of Norway (grant number 303362) and via FRIPRO (grant number 324239). 
}
\address{$^2$Department of Mathematics, University of Oslo, Norway.}
\date{ \today }

\maketitle

\section{Introduction} 
Employing neural networks as ansatz functions for the solution of PDEs goes back to the works of \cite{dissanayake1994neural, lagaris1998artificial} where residual minimization of the strong form of the PDE -- now typically referred to as the PINN formulation -- was proposed. Embracing increased computational power due to the usage of GPUs, this approach was later popularized and further developed by \cite{raissi2019physics, karniadakis2021physics, jagtap2020adaptive}. Other formulations, based on variational principles or stochastic formulations have been proposed in \cite{yu2018deep, han2018solving, han2017deep}. 

Neural networks are increasingly investigated for use in computational fluid dynamics \cite{cai2021physics, jin2021nsfnets, cai2021artificial}, solid mechanics \cite{haghighat2021physics} and high-dimensional PDEs \cite{han2017deep, pfau2020ab, hermann2020deep}, to name but a few areas of ongoing research. In these application, meshfreeness and ease of implementation for both forward, inverse and parametric problems are the main motivation to use neural network based PDE solvers. With Nvidia Modulus, an industrial and highly optimized software framework for the implementation of physics-informed neural networks is freely available \cite{hennigh2021nvidia}.

\paragraph{Existing Literature}  
In the works \cite{mishraforward, de2022error, 10.1093/imanum/drac085}, the authors provide estimates, relating the loss function's value after training to the achieved error and required network size. More precisely,  
they bound the $L^2$ error by the loss function's value multiplied by a constant depending on the network. 
The treated problems include semilinear parabolic, Euler, Navier-Stokes and Kolmogorov equations. The paper \cite{shin2020error} focuses on linear elliptic PDEs and provides a priori estimates under the assumption of finding the global minimum in the neural network ansatz space. In \cite{muller2022notes}, the authors provide a priori and a posteriori estimates for linear elliptic and parabolic equations under the assumption of exactly satisfied boundary and initial conditions. Another line of work \cite{xu2020finite, muller2022error} focuses on the analysis of variational formulations of elliptic PDEs.

\paragraph{Main Contributions}
In this paper, we provide a priori and a posteriori error estimates of the PINN formulation of linear elliptic (Poisson, Darcy and elasticity), parabolic and hyperbolic equations. Furthermore, we analyze Stokes equations in steady state, and we consider a PDE constrained optimization problem for elliptic equations with distributed control. We extend the existing literature in the following ways:
\begin{itemize}

    \item We provide sharp error estimates in Sobolev norms. The sharpness of the error estimates reveals problems with the PINN formulation. Precisely, the $L^2$ penalty approach for boundary, initial and possible divergence constraints deteriorates the norm in which the error decays. For example, in the case of Poisson's equation, the sharpness of the following coercivity estimate
    \begin{equation}
\|u\|_{H^s(\Omega)}^2 \lesssim \|\Delta u \|^2_{L^2(\Omega)} + \|u\|_{L^2(\partial \Omega)}^2  \,\,\, \mathrm{iff} \,\,\,  s \leq 1/2,  \nonumber   
    \end{equation}
  dictates that the error decays at most in $H^{1/2}(\Omega)$. Similar issues  arise for all other equations.  
    Conversely, if feasible, encoding these constraints directly into the ansatz \cite{sukumar2022exact, richter2022neural} avoids these problems.

    \item Using recent advances in optimization algorithms \cite{muller2023achieving}, we present numerical simulations in three or four dimensions of the analyzed equations. In all cases, we are able to efficiently achieve highly accurate solutions with comparatively small network size without laborious hyperparameter tuning.
    
    \item In comparison to \cite{mishraforward, de2022error, 10.1093/imanum/drac085}, we relax the assumption on the solution's regularity. In the cited articles, the authors work with classical solutions and derive error estimates in the $L^2$ norm with constants depending on the neural network approximation, see for example \cite[Theorem 3.4]{mishraforward} where the constant $C_1$  depends on the neural network ansatz. The constants in our estimates do not depend on the neural network approximation.
   
    \end{itemize}

\paragraph{Proof Strategy}
The main technical difficulty in the analysis lies in the $L^2$ penalty approach for boundary, initial and possible divergence constraints. Inspecting energy and regularity estimates, it is clear that the $L^2$ norm is not the natural one occurring in these results. However, the $L^2$ penalty approach is prevailing in practice for its simplicity in implementations. This mismatch can be handled similarly for all considered equations in the following way. 1) For the PDE in question, we derive energy estimates with inhomogeneous right-hand side, boundary terms and possibly initial and divergence terms. 2) Assuming smooth enough data, we use regularity theory to derive dual estimates by integration by parts. 3) Interpolating between energy estimates and dual estimates, we obtain the coercivity properties required by our abstract framework which in turn allows to deduce a priori and a posteriori estimates.
 
The remainder of the article is structured as follows: In Section \ref{sec:prelims}, we introduce notation and required mathematical preliminaries. Section \ref{sec:abstract_framework} provides an introduction to physics-informed neural networks and presents the abstract framework used for error analysis. In Section \ref{sec:applications},  we prove the crucial coercivity and continuity estimates which show error estimates for all the aforementioned equations. Finally, in Section \ref{sec:numerics} we present numerical results.

\section{Preliminaries}\label{sec:prelims} Throughout the paper, we let $\Omega \subset \mathbb{R}^d$ be a bounded domain. We use standard notation for the Sobolev spaces $W^{m,p}(\Omega)$ for $m 
\in \mathbb N_0$ and $p \in [1,\infty]$. For $p=2$,  the Hilbert spaces are denoted by  $H^{m}(\Omega) = W^{m,2}(\Omega)$. For $s \in (0,1)$ and $m \in \mathbb{N}_0$, we recall the fractional spaces $H^{m+s}(\Omega)$ normed by 
\begin{equation} \label{eq:gagliardo_norm}
\|u\|_{H^{m+s}(\Omega)}^2 = \|u\|_{H^m(\Omega)}^2 + \sum_{|\alpha| = m} \int_{\Omega} \int_{\Omega} \frac{|D^{\alpha} u(x) - D^{\alpha} v(y)|^2}{|x - y|^{d + 2s}} \mathrm{d}x \mathrm{d}y,  
\end{equation}
where $\alpha$ is a mutli-index and $D^{\alpha}$ is the corresponding distributional derivative. For a Banach space $W$, we denote its dual by $W^*$.  In the context of Hilbert spaces, for a function $u\in L^2(\Omega)$, we mean for $k 
 \geq0$
\begin{equation*}
    \|u\|_{H^k(\Omega)^*} = \sup_{\|\varphi\|_{H^k(\Omega)}\leq 1} \int_\Omega u\varphi \,\mathrm dx, 
\end{equation*}
and similarly for $H^s_0(\Omega)$. This means that interpreting a function as an element of a dual space always uses the $L^2$ duality product. 
We also use fractional Bochner spaces, as defined in \cite[Section 2.5.d]{hytonen2016analysis}. Given a Banach space $X$, an interval $I \subset \R$, and  $u \in H^s(I, X)$, define 
\begin{equation}
\|u\|_{H^s(I,X)}^2  = \|u\|^2_{L^2(I,X)} + \int_I \int_I \frac{ \|u(t) - u(\tau )\|_X^2}{|t -\tau |^{1+2s}},  \quad s \in (0,1).  
\end{equation}
For $m \in \mathbb N_0$, $\|u\|_{H^m(I,X)}$ denotes the usual definition of Bochner norms. The space $H^m_0(I,X)$ denotes the space of functions vanishing at initial and final times.

We will often use results from real interpolation theory. Here, we present the  results that are necessary for our exposition, and we refer to \cite[Chapter 14]{brenner2008mathematical} and the references therein for more details. Given two Banach spaces $B_0, B_1$ with $B_1 \subset B_0$, the interpolated space $[B_0, B_1]_{\theta,p}$ with $0  < \theta < 1 $ and $p \in [1,\infty)$ is 
\begin{equation}
[B_0, B_1]_{\theta,p} = \left \{ u \in B_0 \mid \,\,  \int_0^\infty t^{-\theta p - 1  } K(t,u)^p \mathrm{d}t  < \infty \right \}, 
\end{equation}
where 
\[ K(t,u) = \inf_{v \in B_1}  ( \|u-v\|_{B_0} + t \|v\|_{B_1} ).  \]
For our analysis, the following result on linear operators and interpolated spaces is essential. Given Banach spaces $A_1 \subset A_0$ and $B_1 \subset B_0$. Let $T: A_0 \rightarrow B_0$  be a  linear continuous operator which is also continuous as a map from $A_1 \rightarrow B_1$, then there holds that $T$ is continuous from $[A_0, A_1]_{\theta,p} \rightarrow [B_0, B_1]_{\theta,p}$. Moreover,
\begin{equation} \label{eq:operator_interpolation}
\|T\|_{\mathcal{L}([A_0, A_1]_{\theta,p}, [B_0, B_1]_{\theta,p}) } \leq  \|T\|_{\mathcal{L}(A_0, B_0)}^{1- \theta}\|T\|_{\mathcal{L}(A_1, B_1)}^{\theta}.
\end{equation}
For a proof of \eqref{eq:operator_interpolation}, see \cite[Proposition 14.1.5]{brenner2008mathematical}. Throughout the paper, the map $T$ will be a solution operator of a boundary or an initial value problem. This means that the space $A_0$ and $A_1$ will be product spaces. Therefore, we will also make use of the fact that interpolation and Cartesian products commute.

The interpolation of Sobolev spaces yields,  see \cite[Theorem 14.2.7]{brenner2008mathematical}, 
\begin{equation}
[H^m(\Omega), H^k(\Omega)]_{\theta,2} = H^{(1-\theta) m + \theta k}(\Omega), \quad m, k \in [0,\infty), 
\label{eq:interp_Hilbert}
\end{equation}
with equivalent norms. In addition, from the fact that duality and interpolation commute (see \cite[eq. 14.1.8]{brenner2008mathematical}) and \cite[Lemma A2 and Lemma 2.1]{guermond2009lbb}, it follows that  
\begin{align}
[H^1_0(\Omega)^*, (H^2(\Omega)\cap H^1_0(\Omega))^*]_{1/2,2} &= (H^{3/2}(\Omega)\cap H^1_0(\Omega))^*, \label{eq:interp_dual_zero_trace}\\ 
[L^2_0(\Omega), (H^1(\Omega) \cap L^2_0(\Omega))^*]_{1/2,2} &= (H^{1/2}(\Omega) \cap L^2_0(\Omega))^*. \label{eq:interp_dual_zero_avg}
\end{align}
In the above, the space $L^2_0(\Omega) =  \{q \in L^2(\Omega), \,\, \int_{\Omega} q = 0\}$. For interpolation results on the boundary, we only use that 
\begin{equation}
[H^{1/2}(\partial \Omega), H^{1/2} (\partial \Omega)^*]_{1/2,2} = L^2(\partial \Omega), \label{eq:interpolation_dual}
\end{equation}
which holds from an abstract theorem on interpolating between a space and its dual, see \cite{cobos1998theorem} or see  \cite[Theorem B.11]{mclean2000strongly}  for a more general statement.  For interpolation results in Bochner spaces, we use the following result, see \cite[Theorem 2.2.10]{hytonen2016analysis}. For any Banach spaces $B_1 \subset B_0$ and $\theta \in (0,1)$,  
\begin{equation}
[L^2(I , B_0), L^2(I, B_1)]_{\theta,2} = L^2(I, [B_0,B_1]_{\theta, 2} ). \label{eq:interpolation_bochner}
\end{equation}

We now  fix notation with regards to neural networks.
Consider natural numbers \(d, m, L, N_0, \dots, N_L\) and let $\theta = \left((A_1, b_1), \dots, (A_L, b_L)\right)$ be a collection of matrix-vector pairs with $A_l\in\mathbb R^{N_{l}\times N_{l-1}} $, $b_l\in\mathbb R^{N_l}$ and $N_0 = d, N_L = m$. Every matrix-vector pair $(A_l, b_l)$ induces an affine linear map \(T_l\colon \mathbb R^{N_{l-1}} \to\mathbb R^{N_l}\) via $x\mapsto A_lx + b_l$. The \emph{neural network function with parameters} \(\theta\) and with respect to an \emph{activation function} \(\rho\colon\mathbb R\to\mathbb R\) is the function 
\begin{equation}
      u_\theta\colon\mathbb R^d\to\mathbb R^m, \quad x\mapsto T_L(\rho(T_{L-1}(\rho(\cdots \rho(T_1(x)))))).  \label{eq:neural_function}
\end{equation}
The depth of a network is $L$, and the width of the $\ell$--th layer is $N_{\ell}$. We call a network \emph{shallow} if it has depth \(2\) and \emph{deep} otherwise. With $\Theta$ we denote the parameter space of a fixed architecture, it holds that $\Theta = \mathbb R^p$ for a suitable $p\in\mathbb N$. The collection of all neural network functions with a certain parameter space $\Theta$ is denoted by $\mathcal F_\Theta$. Finally, we remark that for smooth activation functions, such as $ 
\rho = \tanh$, it holds $\mathcal F_\Theta \subset  H^k(\Omega)^m $ for all $k\in\mathbb N$.

We denote by $C$ a generic constant independent of the network parameters $\theta$. We use the standard notation $A \lesssim B$ for $A \leq C \, B$.

\section{Abstract Framework} \label{sec:abstract_framework} 
We discuss an abstract framework to derive a C\'ea Lemma for physics-informed neural networks. There are two key differences form the finite element case that need to be taken into account. First, a neural network ansatz class is not a sub-vectorspace. Thus, Galerkin orthogonality does not hold -- this renders the proof of the classical C\'ea Lemma invalid in this case. Second, one can not expect to find global minimizers when employing neural network ansatz functions. Both aspects are addressed in this Section. The provided C\'ea Lemma is from \cite{muller2022error}, however the straight-forward applicability to PINNs was not realized in that paper. The main focus is on linear stationary and evolutionary PDEs. 

\subsection{Linear PDEs}
We focus on linear PDEs that are posed in Hilbert spaces.

\begin{setting}[Abstract Setting]\label{setting:linear_equations}
    Assume we are given two Hilbert spaces, $X$ and $Y$ and a bounded linear map 
    \begin{equation*}
        T:X \to Y, \quad u\mapsto Tu.
    \end{equation*}
    For a fixed $f\in Y$ we aim to solve the equation 
    \begin{equation}\label{eq:abstract_pde}
        Tu = f.
    \end{equation}
    We will always assume that there exists a unique solution $u^*$.\footnote{The requirement of uniqueness can be relaxed, but for simplicity we do not comment on it further.} Correspondingly, we define the least-squares minimization problem as 
    \begin{equation}\label{eq:abstract_pinn_energy}
        E:X \to \mathbb{R}, \quad E(u) = \frac{1}{2}\lVert Tu - f \rVert^2_{Y}.
    \end{equation}
    In order to approximately solve equation \eqref{eq:abstract_pde}, we minimize the energy $E$ over the ansatz class $\mathcal{F}_
    \Theta$. In practice, 
    the above energy is discretized. Namely, we minimise a loss functional 
    $$L: \Theta \rightarrow \mathbb R, \quad L(
    \theta) \approx E(u_\theta). $$ This discrete  loss function is assumed to be computable, and to be a good approximation of the energy $E$ (more precise details will appear later).  This loss function typically results from discretizing the $L^2$-norms, see example \ref{example:forward}. Then, the PINN solution is obtained by approximately solving the following optimization problem. 
    \begin{equation}
    \min_{ \theta \in \Theta} L(\theta). \label{eq:optim_problem_disc}
    \end{equation} 
We denote by  $\theta^* \in \Theta$ the inexact solution given by an optimization procedure applied to 
\eqref{eq:optim_problem_disc}. Note that $\theta^*$ is typically not the global minimizer of $L$. The corresponding neural network function, see \eqref{eq:neural_function}, is denoted by $u_{\theta^*}$  
\end{setting}
The above framework can be used for both forward and certain inverse problems. Typically $Y$ is a product space and consequently $T$ has several components which correspond to the PDE term and to  initial and boundary conditions.

\begin{example}[Forward Problem]\label{example:forward}
    As an example on a domain $\Omega\subset \mathbb R^d$, consider Poisson's equation
    \begin{align*}
        -\Delta u &= f \quad \mathrm{in }\,\, \Omega, 
        \\
        u &= g \quad \mathrm{on }\,\, \partial\Omega, 
    \end{align*}
     for a forcing $f\in L^2(\Omega)$. The corresponding spaces are $X=H^2(\Omega)$, $Y=L^2(\Omega)\times L^2(\partial\Omega)$. The map $T$ is given by
     \begin{equation*}
         T:H^2(\Omega) \to L^2(\Omega)\times L^2(\partial\Omega), \quad Tu = (-\Delta u, u|_{\partial\Omega}).
     \end{equation*}
     The associated least-squares energy is 
     \begin{equation*}
         E:H^2(\Omega) \to \mathbb R, \quad E(u) 
         =
         \frac12 \lVert \Delta u + f \rVert_{L^2(\Omega)}^2 
         +
         \frac12 \lVert u - g \rVert_{L^2(\partial\Omega)}^2.
     \end{equation*}
     To discretize, one can choose collocation points $\{x_i\}_{i=1,\dots,N_\Omega}$ in $\Omega$ and $\{x^b_i\}_{i=1,\dots,N_{\partial\Omega}}$ on the boundary $\partial\Omega$ and define
     \begin{equation*}
         L(\theta)
         =
         \frac{|\Omega|}{2N_{\Omega}}
         \sum_{i=1}^{N_{\Omega}}(\Delta u_\theta(x_i) + f(x_i))^2
         +
         \frac{|\partial\Omega|}{2N_{\partial\Omega}}
         \sum_{i=1}^{N_{\partial\Omega}}(u_\theta(x^b_i) - g(x^b_i))^2.
     \end{equation*}
\end{example}

\begin{example}[Inverse Problem]
    We discuss how to treat a source recovery problem for the Laplacian in the context of our abstract framework. Let $\eta \geq 0$ denote a regularization parameter and $u_d\in L^2(\Omega)$ an interior observation. Denote by $\mathcal H$ the closure of $C^\infty(\overline{\Omega})$ with respect to the norm $\|u\|_{\mathcal H} = \|\Delta u\|_{L^2(\Omega)} + \|u\|_{L^2(\partial\Omega)}$.\footnote{This space is analyzed in detail later, in Section~\ref{sec:inverse_problem}.} Then, we define
    \begin{equation*}
        T:\mathcal H\times L^2(\Omega) \to L^2(\Omega)^3\times L^2(\partial\Omega) = Y, \quad T(u,f) = (f+\Delta u, u, \eta f, u|_{\partial\Omega})
    \end{equation*}
    As a right-hand side, we choose $(0,u_d, 0,g)$ which yields
    \begin{align*}
        E(u,f) 
        &=
        \frac12 \| T(u,f) - (0,u_d,0,g) \|^2_Y
        \\
        &=
        \frac12 \| f + \Delta u \|^2_{L^2(\Omega)}
        +
        \frac12 \| u - u_d \|^2_{L^2(\Omega)}
        +
        \frac12 \| u - g \|^2_{L^2(\partial\Omega)}
        +
        \frac{\eta^2}{2}\|f\|^2_{L^2(\Omega)}.
    \end{align*}
    This corresponds to the typical formulation that is used in the PINN community to formulate inverse/PDE constraint optimization problems.
\end{example}

To derive error estimates, the crucial ingredient is the coercivity of the energy $E$, which can be deduced from properties of the operator $T$.  Assume we are in Setting \ref{setting:linear_equations}. We define the following form
    \begin{equation} \label{eq:form_a_general}
        a:X\times X \to \mathbb R, \quad a(u,v) = \langle Tu, Tv \rangle_Y,
    \end{equation}
    where we utilize the inner product of $Y$.  
\begin{assumption}
\label{assumption:coercivity} We will make use of the following assumptions. 
\begin{itemize}
    \item[(A1)] The form $a$ is coercive, possibly with respect to a weaker (semi) norm on $X$ which we denote by $||| \cdot |||_X$. There exists an $\alpha > 0$ such that for all $u 
    \in X$, 
    \begin{equation}
        a(u, u) \geq \alpha |||u|||_X^2. \label{eq:coercivity_assumption}
    \end{equation} 
    \item[(A2)]  The operator $T$ is bounded. That is, we have that the form $a$ is bounded with 
    \begin{equation}
        a(u,v) \leq \| T \|^2 \, \|u\|_X \|v\|_X. \label{eq:a_bounded}
    \end{equation}   
    \item[(A3)] The quadrature error is controlled.  Fix an arbitrary $C >0$.  For any $\eta > 0$, assume that  we can have $L(\theta) \approx E(u_\theta)$, such that 
    \begin{equation}
        \sup_{|\theta|_{\infty} \leq C } |E(u_\theta) - L(\theta) | \leq \eta .  
    \end{equation}
    In the case of Monte--Carlo integration, the above estimate holds with high probability. 
    \item[(A4)] The optimization error is controlled. For a given $\epsilon > 0$, it holds that 
    \begin{equation}
         L(\theta^*) - \inf_{\psi \in \Theta} L(\psi)  \leq \epsilon
    \end{equation}
\end{itemize} 
\end{assumption}
\begin{remark}[Validity of the assumptions ] 
 As we will see in Theorems \ref{thm:aposteriori_thm} and \ref{thm:cea}, the coercivity properties of the form $a$ determine the norm in which error decay can be expected. Except for the elliptic case, coercivity properties for PINNs are not studied in the existing literature and we will therefore focus on proving (A1) for concrete applications.    
 Continuity properties are straight-forward to establish as will be demonstrated in the remainder of the article.

 An extensive analysis of the quadrature error for all the concrete applications in this paper can be obtained with Rademacher complexity for Monte--Carlo methods or with classical analysis for numerical  quadrature.  A detailed exposition is beyond the scope of this work. We give an example on how to control the quadrature error in Remark \ref{remark:quad_error} below for the Poisson equation. 
 
 The optimization error is difficult to quantify. However, for greedy algorithms, certain bounds can be proved \cite[Section 5]{siegel2023greedy}. 
\end{remark}

\begin{remark}[Bounds on the quadrature error]\label{remark:quad_error} 
For the Monte-Carlo method, where the collocation points are chosen to be identically and independently distributed (i.i.d.) in the domain and on the boundary, tools from Rademacher complexity can be used to obtain uniform bounds on $\eta$. For more details, we refer to \cite[Section 4]{hu2023solving} for deep neural networks and \cite[Section 7]{siegel2023greedy} for shallow networks. As an illustration, consider example  \ref{example:forward} with $\mathcal{F}_\Theta$ consisting of deep neural networks  and assume that  $\{x_i\}_{i=1,\dots,N_\Omega}$ in $\Omega$ and $\{x^b_i\}_{i=1,\dots,N_{\partial\Omega}}$  in $\partial \Omega$ are i.i.d uniformly. Then, following the proof of \cite[Lemma 4.7]{hu2023solving}, one obtains that with probability $1-\epsilon$ for any  $\epsilon \in (0,1)$, 
\begin{equation}
|\eta| \leq \sup_{\theta \in \Theta} |E(u_\theta) - L(\theta) | \leq C_{\Theta} \left( N_{\Omega}^{-1/2}(1+\log^{1/2} (N_\Omega) ) +  N_{\partial \Omega}^{-1/2}(1+\log^{1/2} (N_{\partial \Omega}) )  \right). 
\end{equation}
The constant $C_\Theta$ depends on the ansatz class $\mathcal{F}_{\Theta}$ ( $\mathrm{dim}(\Theta)= p, \sup_{\theta \in \Theta} |\theta|_{l^{\infty}}$, and on the activation function $\rho$), on $\|f\|_{L^{\infty}(\Omega)}$, on $\|g\|_{L^{\infty}(\partial \Omega)}$, and on $\log^{1/2} \epsilon^{-1}$ but not on $N_\Omega$ and $N_{\partial \Omega}$. For shallow neural networks, the dependence on $\log^{1/2}(N_\Omega)$ and $\log^{1/2}(N_{\partial \Omega})$ can be removed, see \cite[Theorem 6]{siegel2023greedy}.   We also  refer to \cite[Section 7.3]{siegel2023greedy} for error estimates when numerical quadrature is used.  
\end{remark}
We now present the a posteriori error  estimate.
\begin{theorem}[A Posteriori Error Control]\label{thm:aposteriori_thm}
    Assume we are in Setting \ref{setting:linear_equations} and Assumption (A1) holds. Let $\mathcal F_\Theta$ be a subset of $X$. For an arbitrary element $u_{\theta^*}\in \mathcal F_\Theta$ we can estimate
    \begin{equation} \label{eq:aposteriori_general}
        ||| u_{\theta^*} - u^* |||_X^2
        \leq 
        \frac{2}{\alpha} ( L(\theta^*) + \eta(\theta^*)) ,
    \end{equation}
    where $\eta(\theta^*)$ represents a local quadrature error $\eta(\theta^*) =  E(u_{\theta^*}) - L(\theta^*)$. If (A3) holds, then we can estimate $\eta(\theta^*) \leq \eta. $
\end{theorem}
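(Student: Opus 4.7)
The plan is to apply the coercivity assumption (A1) directly to the error $u_{\theta^\ast}-u^\ast$, which lies in $X$ since both arguments do, and then recognise that the resulting quantity is precisely twice the least-squares energy evaluated at $u_{\theta^\ast}$.

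Concretely, I would proceed in three short steps. First, I use linearity of $T$ together with the equation $Tu^\ast=f$ to write
\begin{equation*}
T(u_{\theta^\ast}-u^\ast)=Tu_{\theta^\ast}-f,
\end{equation*}
so that by the definition \eqref{eq:form_a_general} of the bilinear form $a$,
\begin{equation*}
a(u_{\theta^\ast}-u^\ast,u_{\theta^\ast}-u^\ast)=\|Tu_{\theta^\ast}-f\|_Y^2=2E(u_{\theta^\ast}).
\end{equation*}
Second, I invoke coercivity (A1) with $u\coloneqq u_{\theta^\ast}-u^\ast$ to obtain
\begin{equation*}
\alpha\,|||u_{\theta^\ast}-u^\ast|||_X^2\le a(u_{\theta^\ast}-u^\ast,u_{\theta^\ast}-u^\ast)=2E(u_{\theta^\ast}).
\end{equation*}
Third, I substitute the trivial identity $E(u_{\theta^\ast})=L(\theta^\ast)+\eta(\theta^\ast)$, which is simply the definition of $\eta(\theta^\ast)$, and divide by $\alpha$ to arrive at \eqref{eq:aposteriori_general}. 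The final sentence of the theorem is immediate: under (A3) we have $\eta(\theta^\ast)\le\sup_{|\theta|_\infty\le C}|E(u_\theta)-L(\theta)|\le\eta$, provided $\theta^\ast$ satisfies the boundedness condition.

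There is no real obstacle here; the statement is essentially a direct reading of coercivity. The only thing worth flagging is that the argument uses the existence and uniqueness of $u^\ast\in X$ (guaranteed in Setting \ref{setting:linear_equations}) so that the difference $u_{\theta^\ast}-u^\ast$ is a legitimate test element in $X$, and that $\mathcal F_\Theta\subset X$ as hypothesised so that $a(u_{\theta^\ast}-u^\ast,\cdot)$ is well defined. No use of the continuity assumption (A2) or of the optimization bound (A4) is needed for this a posteriori estimate, which is exactly what one expects: (A2) and (A4) will enter only in the a priori C\'ea-type bound of Theorem \ref{thm:cea}.
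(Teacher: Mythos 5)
Your proof is correct and follows essentially the same route as the paper: the identity $a(u_{\theta^*}-u^*,u_{\theta^*}-u^*)=2E(u_{\theta^*})$ (the paper phrases it as $E(u_{\theta^*})-E(u^*)=\tfrac12 a(u_{\theta^*}-u^*,u_{\theta^*}-u^*)$ with $E(u^*)=0$, which is the same computation via linearity of $T$ and $Tu^*=f$), followed by coercivity (A1) and the definition of $\eta(\theta^*)$. Nothing further is needed.
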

\begin{proof}
We observe that \begin{equation}\label{eq:energy_scaling}
        E(u_{\theta^*}) - E(u^*) 
        = \frac12 a(u_{\theta^*} - u^*, u_{\theta^*} - u^*).  
    \end{equation}
    which holds by expanding the right hand side and using  that $u^*$ solves the PDE. With the coercivity property \eqref{eq:coercivity_assumption}, we obtain 
    \begin{equation*}
        ||| u_{\theta^*} - u^* |||^2_X 
        \leq
        \frac{1}{\alpha} a(u_{\theta^*} - u^*, u_{\theta^*} - u^*)
        = 
        \frac{2}{\alpha}E(u_{\theta^*})
        =
        \frac{2}{\alpha}(L(\theta^*) + \eta(\theta^*)).
    \end{equation*}
    This completes the proof.
\end{proof}
We now present a C\'ea's type estimate.
\begin{theorem}[C\'ea's Lemma]\label{thm:cea}
    Assume we are in Setting \ref{setting:linear_equations} and assumptions (A1)-(A2) hold. Let $\mathcal F_\Theta$ be a subset of $X$. For an arbitrary element $u_{\theta^*}\in \mathcal F_\Theta$ we can estimate
    \begin{equation}
        ||| u_{\theta^*} - u^* |||_X \leq \sqrt{\frac{2 \, \delta}{\alpha} + \frac{ \|T \|^2}{\alpha}\inf_{u_\psi\in \mathcal F_\Theta}\|u_\psi - u^*\|^2_X} \,\, , \label{eq:main_thm_estimate}
    \end{equation}
    where \begin{equation}
        \delta = \delta(u_{\theta^*}) = E(u_{\theta^*}) - \inf_{u_\psi\in \mathcal F_\Theta}E(u_\psi). \label{eq:delta_opt}
    \end{equation}
     Further, we have the following bound on $\delta$.
 \begin{align} \label{eq:explicit_bound_delta}
|\delta(u_{\theta^*})| \leq  \sup_{\theta \in \Theta }|  E(u_\theta) - L(\theta) |  + (L(\theta^*) - \inf_{\psi \in \Theta} L(\psi) ).  
\end{align}   
 In addition, if $\Theta = \{\theta \in \mathbb{R}^p, \,\, |\theta|_{\infty} \leq C\}$ for some $C >0$ and if assumptions (A3) and (A4) hold, then $\delta$ can be a priori controlled by the quadrature and the optimization error, i.e. $
    \delta  \leq  \eta + \epsilon.$ 
    \end{theorem}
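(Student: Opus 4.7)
The plan is to reduce the argument to the polarization identity already used in Theorem \ref{thm:aposteriori_thm}, then split the energy of $u_{\theta^*}$ into a best-approximation residual (handled by continuity (A2)) and an optimization gap $\delta$ (handled by (A3)--(A4)).

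First, I would repeat the observation that since $u^*$ solves $Tu^* = f$, expanding $E(u) = \tfrac{1}{2}\|Tu - Tu^*\|_Y^2$ gives
\begin{equation*}
E(u) \;=\; \tfrac{1}{2}\, a(u - u^*, u - u^*), \qquad u \in X.
\end{equation*}
Coercivity (A1) then delivers the same initial reduction as in \eqref{eq:energy_scaling}:
\begin{equation*}
||| u_{\theta^*} - u^* |||_X^2 \;\leq\; \tfrac{1}{\alpha}\, a(u_{\theta^*} - u^*, u_{\theta^*} - u^*) \;=\; \tfrac{2}{\alpha}\, E(u_{\theta^*}).
\end{equation*}

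Next, by the very definition of $\delta$ in \eqref{eq:delta_opt}, I would write $E(u_{\theta^*}) = \delta + \inf_{u_\psi \in \mathcal{F}_\Theta} E(u_\psi)$, and apply the polarization identity once more together with continuity (A2) to bound $E(u_\psi) = \tfrac{1}{2} a(u_\psi - u^*, u_\psi - u^*) \leq \tfrac{1}{2}\|T\|^2 \|u_\psi - u^*\|_X^2$ for every $u_\psi \in \mathcal{F}_\Theta$. Taking the infimum and substituting above yields
\begin{equation*}
||| u_{\theta^*} - u^* |||_X^2 \;\leq\; \tfrac{2\delta}{\alpha} + \tfrac{\|T\|^2}{\alpha} \inf_{u_\psi \in \mathcal{F}_\Theta}\|u_\psi - u^*\|_X^2,
\end{equation*}
which is \eqref{eq:main_thm_estimate} after taking a square root.

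For \eqref{eq:explicit_bound_delta}, I would insert $\pm L(\theta^*) \pm L(\psi)$ to telescope: for any $\psi \in \Theta$,
\begin{equation*}
E(u_{\theta^*}) - E(u_\psi) \;=\; \bigl(E(u_{\theta^*}) - L(\theta^*)\bigr) + \bigl(L(\theta^*) - L(\psi)\bigr) + \bigl(L(\psi) - E(u_\psi)\bigr).
\end{equation*}
The first and third terms are controlled by $\sup_{\theta \in \Theta}|E(u_\theta) - L(\theta)|$, and taking the supremum over $\psi$ (equivalently, infimum of $E(u_\psi)$) converts the middle term into $L(\theta^*) - \inf_\psi L(\psi)$. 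This gives \eqref{eq:explicit_bound_delta}. Finally, on the bounded box $\{|\theta|_\infty \leq C\}$, (A3) and (A4) apply to the two resulting terms and yield $\delta \leq \eta + \epsilon$.

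I do not foresee any serious obstacle: all of the content sits in Theorem \ref{thm:aposteriori_thm} plus an elementary triangle-type decomposition for $\delta$. The one cosmetic subtlety is that the naive three-term split produces two copies of $\sup_\theta|E-L|$; this is absorbed either by rescaling (A3) to tolerance $\eta/2$ or by viewing the $\sup$ as a single quantity $\eta$ that already accommodates the constant.
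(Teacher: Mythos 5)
Your proof is correct and follows essentially the same route as the paper: the identical coercivity--continuity splitting of $E(u_{\theta^*})$ into $\delta$ plus the best-approximation term for \eqref{eq:main_thm_estimate}, and the same telescoping of $\delta$ through the discrete loss $L$ for \eqref{eq:explicit_bound_delta}. Your remark about the two copies of $\sup_{\theta\in\Theta}|E(u_\theta)-L(\theta)|$ is apt---the paper's own proof bounds the first and third terms of its decomposition by that same supremum and silently drops the resulting factor of two, so your rescaling/absorption comment is, if anything, the more careful bookkeeping.
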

\begin{proof}
    The proof is due to \cite{muller2022error} for a slightly different setting. With coercivity \eqref{eq:coercivity_assumption} and \eqref{eq:energy_scaling}, we have 
    \begin{align*}
        ||| u_{\theta^*} - u^* |||^2_X 
        &\leq
        \frac{1}{\alpha} a(u_{\theta^*} - u^*, u_{\theta^*} - u^*)
        \\
        & = 
        \frac{2}{\alpha}(E(u_{\theta^*}) - E(u^*))
        \\
        &=
        \frac{2}{\alpha}(E(u_{\theta^*}) - \inf_{u_\psi\in \mathcal F_\Theta }E(u_\psi)) 
        +
        \frac{2}{\alpha}\inf_{u_\psi\in \mathcal F_\Theta }\left[ E(u_\psi) - E(u^*) \right]
        \\
        &\leq
        \frac{2\delta}{\alpha} + \frac{ \|T \|^2}{\alpha}\inf_{u_\psi\in \mathcal F_\Theta }\|u_\psi - u^*\|^2_X. 
    \end{align*}
    To obtain the last bound above, we write $2(E(u_\psi) - E(u^*)) = a(u_\psi - u^*, u_\psi - u^*)$,  and we use \eqref{eq:a_bounded}. To show \eqref{eq:explicit_bound_delta}, 
we write 
\begin{equation}
     \label{eq:expanding_delta}
     \delta(u_{\theta^*}) 
     = 
     (E(u_{\theta^*}) - L(\theta^*)) 
     +
     (L(\theta^*) - \inf_{\theta \in \Theta} L(\theta) ) 
     \\  
     + (\inf_{\theta \in \Theta} L(\theta) 
     -
     \inf_{u_\theta \in 
    \mathcal{F}_\Theta} E(u_\theta)). 
\end{equation}
Noting that 
the first and third terms in \eqref{eq:expanding_delta} can be bounded by $\sup_{\theta \in \Theta }| L(\theta) - E(u_\theta) | $  yields 
\eqref{eq:explicit_bound_delta}.   
\end{proof}
\subsection{Approximation results} Here, we briefly present bounds on the infimum term in \eqref{eq:main_thm_estimate} in useful special cases. 
\begin{theorem}\label{thm:quant_approx}
	    Let $\Omega \subset \mathbb{R}^d$, $d\in \mathbb{N}$, be a bounded Lipschitz domain. Moreover, let $k , m \in\mathbb{N}_0$ with $k>m$. Then, for every $n\in\mathbb{N}$ and every $u\in H^{k}(\Omega)$, there exists a fully-connected $\tanh$-network $u_{\theta_n}\in H^{m}(\Omega)$ with parameter space $\Theta_n$ of dimension $\mathcal{O}(n)$ such that it holds
	    \begin{equation}\label{eq:estimate_sobolev}
	        \lVert u - u_{\theta_n} \rVert_{H^m(\Omega)} \lesssim \, \left( \frac1n \right)^{\frac{k-m-\mu}{d}}\lVert u \rVert_{H^{k}(\Omega)},
	    \end{equation}
     where $\mu>0$ can be arbitrarily small.
\end{theorem}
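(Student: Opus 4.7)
The strategy is to reduce the problem to a known approximation result for $\tanh$-networks on a cube. First I would invoke a Stein-type extension operator $E: H^k(\Omega) \to H^k(\mathbb{R}^d)$ (which exists since $\Omega$ is Lipschitz) to produce $Eu \in H^k(\mathbb{R}^d)$ with $\|Eu\|_{H^k(\mathbb{R}^d)} \lesssim \|u\|_{H^k(\Omega)}$. After multiplying by a smooth cutoff supported in a bounding box $Q \supset \Omega$, the problem reduces to approximating a compactly supported $H^k$-function on $Q$ by a $\tanh$-network in the $H^m(Q)$-norm, with the final bound obtained by restricting back to $\Omega$ and using $\|\cdot\|_{H^m(\Omega)} \leq \|\cdot\|_{H^m(Q)}$.

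Next, I would apply an existing quantitative approximation theorem for $\tanh$-networks in Sobolev norms, of the type established by De Ryck--Lanthaler--Mishra. For $w \in H^k(Q)$ and any network-width parameter $N$, such results provide a two-layer (or deep) $\tanh$-network $w_N$ satisfying
\begin{equation*}
\|w - w_N\|_{H^m(Q)} \lesssim N^{-(k-m)/d}\,(\log N)^{\beta}\,\|w\|_{H^k(Q)},
\end{equation*}
for an exponent $\beta = \beta(k,m,d)$, where the total number of parameters is $\mathcal{O}(N)$ (up to polylogarithmic factors). Setting $w = (Eu) \cdot \chi$ with $\chi$ a fixed smooth cutoff, and noting that $(Eu)\chi$ has $H^k$-norm controlled by $\|u\|_{H^k(\Omega)}$, we obtain the same bound with $u$ on the right-hand side.

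To absorb the logarithmic factor and the mismatch between total parameter count and $N$ into the arbitrarily small $\mu$, I would choose $n = N \cdot (\log N)^{\gamma}$ for a suitable $\gamma$, and note that $(\log N)^{\beta} \lesssim_\mu N^{\mu}$ for every $\mu > 0$ (with the implied constant blowing up as $\mu \to 0$, which is harmless). Equivalently, one has $(\log n)^{\beta}/n^{(k-m)/d} \lesssim 1/n^{(k-m-\mu)/d}$, which gives precisely the rate \eqref{eq:estimate_sobolev} with parameter space of dimension $\mathcal{O}(n)$.

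The main obstacle, and the only non-bookkeeping part, is ensuring that the cited $\tanh$-approximation theorem applies at the required Sobolev regularity $H^m$ (not merely $L^2$ or $L^\infty$) and for general integer $m < k$; this is exactly where the logarithmic factors enter, and where the small loss $\mu$ becomes necessary. Everything else (extension, cutoff, restriction, parameter counting) is routine.
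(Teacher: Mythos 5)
Your proposal is essentially the paper's approach: the paper offers no argument of its own but simply invokes \cite[Theorem 4.9]{GR20}, and your extension--cutoff--restriction reduction plus absorbing logarithms into $\mu$ is exactly the standard bookkeeping by which such a cube-based result is transferred to a bounded Lipschitz domain. One caution on the black box itself: the De Ryck--Lanthaler--Mishra tanh results are formulated in the sup-norm scale (data in $W^{s,\infty}$, error in $W^{k,\infty}$), so they do not directly yield an $H^m$-error bound in terms of $\lVert u\rVert_{H^k}$ without losing roughly $d/2$ derivatives through Sobolev embedding, which would ruin the rate; the result you actually need is the $W^{k,p}$-scale theorem of G\"uhring--Raslan cited by the paper, which already contains the arbitrarily small exponent loss $\mu$, making your logarithm-absorption step unnecessary. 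With that citation corrected, the argument is sound.
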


In case of Barron regularity, the dependence on the dimension in the above rate can be removed. The spectral Barron norm is defined as a weighted $L^1$ norm of the Fourier transform, more precisely 
\begin{equation*}
    \mathcal B^m(\Omega) 
    =
    \left\{
    u\in L^2(\Omega) \mid \inf_{u_{E|\Omega = u}}\int_{\mathbb R^d}(1 + |\omega|)^s |\hat u_E|\,\mathrm d\omega < \infty
    \right\}
\end{equation*}
where $\hat u$ denotes the Fourier transform of a function $u$, and $u_E$ denotes the extension of a $L^2(\Omega)$ function to all of $\mathbb R^d$, see \cite{siegel2020approximation}. 

\begin{theorem}\label{thm:quant_approx_barron}
	    Let $\Omega \subset \mathbb{R}^d$, $d\in \mathbb{N}$, be a bounded Lipschitz domain. Let $m,n \in\mathbb{N}$. Denote by $\mathcal F_{\Theta_n}$ the set of shallow neural networks of width $n$ with $\tanh$ activation function. Then it holds
	    \begin{equation} \label{eq:estimate_barron}
	    \inf_{u_\theta \in \mathcal{F}_{\Theta_n}}    \lVert u - u_{\theta} \rVert_{H^m(\Omega)} \leq  C(\mathrm{diam}(\Omega), m) |\Omega|^{1/2} \,\,\left(\frac{1}{n}\right)^{\frac12} \lVert u \rVert_{\mathcal{B}^{m+1}(\Omega)}.
  	    \end{equation}
\end{theorem}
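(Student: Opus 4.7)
The proof follows the classical Barron/Maurey template, adapted to $H^m(\Omega)$ and to the $\tanh$ activation. I would (i) represent $u$ via its Fourier transform as a continuous superposition of cosine ridge functions, (ii) replace each normalized ridge function by a single $\tanh$ neuron, absorbing the approximation cost into the extra factor $(1+|\omega|)$ present in $\mathcal B^{m+1}$, and (iii) discretize the resulting superposition by a probabilistic sampling argument in the Hilbert space $H^m(\Omega)$ to obtain the $n^{-1/2}$ rate.

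For step (i), choose an extension $u_E$ of $u$ to $\mathbb R^d$ realizing (up to a factor of $2$) the infimum in $\|u\|_{\mathcal B^{m+1}(\Omega)}$, and apply Fourier inversion to write, pointwise on $\Omega$, $u(x)=\int_{\mathbb R^d}\cos(\omega\cdot x+\varphi(\omega))\,|\hat u_E(\omega)|\,d\omega$, where $\varphi(\omega)$ is the phase of $\hat u_E(\omega)$. Introduce the probability measure $d\mu(\omega)=Z^{-1}(1+|\omega|)^{m+1}|\hat u_E(\omega)|\,d\omega$ with total mass normalization $Z\le 2\|u\|_{\mathcal B^{m+1}(\Omega)}$, so that $u(x)=Z\int g_\omega(x)\,d\mu(\omega)$ with the normalized ridge function $g_\omega(x)=(1+|\omega|)^{-(m+1)}\cos(\omega\cdot x+\varphi(\omega))$. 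Differentiating the cosine up to order $m$ contributes at most $(1+|\omega|)^m$, hence $\|g_\omega\|_{H^m(\Omega)}\le C(\operatorname{diam}(\Omega),m)\,|\Omega|^{1/2}(1+|\omega|)^{-1}$, which leaves one spare factor $(1+|\omega|)^{-1}$ for later use.

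For step (ii), I would approximate each $g_\omega$ by a single $\tanh$ neuron $h_\omega(x)=\alpha_\omega\tanh(a_\omega\cdot x+b_\omega)+\beta_\omega$ with
\[
\|g_\omega-h_\omega\|_{H^m(\Omega)}+\|h_\omega\|_{H^m(\Omega)}\le C(\operatorname{diam}(\Omega),m)\,|\Omega|^{1/2},
\]
uniformly in $\omega$, consuming exactly the spare factor $(1+|\omega|)^{-1}$ to pay for the ridge-function approximation of $\cos$ by $\tanh$. This step is the technical heart and the main obstacle. It can be carried out either by a direct one-dimensional construction on the ridge variable (exploiting the analyticity of $\tanh$ to Taylor- or Chebyshev-approximate $\cos$ on the bounded interval $\omega\cdot\Omega$, with the approximation error scaling like $(1+|\omega|)^{-1}$ times the $m$-th derivative cost) or via a secondary integral representation of $\cos$ as a superposition of $\tanh$-ridges followed by a nested Maurey argument.

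For step (iii), draw $\omega_1,\dots,\omega_n$ i.i.d.\ from $\mu$ and set $u_\theta(x)=\frac{Z}{n}\sum_{i=1}^n h_{\omega_i}(x)$, which is a shallow $\tanh$ network of width $n$, i.e.\ an element of $\mathcal F_{\Theta_n}$. A standard variance computation in the Hilbert space $H^m(\Omega)$ (Maurey's lemma) gives
\[
\mathbb E\,\|u-u_\theta\|_{H^m(\Omega)}^2 \le \frac{Z^2}{n}\,\sup_{\omega}\|h_\omega\|_{H^m(\Omega)}^2 \le \frac{C(\operatorname{diam}(\Omega),m)^2\,|\Omega|}{n}\,\|u\|_{\mathcal B^{m+1}(\Omega)}^2,
\]
so at least one realization of the samples achieves \eqref{eq:estimate_barron}. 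Once the step-(ii) lemma is in place, the remainder is the standard probabilistic Barron recipe; the entire difficulty is concentrated in producing the uniform-in-$\omega$ $\tanh$-neuron approximation of cosine ridges in $H^m(\Omega)$, paid for by the single extra factor $(1+|\omega|)$ afforded by $\mathcal B^{m+1}$-regularity.
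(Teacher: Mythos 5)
Your step (iii) misquotes Maurey's lemma, and this is where the argument breaks. The sampling bound controls the deviation of $u_\theta=\tfrac{Z}{n}\sum_i h_{\omega_i}$ from its \emph{expectation} $Z\int h_\omega\,\mathrm d\mu(\omega)$, not from $u=Z\int g_\omega\,\mathrm d\mu(\omega)$. Hence
$\mathbb E\,\|u-u_\theta\|^2_{H^m(\Omega)}=\bigl\|Z\int (g_\omega-h_\omega)\,\mathrm d\mu\bigr\|^2_{H^m(\Omega)}+\mathbb E\,\bigl\|Z\int h_\omega\,\mathrm d\mu-u_\theta\bigr\|^2_{H^m(\Omega)}$,
and only the second term is $O(Z^2/n)$. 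The bias term is independent of $n$, and the requirement you impose in step (ii), $\|g_\omega-h_\omega\|_{H^m(\Omega)}+\|h_\omega\|_{H^m(\Omega)}\le C|\Omega|^{1/2}$ uniformly in $\omega$, is too weak to control it: it is already satisfied by $h_\omega=0$, in which case $u_\theta\equiv 0$ and no approximation happens. To make the scheme work you would need $\sup_\omega\|g_\omega-h_\omega\|_{H^m(\Omega)}\lesssim n^{-1/2}$, and a single $\tanh$ neuron cannot deliver smallness even in $L^2(\Omega)$: along the ridge variable $t=\omega\cdot x$ it is a monotone function of $t$ plus a constant, while $\cos$ performs on the order of $|\omega|\operatorname{diam}(\Omega)$ oscillations over $\omega\cdot\Omega$, so for frequencies of moderate size the best single-neuron error stays comparable to $\|g_\omega\|_{L^2(\Omega)}$ itself. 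The spare factor $(1+|\omega|)^{-1}$ buys uniform boundedness, not decay, so the proposal proves at best an error of the form (fixed bias)$\,+\,CZn^{-1/2}$, which is not \eqref{eq:estimate_barron}.

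The missing idea is the one you relegate to a parenthesis: you must show that (a rescaling of) $u$ lies in the closed convex hull of a \emph{bounded dictionary} of $\tanh$ neurons, i.e.\ each normalized cosine ridge $g_\omega$ must itself be represented exactly (or as a limit of convex combinations) as a superposition of shifted and dilated $\tanh$ units whose $H^m(\Omega)$ norms are bounded by $C(\operatorname{diam}(\Omega),m)|\Omega|^{1/2}$, with coefficient mass controlled by the factor $(1+|\omega|)^{m+1}$ — this is precisely where that weight is spent, since $m$ derivatives of the dilated units must be kept bounded. Only after such a representation is in hand does Maurey sampling in $H^m(\Omega)$ give the $n^{-1/2}$ rate; your ``nested Maurey'' aside is the actual heart of the proof and is not carried out. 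For comparison, the paper does not prove this statement at all: it cites Corollary 1 of Siegel and Xu, whose argument is exactly of this dictionary/convex-hull type, so your outline is not an alternative route to the paper's but an incomplete reconstruction of the cited proof with its central lemma absent.
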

\begin{remark}
Theorem~\ref{thm:quant_approx} and Theorem~\ref{thm:quant_approx_barron} are special cases of \cite[Theorem 4.9]{GR20} and \cite[Corollary 1]{siegel2020approximation} respectively. The results hold for most commonly used activation functions, we refer the reader to Table 1 in \cite{GR20} and the Table below \cite[Corollary 1]{siegel2020approximation}.
\end{remark}

\begin{remark}
    Theorem~\ref{thm:quant_approx_barron}  is one example of the approximation theory in Barron spaces, see \cite{siegel2022high} for improved rates for $\mathrm{ReLU}^k$ activation functions. There are some results on the relation between Barron and 
    Sobolev spaces, see for instance \cite[Lemma 2.5]{xu2020finite}.  
 \end{remark}
\section{Applications of the theory}\label{sec:applications} In this section, we consider several applications of the above abstract setting. Namely, we prove error estimates for PINN formulations of the Poisson, Darcy, elasticity, Stokes,  parabolic, and hyperbolic equations. Here, we only explicitly present a priori estimates of the type \eqref{eq:main_thm_estimate}. A posteriori estimates follow immediately from the coercivity results that we prove, see Theorem~\ref{thm:aposteriori_thm}. We omit their write up for brevity.

\subsection{Poisson's Equation} We consider the following boundary value problem 
\begin{subequations}
\begin{alignat}{2}
    - \nabla \cdot (A \nabla u) & = f, && \quad  \mathrm{in} \,\, \Omega \label{eq:pimal_poisson_1} \\ 
    u & = g_D,  && \quad  \mathrm{on} \,\, \partial \Omega.  \label{eq:pimal_poisson_2}
\end{alignat}
\end{subequations}
We assume that $f \in L^2(\Omega)$, $g_D \in H^{3/2}(\partial \Omega)$, and  $A$ is an elliptic and smooth  tensor with ellipticity constant $a_0 >0$. Furhter, we assume $\partial \Omega \in C^{1,1}$.  It therefore follows that $u \in H^2(\Omega)$.  In this case, the residual energy $E: H^2(\Omega) \to \R$ is defined as: 
\begin{equation}
     E(u) = \frac12 \|\nabla \cdot (A \nabla u) - f \|^2_{L^2(\Omega)} \\ + \frac{1}{2} \|u - g_D\|^2_{L^2(\partial \Omega)}. \label{eq:E_poisson}
\end{equation}
The form $a$, given in \eqref{eq:form_a_general}, reads in this setting as $a :  H^2(\Omega) \times H^2(\Omega) \to \R$:
\begin{equation}
    a(u,v) = (\nabla \cdot (A \nabla u) , \nabla \cdot (A \nabla v) )_{\Omega} +  (u,v)_{\partial \Omega}.   \label{eq:a_poisson}
\end{equation}
The global bound in Theorem \ref{thm:error_estimate_poisson} below   is presented in \cite{muller2022notes, shin2020convergence}. Here,  we provide a proof of the coercivity estimate used in \cite{muller2022notes,shin2020convergence}, and we show error estimates with additional observations.
\begin{lemma}[Coercivity] \label{lemma:coercivity_poisson}
The form $a$ given in \eqref{eq:a_poisson} is coercive with respect to the $H^{1/2}(\Omega)$ norm. There exists a constant $C_1 > 0 $ such that 
\begin{align}
    C_1 \|u\|_{H^{1/2}(\Omega)}^2 \leq   a(u,u). \label{eq:coerc_global_poisson}
\end{align}
In addition, for any set $U$ compactly supported in $\Omega$, there holds for $C_2 >0 $ 
\begin{equation}
   C_2  \|u\|^2_{H^2(U)} \leq a(u,u).  \label{eq:coerc_interior_poisson}
\end{equation}
\end{lemma}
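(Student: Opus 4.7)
The plan is to prove the global estimate \eqref{eq:coerc_global_poisson} by interpolating between two endpoint mapping properties of the Poisson solution operator, and to deduce the interior estimate \eqref{eq:coerc_interior_poisson} from classical interior $H^2$ regularity combined with the global bound. Let $S:(f,g)\mapsto u$ denote the solution operator of $-\nabla\cdot(A\nabla u)=f$ in $\Omega$, $u=g$ on $\partial\Omega$. Since every $u\in H^2(\Omega)$ satisfies $u=S(-\nabla\cdot(A\nabla u),\,u|_{\partial\Omega})$, the estimate \eqref{eq:coerc_global_poisson} is equivalent to the boundedness of $S$ as a map $L^2(\Omega)\times L^2(\partial\Omega)\to H^{1/2}(\Omega)$.

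The two endpoints I would establish are the following. First, for $g\in H^{1/2}(\partial\Omega)$, lift $g$ to $\tilde g\in H^1(\Omega)$ via the trace theorem and apply Lax--Milgram to $w=u-\tilde g\in H^1_0(\Omega)$; this yields $S:L^2(\Omega)\times H^{1/2}(\partial\Omega)\to H^1(\Omega)$. Second, for $(f,g)\in L^2(\Omega)\times H^{1/2}(\partial\Omega)^*$, I would define $u$ as the very-weak solution by testing against the adjoint: for any $\psi\in L^2(\Omega)$ let $\varphi\in H^2(\Omega)\cap H^1_0(\Omega)$ solve $-\nabla\cdot(A\nabla\varphi)=\psi$ and set
\[
\int_\Omega u\,\psi\,\mathrm dx \;=\; \int_\Omega f\,\varphi\,\mathrm dx \;-\; \langle g,\,(A\nabla\varphi)\cdot n\rangle_{\partial\Omega}.
\]
Since $\partial\Omega\in C^{1,1}$ and $A$ is smooth, elliptic regularity gives $\|\varphi\|_{H^2(\Omega)}\lesssim \|\psi\|_{L^2(\Omega)}$ and the trace theorem gives $\|(A\nabla\varphi)\cdot n\|_{H^{1/2}(\partial\Omega)}\lesssim \|\varphi\|_{H^2(\Omega)}$. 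The displayed identity then yields $S:L^2(\Omega)\times H^{1/2}(\partial\Omega)^*\to L^2(\Omega)$.

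Applying \eqref{eq:operator_interpolation} with $\theta=1/2$ (and using that interpolation commutes with Cartesian products) together with the identifications $[H^1(\Omega),L^2(\Omega)]_{1/2,2}=H^{1/2}(\Omega)$ from \eqref{eq:interp_Hilbert} and $[H^{1/2}(\partial\Omega),H^{1/2}(\partial\Omega)^*]_{1/2,2}=L^2(\partial\Omega)$ from \eqref{eq:interpolation_dual}, I obtain $S:L^2(\Omega)\times L^2(\partial\Omega)\to H^{1/2}(\Omega)$, which is \eqref{eq:coerc_global_poisson}. For \eqref{eq:coerc_interior_poisson}, I would invoke classical interior $H^2$ regularity $\|u\|_{H^2(U)}\lesssim \|\nabla\cdot(A\nabla u)\|_{L^2(\Omega)}+\|u\|_{L^2(\Omega)}$ for $U\subset\subset\Omega$ and bound the last term by $\|u\|_{H^{1/2}(\Omega)}\lesssim a(u,u)^{1/2}$ using the global estimate just established.

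The main obstacle I anticipate is the very-weak $L^2$ endpoint: it requires identifying the correct dual space for the boundary datum (namely $H^{1/2}(\partial\Omega)^*$ with respect to $L^2$ duality) and exploiting that the conormal derivative of the adjoint test function lies in exactly $H^{1/2}(\partial\Omega)$. The $C^{1,1}$ regularity of $\partial\Omega$ and the smoothness of $A$ are precisely what make the endpoint sharp; relaxing either would lower the coercivity exponent below $1/2$.
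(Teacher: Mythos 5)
Your proposal is correct and follows essentially the same strategy as the paper: an $H^1$ energy estimate via Lax--Milgram with a trace lifting, an $L^2$ estimate by transposition against the $H^2(\Omega)\cap H^1_0(\Omega)$-regular adjoint problem, interpolation of the solution operator at $\theta=1/2$ using \eqref{eq:operator_interpolation}, \eqref{eq:interp_Hilbert} and \eqref{eq:interpolation_dual}, and interior $H^2$ regularity combined with the global bound for \eqref{eq:coerc_interior_poisson}. The only (harmless) deviation is that you keep the interior datum in $L^2(\Omega)$ at both endpoints, whereas the paper also weakens it to the dual norms $H^1_0(\Omega)^*$ and $(H^2(\Omega)\cap H^1_0(\Omega))^*$ and then invokes \eqref{eq:interp_dual_zero_trace} before passing back to $L^2(\Omega)$; both yield the same conclusion.
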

\begin{proof}
Estimate \eqref{eq:coerc_global_poisson} results from regularity theory and can be found in \cite{schechter1963p,bramble1970rayleigh}. Here, we outline the proof since the strategy will be useful later. 
From standard regularity theory, we have
\begin{equation}
    \|u\|_{H^2(\Omega)} \lesssim \|\nabla \cdot (A\nabla u )\|_{L^2(\Omega)} + \|u\|_{H^{3/2}(\partial \Omega)}. 
\end{equation}
From this estimate, duality arguments are employed (see Appendix \ref{sec:poisson_negative} for details)  to show that 
\begin{equation}
\|u\|_{L^2(\Omega)} \lesssim \|\nabla \cdot (A \nabla u )\|_{(H^{2}(\Omega) \cap H_0^1(\Omega))^*} + \|u\|_{H^{1/2}(\partial \Omega)^*}. 
\end{equation}
In addition, from standard applications of Lax-Milgram for the weak formulation of a corresponding  lifted problem, see for e.g \cite[Proposition 2.10] {ern2004theory}, we obtain that 
\begin{equation}
\|u\|_{H^1(\Omega)} \lesssim \|\nabla \cdot (A \nabla u) \|_{H_0^1(\Omega)^*} + \|u\|_{H^{1/2}(\partial \Omega)}.     
\end{equation}
By interpolation theory, namely by \eqref{eq:operator_interpolation}, \eqref{eq:interp_Hilbert}, 
\eqref{eq:interp_dual_zero_avg}, and \eqref{eq:interpolation_dual}, we obtain that  
 for all $u \in H^2(\Omega)$, 
\begin{multline}
\|u\|^2_{H^{1/2}(\Omega)} \lesssim (\|\nabla \cdot(A \nabla u)\|^2_{(H^{3/2}(\Omega) \cap H^1_0(\Omega))^*} +\|u\|^2_{L^2(\partial \Omega)}) \\ \lesssim ( \|\nabla \cdot(A \nabla u)\|^2_{L^2(\Omega)} +\|u\|^2_{L^2(\partial \Omega)}) = C_{\mathrm{reg}} a(u,u).  \label{eq:first_coercivity}
\end{multline}
For \eqref{eq:coerc_interior_poisson}, we use interior regularity estimates, see e.g \cite[Section 6.3.1]{evans2022partial}. 
\begin{equation}
  \|u\|^2_{H^2(U)} \lesssim \|\nabla \cdot (A \nabla u)  \|^2_{L^2(\Omega)} + \|u\|^2_{L^2(\Omega)}  
\lesssim\|\nabla \cdot (A \nabla u)  \|^2_{L^2(\Omega)} + \|u\|^2_{H^{1/2}(\Omega)}    \lesssim  a(u,u). 
\end{equation}
In the above, we used \eqref{eq:first_coercivity}.
\end{proof}
\begin{remark}
The coercivity estimate in \eqref{eq:coerc_global_poisson} is sharp in the sense that 
$$\|u\|_{H^s(\Omega)}^2 \lesssim a(u,u) \,\,\, \mathrm{iff} \,\,\,  s \leq 1/2.$$  The proof follows by contradiction and by defining a harmonic extension of functions in $H^{3/2}(\partial \Omega)$, see \cite[Theorem 8 and eq. 8]{muller2022notes} for more details. 
\end{remark}
\begin{theorem}(Error Estimates) \label{thm:error_estimate_poisson}
   Let $\theta^*$ inexactly  
minimize $L(\theta)$ with $L(\theta) \approx E(u_{\theta})$ where $E$ is given in \eqref{eq:E_poisson}. Let  $u_{\theta^*}$ be the corresponding neural network.  Further let $U$ be any set compactly supported in $\Omega$. 
Then,  we have that  
\begin{equation}
 \|u_{\theta^*} - u\|^2_{H^{1/2}(\Omega)} +    \|u_{\theta^*} - u\|^2_{H^{2}(U)} \lesssim   L(\theta^*) + \eta(\theta^*),  \label{eq:second_estimate_poisson}
\end{equation}
where $\eta(\theta^*) = E(u_\theta^*) - L(\theta^*)$ is a quadrature error. Further,
\begin{equation}
\|u_{\theta^*} - u\|^2_{H^{1/2}(\Omega)} + \|u_{\theta^*} - u\|^2_{H^{2}(U)}   \lesssim \delta(u_{\theta^*}) + \inf_{u_\theta \in \mathcal{F}_\Theta} \|u_{\theta} - u\|^2_{H^2(\Omega)},  \label{eq:first_estimate_poisson}
\end{equation}
where $\delta(u_{\theta^*})$ is given in \eqref{eq:delta_opt}.  
\end{theorem}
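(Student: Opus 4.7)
The plan is to deduce both estimates directly from the abstract framework in Section~\ref{sec:abstract_framework}, using the coercivity already proved in Lemma~\ref{lemma:coercivity_poisson} together with a short continuity check. I would identify the setting with $X = H^2(\Omega)$, $Y = L^2(\Omega) \times L^2(\partial\Omega)$, and $T u = (-\nabla \cdot (A\nabla u), u|_{\partial\Omega})$, so that the energy $E$ in Setting~\ref{setting:linear_equations} agrees with \eqref{eq:E_poisson} (up to a sign inside the squared $L^2$ norm, which is immaterial) and the form $a$ agrees with \eqref{eq:a_poisson}.

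For the coercivity assumption (A1), I would combine the two bounds of Lemma~\ref{lemma:coercivity_poisson} by defining the seminorm
\begin{equation*}
|||u|||_X^2 \;:=\; \|u\|_{H^{1/2}(\Omega)}^2 + \|u\|_{H^2(U)}^2,
\end{equation*}
which yields $\alpha\, |||u|||_X^2 \le a(u,u)$ with $\alpha = \tfrac{1}{2}\min(C_1, C_2)$, simply by adding \eqref{eq:coerc_global_poisson} and \eqref{eq:coerc_interior_poisson} and dividing by two. This is exactly the hypothesis of Theorem~\ref{thm:aposteriori_thm}, which then produces \eqref{eq:second_estimate_poisson} as an immediate specialisation of \eqref{eq:aposteriori_general}.

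For the a priori bound I additionally need continuity (A2). Since $A$ is smooth, expanding $\nabla \cdot (A \nabla u)$ and bounding each term gives $\|\nabla\cdot(A \nabla u)\|_{L^2(\Omega)} \lesssim \|u\|_{H^2(\Omega)}$, while the trace theorem gives $\|u\|_{L^2(\partial\Omega)} \lesssim \|u\|_{H^{1/2}(\partial\Omega)} \lesssim \|u\|_{H^1(\Omega)} \lesssim \|u\|_{H^2(\Omega)}$. Hence $T$ is bounded from $X$ into $Y$, verifying (A2). Plugging this into Theorem~\ref{thm:cea} gives
\begin{equation*}
|||u_{\theta^*} - u|||_X^2 \;\lesssim\; \delta(u_{\theta^*}) + \inf_{u_\psi \in \mathcal F_\Theta}\|u_\psi - u\|_{H^2(\Omega)}^2,
\end{equation*}
which is precisely \eqref{eq:first_estimate_poisson} after expanding $|||\cdot|||_X$.

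I do not expect any real obstacles: the analytical content is entirely contained in Lemma~\ref{lemma:coercivity_poisson}, and the remaining steps are a clean verification of the hypotheses of the abstract theorems. The only point requiring mild care is the introduction of the combined seminorm $|||\cdot|||_X$, which should be flagged explicitly so that the reader sees how both the global $H^{1/2}(\Omega)$ and the interior $H^2(U)$ contributions arise simultaneously on the left-hand sides of \eqref{eq:second_estimate_poisson}--\eqref{eq:first_estimate_poisson}.
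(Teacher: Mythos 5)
Your proposal is correct and follows essentially the same route as the paper: verify coercivity via Lemma~\ref{lemma:coercivity_poisson} and continuity of $a$ on $H^2(\Omega)$, then invoke Theorems~\ref{thm:aposteriori_thm} and \ref{thm:cea}. The only (cosmetic) difference is that you bundle the two coercivity bounds into a single combined seminorm $|||\cdot|||_X$ and apply the abstract theorems once, whereas the paper applies them twice, once with $\|\cdot\|_{H^{1/2}(\Omega)}$ and once with $\|\cdot\|_{H^{2}(U)}$, and sums the resulting estimates.
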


\begin{proof}
The results follows from applications of Theorems~\ref{thm:aposteriori_thm} and \ref{thm:cea}. We verify the  continuity of $a$ on $X =H^2(\Omega)$ which easily follows from the uniform boundedness of $A$ and from trace estimates. Indeed, we have that 
\begin{equation}
a(u,v) \leq 2(\|A\|^2 + C_{\mathrm{tr}}^2)\|u\|_{H^2(\Omega)} \|v\|_{H^2(\Omega)}. 
\end{equation}
Here, the norm $\|A\| = \max_{i,j} \|A_{i,j}\|_{C^1(\Omega)}$. To show \eqref{eq:first_estimate_poisson}, we apply Theorem \ref{thm:cea} with $|||\cdot |||_{X} = \|\cdot\|_{H^{1/2}(\Omega)}$ where  the  coercivity estimate \eqref{eq:coerc_global_poisson} is used  and with  $|||\cdot |||_{X} = \|\cdot\|_{H^{2}(U)}$ where we use the  coercivity estimate \eqref{eq:coerc_interior_poisson}. 
To show \eqref{eq:second_estimate_poisson}, similar applications to Therorem \ref{thm:aposteriori_thm}  are utilized. 
\end{proof}
\begin{remark}[Exact Boundary Values] If we assume that our ansatz class $\mathcal F_\Theta$ satisfies the boundary values exactly, i.e., it holds $\mathcal F_\Theta \subset H^2(\Omega) \cap H^1_{g_D}(\Omega)$ the forms $E$ and $a$ given in \eqref{eq:E_poisson} and \eqref{eq:a_poisson} are modified to  
\[ 
    E_0(u) =\frac12 \|\nabla \cdot (A \nabla u) - f \|^2_{L^2(\Omega)},  \quad a_0(u,v) = (\nabla \cdot (A \nabla u) , \nabla \cdot (A \nabla v) )_{\Omega}.
\]
From regularity theory, it immediately follows that the form $a_0$ is coercive with respect to the $H^2$--norm over $H^2(\Omega) \cap H^1_0(\Omega)$. Thus,  error estimates in the $H^2$--norm can be directly deduced from Theorems~\ref{thm:aposteriori_thm} and  ~\ref{thm:cea}. Standard neural networks do not satisfy the inclusion $\mathcal F_\Theta \subset H^2(\Omega) \cap H^1_{g_D}(\Omega)$, however for certain domains it is possible to adapt the network architecture to conform to the boundary conditions, we refer to \cite{sukumar2022exact}.
\end{remark}

\subsection{Darcy's Equations} 
Here, we consider  Darcy's problem for the unknown pressure $p$ and velocity $\bm \sigma$: 
\begin{alignat}{2} \label{eq:Darcy_0}
    A^{-1} \bm \sigma + \nabla p &= \bm{f},  &&  \quad \mathrm{in} \,\, \Omega \\ 
    \nabla \cdot \bm \sigma & = g, && \quad \mathrm{in} \,\, \Omega \\ 
    p & = g_D, && \quad \mathrm{on} \,\, \partial \Omega.  \label{eq:Darcy_end}
\end{alignat}
In the above, $A$ is the symmetric permeability tensor with positive and uniformly bounded eigenvalues in $\Omega$. We also assume that $\bm{f} \in L^2(\Omega)^d$,  $g \in L^2(\Omega)$ and $g_D \in H^{3/2}(\partial \Omega)$. The associated least-squares energy reads as follows. 
\begin{equation} \label{eq:energy_darcy}
    E( \bm{\sigma}, p ) = \frac{1}{2} \|A^{-1} \bm \sigma + \nabla p - \bm{f}\|_{L^2(\Omega)}^2 +  \frac12 \|\nabla \cdot \bm \sigma  - g\|_{L^2(\Omega)}^2 + \frac12\|p - g_{D}\|_{L^2(\partial \Omega) }^2. 
\end{equation}
The PINNs formulation is to inexactly solve: 
\begin{equation}
\min_{(\theta_1, \theta_2) \in \Theta_1 \times 
\Theta_2 } L(\theta_1, \theta_2), \quad L(\theta_1, \theta_2) \approx E((\bm \sigma_{\theta_1}, p_{\theta_2})). 
\end{equation}
Here too, the method of collocation or Monte Carlo can be employed to approximate the norms in $E$ and obtain $L$. 
The specific form $a$ now reads as $a: (H(\mathrm{div}; \Omega) \times  H^2(\Omega))^2  \rightarrow \R$: 
\begin{equation} \label{eq:a_darcy}
a((\bm{\sigma},p), (\bm{\tau},q)) = 
(A^{-1} \bm{\sigma} + \nabla p, A^{-1} \bm{\tau} + \nabla q)_\Omega + (\nabla \cdot  \bm{\sigma}, \nabla \cdot \bm \tau)_{\Omega} + (p,q)_{\partial \Omega}.
\end{equation}
Here, $H(\mathrm{div}; \Omega)$ and its norm are given by \begin{align*}
    H(\mathrm{div}; \Omega) &= \{ \bm{w} \in L^2(\Omega)^d, \, \mathrm{div}(\bm{w}) \in L^2(\Omega) \}, 
    \\ \|\bm{w}\|_{H(\mathrm{div};\Omega)}^2 & =\|\bm{w}\|_{L^2(\Omega)}^2 + \|\mathrm{div}(\bm{w})\|_{L^2(\Omega)}^2.  
\end{align*} 

\begin{lemma}[Coercivity Estimate]\label{lemma:coercivity_darcy}
    Let  $A\in W^{1,\infty}(\Omega)$ be uniformly elliptic and let $\partial \Omega \in C^{1,1}$. Then for all $\sigma \in H^1(\Omega)^d$ and  $p\in H^2(\Omega)$, we may estimate
    \begin{equation}
    \|\bm\sigma \|^2_{H^{1/2}_{00}(\Omega)^* } +      \| p \|^2_{H^{1/2}(\Omega)} 
        \lesssim a((\bm{\sigma},p), (\bm{\sigma},p)), 
    \end{equation}
    where $H^{1/2}_{00}(\Omega) = [L^2(\Omega), H^1_0(\Omega)]_{1/2,2}$ is known as the Lions-Magenes space, see \cite[Section 2.1]{nochetto2015pde} and \cite[Theorem 11.7]{lions2012non}. 
\end{lemma}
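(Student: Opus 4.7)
The form factors as $a((\bm\sigma, p), (\bm\sigma, p)) = \|T(\bm\sigma, p)\|_Y^2$ with
$T(\bm\sigma, p) = (A^{-1}\bm\sigma + \nabla p,\, \nabla\cdot\bm\sigma,\, p|_{\partial\Omega}) =: (\bm f, g, g_D) \in Y := L^2(\Omega)^d \times L^2(\Omega) \times L^2(\partial\Omega)$. The plan is to bound $\|p\|_{H^{1/2}(\Omega)}^2$ and $\|\bm\sigma\|_{H^{1/2}_{00}(\Omega)^*}^2$ separately by $\|T(\bm\sigma,p)\|_Y^2$ and add them. The pressure part is a direct application of the Poisson result, while the flux part is obtained by interpolation between an energy and a dual estimate, mirroring the two-sided strategy of Lemma~\ref{lemma:coercivity_poisson}.

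For the pressure, observe that $p$ solves the primal Poisson problem $-\nabla\cdot(A\nabla p) = g - \nabla\cdot(A\bm f)$ with Dirichlet data $g_D$. The interpolated Poisson coercivity \eqref{eq:first_coercivity} yields $\|p\|_{H^{1/2}(\Omega)}^2 \lesssim \|\nabla\cdot(A\nabla p)\|_{(H^{3/2}(\Omega)\cap H^1_0(\Omega))^*}^2 + \|g_D\|_{L^2(\partial\Omega)}^2$. To bound the first term I test against $\varphi\in H^{3/2}(\Omega)\cap H^1_0(\Omega)$ and integrate by parts; the boundary contribution vanishes since $\varphi|_{\partial\Omega}=0$, and I obtain $|(g - \nabla\cdot(A\bm f),\varphi)| = |(g,\varphi) + (A\bm f,\nabla\varphi)| \lesssim (\|g\|_{L^2} + \|\bm f\|_{L^2})\|\varphi\|_{H^{3/2}(\Omega)}$, giving $\|p\|_{H^{1/2}(\Omega)}^2 \lesssim a((\bm\sigma,p),(\bm\sigma,p))$.

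For the flux, writing $\bm\sigma = A(\bm f - \nabla p)$ and invoking Lax--Milgram on the lifted Poisson problem for $p$ furnishes the energy bound $\|\bm\sigma\|_{L^2(\Omega)} \lesssim \|\bm f\|_{L^2} + \|g\|_{L^2} + \|g_D\|_{H^{1/2}(\partial\Omega)}$. For the dual bound I test $\bm\sigma$ against $\bm v\in H^1_0(\Omega)^d$ and transfer the derivative off $p$ (the boundary term vanishes since $\bm v|_{\partial\Omega}=0$), obtaining $(\bm\sigma,\bm v) = (A\bm f,\bm v) + (p,\nabla\cdot(A\bm v)) \lesssim (\|\bm f\|_{L^2} + \|p\|_{L^2})\|\bm v\|_{H^1}$. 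Combined with the Poisson dual estimate from Appendix~\ref{sec:poisson_negative}, specifically $\|p\|_{L^2} \lesssim \|\bm f\|_{L^2} + \|g\|_{H^1_0(\Omega)^*} + \|g_D\|_{H^{1/2}(\partial\Omega)^*}$ (derived by testing $\nabla\cdot(A\nabla p) = \nabla\cdot(A\bm f) - g$ against $H^2\cap H^1_0$), this produces $\|\bm\sigma\|_{H^1_0(\Omega)^*} \lesssim \|\bm f\|_{L^2} + \|g\|_{H^1_0(\Omega)^*} + \|g_D\|_{H^{1/2}(\partial\Omega)^*}$.

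Applying the operator interpolation \eqref{eq:operator_interpolation} at $\theta=1/2$ to the last two inequalities, using the commutation of duality and interpolation to identify $[L^2(\Omega), H^1_0(\Omega)^*]_{1/2,2} = H^{1/2}_{00}(\Omega)^*$ and \eqref{eq:interpolation_dual} for the boundary factor, gives $\|\bm\sigma\|_{H^{1/2}_{00}(\Omega)^*} \lesssim \|\bm f\|_{L^2} + \|g\|_{H^{1/2}_{00}(\Omega)^*} + \|g_D\|_{L^2(\partial\Omega)}$; the embedding $L^2 \hookrightarrow H^{1/2}_{00}(\Omega)^*$ then finishes the $\bm\sigma$-bound by $a$, and summing with the pressure bound completes the argument. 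The main obstacle I anticipate is the careful bookkeeping of interpolated spaces, in particular the identification of $[L^2, H^1_0^*]_{1/2,2}$ with the dual of the Lions--Magenes space and the product-compatibility of interpolation on the source side; once those are secured, the integration-by-parts manipulations are routine.
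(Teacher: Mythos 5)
Your argument is correct, and your pressure bound is essentially the paper's: both rewrite $\nabla\cdot(A\nabla p)$ in terms of the residuals and apply the interpolated Poisson coercivity \eqref{eq:first_coercivity}, moving the divergence onto the test function. Where you genuinely diverge is the flux. The paper never derives separate energy and dual a~priori estimates for $\bm\sigma$: it simply writes $\bm\sigma = \tilde{\bm f} - A\nabla p$ with $\tilde{\bm f} = \bm\sigma + A\nabla p$, and bounds $\|A\nabla p\|_{[H^1_0(\Omega)^*,L^2(\Omega)]_{1/2}} \lesssim \|p\|_{H^{1/2}(\Omega)}$ by interpolating the \emph{fixed} linear operator $p\mapsto A\nabla p$ between $L^2(\Omega)\to H^1_0(\Omega)^*$ (integration by parts, $A\in W^{1,\infty}$) and $H^1(\Omega)\to L^2(\Omega)$; the flux estimate is then a one-line consequence of the already-established pressure bound. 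You instead interpolate the Darcy data-to-flux solution map between the endpoint estimates $\|\bm\sigma\|_{L^2}\lesssim\|\bm f\|_{L^2}+\|g\|_{L^2}+\|g_D\|_{H^{1/2}(\partial\Omega)}$ and $\|\bm\sigma\|_{H^1_0(\Omega)^*}\lesssim\|\bm f\|_{L^2}+\|g\|_{H^1_0(\Omega)^*}+\|g_D\|_{H^{1/2}(\partial\Omega)^*}$, exactly mirroring the template of Lemma~\ref{lemma:coercivity_poisson}. Both routes reach the same norm $H^{1/2}_{00}(\Omega)^* = ([L^2(\Omega),H^1_0(\Omega)]_{1/2,2})^*$. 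What each buys: your version is more systematic (energy/dual/interpolate for every unknown) and yields as a by-product a genuine very-weak a~priori estimate for $\bm\sigma$ in terms of the data; the price is that applying \eqref{eq:operator_interpolation} to the solution operator tacitly requires that operator to be well defined and bounded on the rough endpoint data space $L^2\times H^1_0(\Omega)^*\times H^{1/2}(\partial\Omega)^*$ (transposition/very-weak solutions, or a density argument) — the same caveat implicit in the paper's Poisson lemma, so it meets the paper's standard of rigor, but it is machinery the paper's flux argument sidesteps entirely by only interpolating the concrete operator $p\mapsto A\nabla p$ and recycling the pressure bound.
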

\begin{proof}
    We introduce the notation $\tilde{\bm f}  = \bm \sigma + A \nabla p$,  $\tilde{ g}=\nabla\cdot\bm \sigma$, and $\tilde{g}_D = p \vert_{\partial \Omega}$. With noting that  $\nabla \cdot (A \nabla p ) = \nabla \cdot \tilde{\bm{f}} - \tilde g $ and employing  estimate \eqref{eq:first_coercivity},  it holds that
    \begin{align*}
        \| p \|_{H^{1/2}(\Omega)}
        &
        \lesssim
        \| \tilde{ g} \|_{(H^{3/2}(\Omega) \cap H^1_0(\Omega))^*} 
        +
        \|\nabla\cdot \tilde{\bm f} \|_{(H^{3/2}(\Omega) \cap H^1_0(\Omega))^*}
        +
        \|\tilde g_D\|_{L^2(\partial\Omega)}
        \\
        &\lesssim
        \|\tilde{ g}\|_{L^2(\Omega)}
        +
        \|\tilde{\bm f}\|_{L^2(\Omega)}
        +
        \|\tilde g_D\|_{L^2(\partial\Omega)}.
    \end{align*}
    We proceed with $\sigma$. It holds $\sigma = \tilde{\bm f} - A \nabla p$, then we may estimate
    \begin{equation*}
        \|\bm \sigma\|_{_{[H^1_0(\Omega)^*, L^2(\Omega)]_{1/2}}}
        \lesssim
        \|\tilde{\bm f}\|_{L^2(\Omega)}
        +
        \|A \nabla p\|_{_{[H^1_0(\Omega)^*, L^2(\Omega)]_{1/2}}} \overset{*}{\lesssim}
        \|\tilde{\bm f}\|_{L^2(\Omega)}
        +
        \|p\|_{H^{1/2}(\Omega)}.
    \end{equation*}
     This concludes the proof once $(*)$ is established, which follows by interpolation. Indeed, from the definition of $\|\cdot \|_{H^{1}_0(\Omega)^*}$, integration by parts, and the boundedness of $A$,  we have that 
     \[\|A \nabla p\|_{{H^{1}_0(\Omega)^*}} \lesssim \|p\|_{L^2(\Omega)} \]
Interpolating between the above bound and the trivial bound $\|A\nabla  p \|_{L^2(\Omega)} \lesssim \|p\|_{H^1(\Omega)}$ yields the result where we used \eqref{eq:operator_interpolation} and \eqref{eq:interp_Hilbert}.
\end{proof}

\begin{theorem}[Error Estimate]
Assume that the solution $(\bm \sigma, p ) \in H^1(\Omega)^d \times H^2(\Omega)$. Let $\theta^* = (\theta_1^*, \theta_2^*) $ inexactly minimize  $L( \theta) = L(\theta_1, \theta_2) \approx E(\bm \sigma_{\theta_1}, p_{\theta_2})$ where $E$ is given in \eqref{eq:energy_darcy}. Let  $\bm \sigma_{ \theta_1^*}$ and $p_{\theta_2^*}$ be the corresponding neural networks. Then,
\begin{equation}
\|\bm{\sigma}_{\theta_1^*} - \bm{\sigma}\|_{H^{1/2}_{00}(\Omega)^*}^2 + \|p_{\theta_2^*} - p\|_{H^{1/2}(\Omega)}^2 \lesssim L(\theta^*) +  \eta(\theta^*),
\end{equation}
where $\eta(\theta^*) = E(\bm \sigma_{\theta_1^*}, p_{\theta_2^*}) - L(\theta_1^*, \theta_2^*)$ is a quadrature error. Further, 
\begin{multline}
     \|\bm{\sigma}_{\theta_1^*} - \bm{\sigma}\|_{H^{1/2}_{00}(\Omega)^*}^2 + \|p_{\theta_2^*} - p\|_{H^{1/2}(\Omega)}^2 \lesssim \delta(( \bm{\sigma}_{\theta_1^*}, p_{\theta_2^*}))  \\ + \inf_{(\bm \sigma_{\theta_1}, p_{\theta_2} ) \in \mathcal{F}_{\Theta_1} \times \mathcal F_{\Theta_2}} (\|\bm{\sigma}_{\theta_1} - \bm{\sigma} \|^2_{H(\mathrm{div}; 
 \, \Omega) } + \|p_{\theta_2} - p\|_{H^1(\Omega)}^2  ), 
\end{multline}
with $\delta(( \bm{\sigma}_{\theta_1^*}, p_{\theta_2^*})) =E(\bm \sigma_{\theta_1^*}, p_{\theta_2^*}) - \inf_{(\bm \sigma_{\psi_1}, p_{\psi_2} ) \in \mathcal{F}_{\Theta_1} \times \mathcal{F}_{\Theta_2} } E(\bm \sigma_{\psi_1} , p_{\psi_2} )$  as in \eqref{eq:delta_opt}. 
\end{theorem}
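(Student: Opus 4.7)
My plan is to instantiate the abstract error bounds of Theorems~\ref{thm:aposteriori_thm} and~\ref{thm:cea} directly, after fixing the functional-analytic setting. I would set
\[
X = H(\mathrm{div};\Omega)\times H^1(\Omega), \qquad \|(\bm\sigma,p)\|_X^2 = \|\bm\sigma\|_{H(\mathrm{div};\Omega)}^2 + \|p\|_{H^1(\Omega)}^2,
\]
and use the semi-norm
\[
|||(\bm\sigma,p)|||_X^2 = \|\bm\sigma\|_{H^{1/2}_{00}(\Omega)^*}^2 + \|p\|_{H^{1/2}(\Omega)}^2,
\]
so that $|||\cdot|||_X$ matches the left-hand side of the claimed estimates while $\|\cdot\|_X$ matches the best-approximation term on the right-hand side. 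Since $\tanh$-networks are smooth, $\mathcal F_{\Theta_1}\times\mathcal F_{\Theta_2}\subset X$, and the regularity hypothesis $(\bm\sigma,p)\in H^1(\Omega)^d\times H^2(\Omega)$ puts the true solution in $X$ as well. Assumption (A1) for this choice of $X$ and $|||\cdot|||_X$ is precisely Lemma~\ref{lemma:coercivity_darcy}, so no further work is needed there.

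Next I would verify continuity (A2) of $a$ on $X$ term by term from~\eqref{eq:a_darcy}. Using $A^{-1}\in L^\infty(\Omega)$ and Cauchy--Schwarz,
\[
\|A^{-1}\bm\sigma+\nabla p\|_{L^2(\Omega)} \lesssim \|\bm\sigma\|_{L^2(\Omega)} + \|p\|_{H^1(\Omega)};
\]
the divergence term is controlled directly by the definition of the $H(\mathrm{div};\Omega)$ norm, and the boundary pairing by the trace estimate $\|p\|_{L^2(\partial\Omega)}\lesssim \|p\|_{H^1(\Omega)}$. Summing these bounds and applying Cauchy--Schwarz in each pairing gives $a((\bm\sigma,p),(\bm\tau,q))\lesssim \|(\bm\sigma,p)\|_X\,\|(\bm\tau,q)\|_X$, with a constant depending only on $\|A^{-1}\|_{L^\infty(\Omega)}$ and the trace constant, so $\|T\|<\infty$ as required.

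With (A1) and (A2) in place, the a posteriori estimate follows by a single application of Theorem~\ref{thm:aposteriori_thm} with $\alpha$ equal to the constant of Lemma~\ref{lemma:coercivity_darcy}. The a priori estimate follows from Theorem~\ref{thm:cea}, and the infimum term is recognized as
\[
\inf_{(\bm\sigma_{\theta_1},p_{\theta_2})\in\mathcal F_{\Theta_1}\times\mathcal F_{\Theta_2}} \bigl(\|\bm\sigma_{\theta_1}-\bm\sigma\|_{H(\mathrm{div};\Omega)}^2+\|p_{\theta_2}-p\|_{H^1(\Omega)}^2\bigr),
\]
matching exactly the stated right-hand side. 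The principal obstacle, namely obtaining a coercivity estimate in a fractional norm despite only $L^2$ penalization of the boundary data and the divergence constraint, has already been handled in Lemma~\ref{lemma:coercivity_darcy} via duality and interpolation; what remains here is the routine instantiation of the abstract framework.
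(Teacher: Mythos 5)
Your proposal is correct and follows essentially the same route as the paper: instantiate Theorems~\ref{thm:aposteriori_thm} and~\ref{thm:cea} with $\|(\bm\tau,q)\|_{X}^2 = \|\bm\tau\|_{H(\mathrm{div};\Omega)}^2 + \|q\|_{H^1(\Omega)}^2$ and the weaker norm $|||(\bm\tau,q)|||_X^2 = \|\bm\tau\|_{H^{1/2}_{00}(\Omega)^*}^2 + \|q\|_{H^{1/2}(\Omega)}^2$, taking coercivity from Lemma~\ref{lemma:coercivity_darcy} and checking continuity of $a$ via $\|A^{-1}\|_{L^\infty}$ and the trace estimate. The only difference is that you spell out the continuity verification that the paper dismisses as easy, which is fine.
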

\begin{proof}
This follows from Theorems \ref{thm:aposteriori_thm} and \ref{thm:cea} with $$||| (\bm{\tau},q)  |||^2_X =  \|\bm{\tau} \|_{[H^1_0(\Omega)^*, L^2(\Omega)]_{1/2}}^2 +  \|q\|_{H^{1/2}(\Omega)}^2,  $$ and $ \|(\bm{\tau},q)\|_{X}^2 = \|\bm{\tau}\|_{H(\mathrm{div}; \, \Omega)}^2 + \|q\|^2_{H^1(\Omega)}$. Indeed, it is easy to see that $a$ is continuous with respect to $\|\cdot \|_{X}$ norm and the coercivity estimate with respect to $||| \cdot |||_{X}$ is  established in Lemma \ref{lemma:coercivity_darcy}.
\end{proof}

\begin{remark}[Exact Boundary Values] If we assume that our ansatz class exactly satisfies the boundary values, i.e., it holds $\mathcal F_{\Theta_2} \subset H^1_{g_D}(\Omega)$ the forms $E$ and $a$ given in \eqref{eq:energy_darcy} and \eqref{eq:a_darcy} are modified to 
\begin{align*}
    E_0( \bm{\sigma}, p) 
    & = 
    \frac{1}{2} \|A^{-1} \bm \sigma + \nabla p - \bm{f}\|_{L^2(\Omega)}^2 +  \frac12 \|\nabla \cdot \bm \sigma  - g\|_{L^2(\Omega)}^2  
    \\  
    a_0((\bm{\sigma},p), (\bm{\tau},q)) 
    &= 
    (A^{-1} \bm{\sigma} + \nabla p, A^{-1} \bm{\tau} + \nabla q)_\Omega + (\nabla \cdot  \bm{\sigma}, \nabla \cdot \bm \tau)_{\Omega} 
\end{align*}
From \cite[Theorem 2.1]{pehlivanov1994least}, we have the following coercivity result. 
\begin{equation} \nonumber
 a_0((\bm{\sigma},p), (\bm{\sigma},p)) \geq  C (\|\bm{\sigma}\|^2_{H(\dv, \Omega)} + \|p\|_{H^1(\Omega)}^2), 
 \quad \forall (p,\bm{\sigma}) \in  H_0^1(\Omega) \times \bm{H}(\dv; \Omega) .  
\end{equation}
From Theorems~\ref{thm:aposteriori_thm} and \ref{thm:cea},  we can then directly deduce error estimates in the $H^1$--norm for the pressure $p$ and the $H(\mathrm{div};\Omega)$--norm for the velocity $\bm \sigma$. 
\end{remark}

\subsection{Linear Elasticity}\label{sec:elasticity}
We consider the following boundary value problem 
    \begin{alignat}{2}
        - \nabla \cdot (\mathbb C \varepsilon(\bm u)) & = \bm f, && \quad  \mathrm{in} \,\, \Omega \label{eq:elasticity_1} \\ 
        \bm u & = \bm g_D,  && \quad  \mathrm{on} \,\, \partial \Omega.  \label{eq:elasticity_2}
    \end{alignat}
Here, we assume that $\Omega\subset\mathbb R^d$ is a $C^{1,1}$ domain, $\bm f\in L^2(\Omega)^d$, $\bm g_D \in H^{3/2}(\partial\Omega)^d$, $\mathbb C \in C^{0,1}(\Omega, \mathcal L(\mathbb R^{d\times d}_{\textrm{sym}}))$ is symmetric and uniformly elliptic, i.e., it holds $\mathbb C(x)A:A \geq c |A|^2$ and $\mathbb C(x)A:B = \mathbb C(x)B:A$ for all $x\in\Omega$ and $A,B\in\mathbb R^{d\times d}_{\textrm{sym}}$. We use $\varepsilon(\bm u)$ to denote the symmetric gradient, that is $\varepsilon(\bm u) = \frac12(\nabla \bm u + \nabla \bm u^T)$. For isotropic elasticity, $\mathbb C \epsilon = \lambda \mathrm{tr}(\epsilon) I_{d} + 2 \mu \epsilon$ for the Lam\'{e} coefficients $\lambda , \mu$.

The least squares formulation is $E: H^2(\Omega)^d \to \R$ with 
\begin{equation} \label{eq:E_elsasticity}
     E(\bm u) = \frac12 \| \nabla \cdot (\mathbb C \varepsilon(\bm u)) + \bm f \|^2_{L^2(\Omega)} \\ + \frac{1}{2} \|\bm u - \bm g_D\|^2_{L^2(\partial \Omega)}. 
\end{equation}
We recall that the PINNs formualtion is to inexactly solve 
$$ \min_{\theta \in \Theta} L(\theta), \quad L(\theta) \approx E(\bm u_{\theta}).$$
The form $L(\theta)$ can be obtained by collocation or Monte--Carlo. 
The form $a$, given in \eqref{eq:form_a_general}, reads in this setting as $a :  H^2(\Omega)^d \times H^2(\Omega)^d \to \R$:
\begin{equation}
    a(\bm u,\bm v) = (\nabla \cdot (\mathbb C \varepsilon(\bm u)) , \nabla \cdot (\mathbb C \varepsilon(\bm u)) )_{\Omega} +  (\bm u, \bm v)_{\partial \Omega}.   \label{eq:a_elasticity}
\end{equation}

\begin{lemma}[Coercivity]
    The form $a$ given in \eqref{eq:a_elasticity} is coercive with respect to the $H^{1/2}(\Omega)^d$ norm. 
    \begin{align}
        \|\bm u\|_{H^{1/2}(\Omega)}^2 \lesssim a(\bm u,\bm u). \label{eq:coerc_global_elasticity}
    \end{align}
\end{lemma}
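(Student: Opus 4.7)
The plan is to mirror the three-step strategy used for Poisson in Lemma~\ref{lemma:coercivity_poisson}, replacing the Laplacian with the elasticity operator $-\nabla\cdot(\mathbb C\varepsilon(\cdot))$. Since this operator forms an Agmon--Douglis--Nirenberg elliptic system on a $C^{1,1}$ domain with Lipschitz, symmetric and uniformly elliptic coefficients, the needed regularity theory is available. The one genuinely new ingredient beyond the scalar case is Korn's inequality, which converts control of $\varepsilon(\bm u)$ into control of $\nabla\bm u$.

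First I would record three baseline estimates at the $H^2$, $H^1$, and $L^2$ levels. Standard elliptic regularity for the elasticity system yields
\begin{equation*}
    \|\bm u\|_{H^2(\Omega)} \lesssim \|\nabla\cdot(\mathbb C\varepsilon(\bm u))\|_{L^2(\Omega)} + \|\bm u\|_{H^{3/2}(\partial\Omega)}.
\end{equation*}
For the $H^1$ bound, I would lift the boundary data and apply Lax--Milgram to the standard weak formulation, where coercivity on $H^1_0(\Omega)^d$ follows from the uniform ellipticity of $\mathbb C$ together with Korn's inequality, yielding
\begin{equation*}
    \|\bm u\|_{H^1(\Omega)} \lesssim \|\nabla\cdot(\mathbb C\varepsilon(\bm u))\|_{H^1_0(\Omega)^*} + \|\bm u\|_{H^{1/2}(\partial\Omega)}.
\end{equation*}
The $L^2$-level estimate is dual: testing $\nabla\cdot(\mathbb C\varepsilon(\bm u))$ against $\bm\varphi\in H^2(\Omega)^d\cap H^1_0(\Omega)^d$ and integrating by parts twice, the symmetry $\mathbb C A : B = \mathbb C B : A$ recasts the expression as $\bm u$ tested against $\nabla\cdot(\mathbb C\varepsilon(\bm\varphi))$, while the remaining boundary contribution is controlled via $\|\bm u\|_{H^{1/2}(\partial\Omega)^*}$, giving
\begin{equation*}
    \|\bm u\|_{L^2(\Omega)} \lesssim \|\nabla\cdot(\mathbb C\varepsilon(\bm u))\|_{(H^2(\Omega)\cap H^1_0(\Omega))^*} + \|\bm u\|_{H^{1/2}(\partial\Omega)^*}.
\end{equation*}

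With these three bounds, I would interpolate between the $H^1$ and $L^2$ estimates using \eqref{eq:operator_interpolation}, viewing the data-to-solution correspondence as a linear map on product spaces. Invoking \eqref{eq:interp_Hilbert} on the Sobolev side, \eqref{eq:interp_dual_zero_trace} on the interior data side, \eqref{eq:interpolation_dual} on the boundary side, and the fact that interpolation commutes with Cartesian products, one obtains
\begin{equation*}
    \|\bm u\|_{H^{1/2}(\Omega)}^2 \lesssim \|\nabla\cdot(\mathbb C\varepsilon(\bm u))\|_{(H^{3/2}(\Omega)\cap H^1_0(\Omega))^*}^2 + \|\bm u\|_{L^2(\partial\Omega)}^2,
\end{equation*}
and the right-hand side is bounded by $a(\bm u,\bm u)$ using the continuous embedding $L^2(\Omega)\hookrightarrow (H^{3/2}(\Omega)\cap H^1_0(\Omega))^*$.

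The main obstacle I anticipate is the clean derivation of the dual $L^2$ estimate: one must double-integrate by parts, exploit the symmetry of $\mathbb C$ to move both derivatives onto the test function, and carefully identify which boundary integrals vanish because $\bm\varphi\in H^1_0$. The vector-valued character and the symmetrised gradient make the bookkeeping heavier than in Poisson, but no fundamentally new analytical idea is needed beyond the appendix argument for the scalar case. Continuity of $a$ on $H^2(\Omega)^d$ is routine, combining $\mathbb C\in C^{0,1}$ with the standard trace inequality, so the coercivity estimate above is all that is required to feed Theorems~\ref{thm:aposteriori_thm} and \ref{thm:cea}.
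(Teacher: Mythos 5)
Your proposal follows essentially the same route as the paper: the three baseline estimates (elliptic regularity at the $H^2$ level, Lax--Milgram with Korn's inequality at the $H^1$ level, and the twice-integrated-by-parts dual estimate at the $L^2$ level, the latter being exactly the appendix lemma for elasticity), followed by operator interpolation between the $H^1$ and $L^2$ estimates and the trivial embedding $L^2(\Omega)\hookrightarrow (H^{3/2}(\Omega)\cap H^1_0(\Omega))^*$ to conclude coercivity. Your explicit mention of Korn's inequality and of \eqref{eq:interp_dual_zero_trace} (rather than \eqref{eq:interp_dual_zero_avg}, which the paper cites, apparently as a typo) only makes the argument cleaner; there is no substantive difference.
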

\begin{proof}
    From standard regularity theory, we have
\begin{equation}
    \|\bm u\|_{H^2(\Omega)} 
    \lesssim 
    \|\nabla \cdot (\mathbb C \varepsilon(\bm u) )\|_{L^2(\Omega)} 
    +
    \|\bm u\|_{H^{3/2}(\partial \Omega)}. 
\end{equation}
Using this estimate in duality arguments (see Appendix \ref{sec:poisson_negative} for details) we show that 
\begin{equation}
    \|\bm u\|_{L^2(\Omega)} 
    \lesssim 
    \|\nabla \cdot (\mathbb C \varepsilon(\bm u) )\|_{(H^{2}(\Omega) \cap H_0^1(\Omega))^*} 
    +
    \|\bm u\|_{H^{1/2}(\partial \Omega)^*}. 
\end{equation}
Using Lax-Milgram for the corresponding weak formulation we obtain
\begin{equation}
\|\bm u\|_{H^1(\Omega)} \lesssim \|\nabla \cdot (\mathbb C \varepsilon(\bm u)) \|_{H_0^1(\Omega)^*} + \|\bm u\|_{H^{1/2}(\partial \Omega)}.     
\end{equation}
By interpolation theory (\eqref{eq:operator_interpolation}, \eqref{eq:interp_Hilbert}, 
\eqref{eq:interp_dual_zero_avg}, and \eqref{eq:interpolation_dual}), we obtain  
for all $\bm u \in H^2(\Omega)$, 
\begin{align*}
    \|\bm u\|^2_{H^{1/2}(\Omega)} 
    &\lesssim
    (\|\nabla \cdot(\mathbb C \varepsilon(\bm u))\|^2_{(H^{3/2}(\Omega) \cap H^1_0(\Omega))^*} 
    +
    \|\bm u\|^2_{L^2(\partial \Omega)}) 
    \\ 
    &\lesssim 
    ( \|\nabla \cdot(\mathbb C \varepsilon(\bm u))\|^2_{L^2(\Omega)} 
    +
    \|\bm u\|^2_{L^2(\partial \Omega)}) 
    \\
    &=
    a(\bm u,\bm u).
\end{align*}
\end{proof}

\begin{theorem}[Error Estimate]
    Let $\theta^*$ inexactly  
minimize $L(\theta)$ with $L(\theta) \approx E(u_{\theta})$ where $E$ is given in \eqref{eq:E_elsasticity}. Let  $u_{\theta^*}$ be the corresponding neural network. 
The following holds.  
\begin{equation}
  \|\bm u_{\theta^*} - \bm u\|^2_{H^{1/2}(\Omega)} \lesssim L(\theta^*) + \eta(\theta^*), 
\end{equation}
where $\eta(\theta^*) = E(u_{\theta^*}) - L(\theta^*)$ is a quadrature error. Further,
\begin{equation}
    \|\bm u_{\theta^*} - \bm u\|^2_{H^{1/2}(\Omega)} \lesssim \delta( \bm u_{\theta^*}) + \inf_{\bm u_\theta \in \mathcal{F}_\Theta} \|\bm u_{\theta} - \bm u\|^2_{H^2(\Omega) }, \label{eq:estimate_elasticity}
\end{equation}
where $\delta(\bm u_{\theta^*}) = E(\bm u_{\theta^*}) - \inf_{\bm u_\psi \in \mathcal{F}_{\Theta}} E(\bm u_\psi), $ as in \eqref{eq:delta_opt}.
\end{theorem}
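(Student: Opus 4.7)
The plan is to recognize that this theorem is a direct consequence of the abstract a posteriori estimate (Theorem \ref{thm:aposteriori_thm}) and the abstract Céa lemma (Theorem \ref{thm:cea}) applied to the elasticity setting, once assumptions (A1) and (A2) are verified. Assumption (A1), i.e.\ coercivity of the form $a$ in \eqref{eq:a_elasticity} with respect to the seminorm $|||\cdot|||_X = \|\cdot\|_{H^{1/2}(\Omega)}$, is already established in the preceding lemma. So the only nontrivial task left is to verify continuity of $a$ on $X = H^2(\Omega)^d$.

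First I would check continuity. Since $\mathbb C \in C^{0,1}(\Omega, \mathcal L(\mathbb R^{d\times d}_{\mathrm{sym}}))$, the differential operator $\bm u \mapsto \nabla \cdot (\mathbb C \varepsilon(\bm u))$ maps $H^2(\Omega)^d$ continuously into $L^2(\Omega)^d$, with a constant controlled by the $C^{0,1}$ norm of $\mathbb C$. Combining this with the Cauchy--Schwarz inequality in $L^2(\Omega)$ and with the trace inequality $\|\bm u\|_{L^2(\partial\Omega)} \lesssim \|\bm u\|_{H^2(\Omega)}$ (actually $H^1$ suffices), I obtain
\begin{equation}
a(\bm u,\bm v) \lesssim \|\bm u\|_{H^2(\Omega)} \|\bm v\|_{H^2(\Omega)},
\end{equation}
so (A2) holds with $X = H^2(\Omega)^d$.

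With both assumptions in hand, the first estimate follows by applying Theorem~\ref{thm:aposteriori_thm} with $|||\cdot|||_X = \|\cdot\|_{H^{1/2}(\Omega)}$ to the neural network iterate $\bm u_{\theta^*}$ and the exact solution $\bm u$: the coercivity estimate \eqref{eq:coerc_global_elasticity} provides the constant $\alpha$, and the quadrature error term $\eta(\theta^*) = E(\bm u_{\theta^*}) - L(\theta^*)$ appears through \eqref{eq:aposteriori_general}. The second estimate \eqref{eq:estimate_elasticity} is obtained analogously from Theorem~\ref{thm:cea}, where the same coercivity constant is used on the left-hand side and the continuity bound $\|T\|^2$ controls the approximation term on the right, exactly as was done for Poisson's equation in Theorem~\ref{thm:error_estimate_poisson}.

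Since all the heavy lifting (the regularity, duality, and interpolation chain) has already been absorbed into the coercivity lemma, there is no real obstacle here beyond ensuring the continuity constants in (A2) are finite, which is immediate from the assumed regularity of $\mathbb C$. The proof is therefore essentially bookkeeping: instantiate the abstract framework with $X = H^2(\Omega)^d$, $|||\cdot|||_X = \|\cdot\|_{H^{1/2}(\Omega)}$, the operator $T \bm u = (\nabla\cdot(\mathbb C\varepsilon(\bm u)),\,\bm u|_{\partial\Omega})$, and invoke Theorems~\ref{thm:aposteriori_thm} and \ref{thm:cea}.
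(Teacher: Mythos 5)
Your proposal is correct and follows essentially the same route as the paper: verify continuity of $a$ on $H^2(\Omega)^d$ via the Lipschitz bound on $\mathbb C$ and the trace inequality, then apply Theorems~\ref{thm:aposteriori_thm} and \ref{thm:cea} with $|||\cdot|||_X = \|\cdot\|_{H^{1/2}(\Omega)}$ using the coercivity estimate \eqref{eq:coerc_global_elasticity}. Nothing is missing.
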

\begin{proof}
     We verify the  continuity of $a$ on $X=H^2(\Omega)^d$. Indeed, we have that 
    \begin{equation}
        a(\bm u,\bm v) 
        \leq
        \max(\|\mathbb C\|^2_{C^{0,1}}, C_{\mathrm{tr}}^2)
        \|\bm u\|_{H^2(\Omega)} \|\bm v\|_{H^2(\Omega)}. 
    \end{equation}
    To show  the estimates, we simply apply Theorems \ref{thm:aposteriori_thm} and \ref{thm:cea} with $|||\cdot |||_{X} = \|\cdot\|_{H^{1/2}(\Omega)}$ where  the  coercivity estimate \eqref{eq:coerc_global_elasticity} is used.   
\end{proof}

\subsection{Stokes System} 
For the fluid velocity $\bm{u}$ and pressure $p$, we  consider the following Stokes flow system. 
\begin{alignat}{2}
    - \Delta \bm{u} + \nabla p & = \bm{f}, && \quad  \mathrm{in}  \,\, \Omega, \\ 
    \nabla \cdot \bm{u} & = 0, && \quad \mathrm{in} \,\, \Omega , \\ 
    \bm{u} & = \bm g_D, && \quad \mathrm{ on } \,\,  \partial \Omega. 
\end{alignat}
We assume that $
\bm f \in L^2(\Omega)^d$,   $\bm g_{D} 
\in H^{3/2}(\partial \Omega)^d$ and that $\bm g _D$ satisfies the compatibility condition $\int_{\partial \Omega} \bm g_D \cdot 
\bm n = 0 $. The associated least squares energy is 
\begin{equation}
    \label{eq:energy_stokes}
    E(\bm{u}, p) = \frac12 \|- \Delta \bm{u} + \nabla p -  \bm{f}\|_{L^2(\Omega)}^2 + \frac12 \|\nabla \cdot \bm{u} \|_{L^2(\Omega)}^2 + \frac12 \|\bm{u} - \bm g_D\|_{L^2(\partial \Omega)}^2. 
\end{equation} 
We recall that the PINNs formulation is to inexactly solve: 
\begin{equation}
\min_{(\theta_1, \theta_2) \in \Theta_1 \times 
\Theta_2 } L(\theta_1, \theta_2), \quad L(\theta_1, \theta_2) \approx E(\bm u_{\theta_1}, p_{\theta_2}). 
\end{equation}
Here, the form $a: (H^2(\Omega)^d \times H^1(\Omega) )^2 \rightarrow \R $ is given by:
\begin{align}\label{eq:a_stokes}
a((\bm{u},p), (\bm{\tau}, q)) = (-\Delta \bm{u} + \nabla p,  -\Delta \bm{\tau} + \nabla q)_{\Omega} + (\nabla \cdot \bm{u}, \nabla \cdot \bm{\tau})_{\Omega} + (\bm u , \bm{\tau})_{\partial \Omega}.
\end{align}
\begin{lemma}[Coercivity]\label{lemma:coercivity_stokes}Let $\partial \Omega$ be a $C^2$
boundary \footnote{More generally, $ \Omega$ can be assumed to be $H^2$--regular. Precisely, this means that for every $\bm u\in H^2(\Omega)$ and $p\in H^1(\Omega)\cap L^2_0(\Omega)$, estimate \eqref{eq:stokes_h2_regularity} holds.}. For any  $\bm u\in H^2(\Omega)$ and $p \in H^1(\Omega)$, we can estimate 
\begin{align} \label{eq:coercivity_stokes} 
    \| \bm u \|^2_{H^{1/2}(\Omega)} 
    + 
    \|p\|^2_{(H^{1/2}(\Omega)\cap L^2_0(\Omega))^*} \lesssim         a((\bm{u},p) , (\bm{u} , p)). 
\end{align}
\end{lemma}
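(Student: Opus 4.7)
The plan is to follow the three-step template used for Poisson in Lemma~\ref{lemma:coercivity_poisson} and Darcy in Lemma~\ref{lemma:coercivity_darcy}: prove an energy ($H^1$) estimate, prove a dual ($L^2$-type) estimate by duality against the $H^2$-Stokes solution operator, and then interpolate between the two endpoints at $\theta = 1/2$ using \eqref{eq:operator_interpolation}. The energy endpoint is classical; applying the inf-sup theory for the Stokes weak formulation (after lifting the inhomogeneous boundary data to a divergence-compatible extension) yields, for any $(\bm u, p)\in H^1(\Omega)^d \times L^2(\Omega)$,
$$\|\bm u\|_{H^1(\Omega)} + \|p - \bar p\|_{L^2(\Omega)} \lesssim \|-\Delta \bm u + \nabla p\|_{H^1_0(\Omega)^*} + \|\nabla \cdot \bm u\|_{L^2(\Omega)} + \|\bm u\|_{H^{1/2}(\partial \Omega)},$$
where $\bar p$ denotes the mean of $p$.

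For the dual endpoint, I would argue against the $H^2$-Stokes solution operator, in direct analogy with the Poisson argument of Appendix~\ref{sec:poisson_negative}. Given $\phi \in L^2(\Omega)^d$ solve the auxiliary problem $-\Delta \bm v + \nabla r = \phi$, $\nabla \cdot \bm v = 0$, $\bm v|_{\partial \Omega} = 0$; and given $\psi \in H^1(\Omega) \cap L^2_0(\Omega)$ solve $-\Delta \bm w + \nabla s = 0$, $\nabla \cdot \bm w = \psi$, $\bm w|_{\partial \Omega} = 0$. The assumed $H^2$-regularity controls $(\bm v, r)$ and $(\bm w, s)$ in $H^2 \times (H^1 \cap L^2_0)$ by $\|\phi\|_{L^2}$ and $\|\psi\|_{H^1}$ respectively. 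Integrating $(\bm u, \phi) + (p, \psi)$ by parts and using $\bm v|_{\partial \Omega} = \bm w|_{\partial \Omega} = 0$ together with the divergence conditions on $\bm v, \bm w$, the interior terms collapse into pairings of $-\Delta \bm u + \nabla p$ with $\bm v, \bm w$ and of $\nabla \cdot \bm u$ with $r, s$, while the residual boundary integrals pair $\bm u|_{\partial \Omega}$ with the traces of $\partial_n \bm v, \partial_n \bm w, r, s$. Taking the supremum over $\phi$ and $\psi$ produces
$$\|\bm u\|_{L^2(\Omega)} + \|p\|_{(H^1(\Omega) \cap L^2_0(\Omega))^*} \lesssim \|-\Delta \bm u + \nabla p\|_{(H^2(\Omega) \cap H^1_0(\Omega))^*} + \|\nabla \cdot \bm u\|_{(H^1(\Omega) \cap L^2_0(\Omega))^*} + \|\bm u\|_{H^{1/2}(\partial \Omega)^*}.$$

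With both endpoints in hand, applying \eqref{eq:operator_interpolation} with $\theta = 1/2$ together with \eqref{eq:interp_Hilbert}, \eqref{eq:interp_dual_zero_trace}, \eqref{eq:interp_dual_zero_avg} and \eqref{eq:interpolation_dual} gives the claim \eqref{eq:coercivity_stokes}: on the left, $\bm u$ lands in $H^{1/2}(\Omega)$ and $p$ in $(H^{1/2}(\Omega) \cap L^2_0(\Omega))^*$; on the right, the bulk residuals lie in $(H^{3/2} \cap H^1_0)^*$ and $(H^{1/2} \cap L^2_0)^*$, both of which contain $L^2(\Omega)$, while the boundary residual is controlled in $L^2(\partial \Omega)$. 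The main obstacle will be the dual endpoint: the Stokes integration-by-parts produces boundary integrals such as $\int_{\partial \Omega}(\bm u \cdot \bm n)\, r$ and $\int_{\partial \Omega}(\bm u \cdot \bm n)\, s$ that do not cancel and must be absorbed as $\|\bm u\|_{H^{1/2}(\partial \Omega)^*}$ paired against the $H^{1/2}(\partial \Omega)$-traces of the adjoint pressures. A secondary delicate point is that controlling $\|p\|_{(H^1 \cap L^2_0)^*}$ forces the use of the auxiliary Stokes problem with prescribed nonzero divergence $\psi$; its solvability and $H^2$-regularity require $\psi \in L^2_0$, which is precisely what produces the $L^2_0$ constraint on the dual pressure space in the final estimate.
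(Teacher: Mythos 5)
Your proposal is correct and follows essentially the same route as the paper: the weak (inf-sup) theory gives the $H^1\times L^2$ energy endpoint, a duality argument against the homogeneous-boundary $H^2$-regular Stokes problem gives the dual endpoint (the paper combines your two auxiliary problems into one with data $(\bm\phi_1,-\phi_2)$ and then specializes, which is only a cosmetic difference), and interpolation at $\theta=1/2$ via \eqref{eq:operator_interpolation}, \eqref{eq:interp_Hilbert}, \eqref{eq:interp_dual_zero_trace}, \eqref{eq:interp_dual_zero_avg} and \eqref{eq:interpolation_dual} yields \eqref{eq:coercivity_stokes}. The boundary terms pairing $\bm u|_{\partial\Omega}$ with the adjoint pressure and normal-derivative traces, and the $L^2_0$ compatibility for the prescribed-divergence auxiliary problem, are handled exactly as in the paper's appendix proof.
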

\begin{proof}
It suffices to show the result for $u \in H^2(\Omega)$ and $p \in H^1(\Omega) \cap L^2_0(\Omega)$. Indeed, for any $p \in H^1(\Omega)$, let $ \tilde p = p - \langle p \rangle$ where $\langle p \rangle $ is the average of $p$. It is easy to verify that  $\tilde p \in  H^1(\Omega) \cap L^2_0(\Omega)$ with  
\[ \|p\|^2_{(H^{1/2}(\Omega)\cap L^2_0(\Omega))^*} = \|\tilde p\|^2_{(H^{1/2}(\Omega)\cap L^2_0(\Omega))^*}, \,\,\,  a((\bm{u},p) , (\bm{u} , p)) = a((\bm{u},\tilde p) , (\bm{u} , \tilde p)). \]
Hereinafter, $p \in H^1(\Omega) \cap L^2_0(\Omega)$.  Regularity theory for Stokes' equation asserts that \cite[Proposition 2.2]{temam2001navier} \begin{equation}\label{eq:stokes_h2_regularity}
        \|\bm u\|_{H^2(\Omega)} 
        +
        \|p\|_{H^1(\Omega)} 
        \lesssim 
        \|-\Delta \bm u 
        + 
        \nabla p\|_{L^2(\Omega)} 
        + 
        \|\nabla\cdot \bm u\|_{H^1(\Omega)} + \|\bm{u}\|_{H^{3/2}(\partial \Omega)}.
    \end{equation}
    Employing the above regularity estimate, we  consider a dual problem and derive the corresponding duality estimate (see Section~\ref{sec:regularity_stokes} in the Appendix). 
    \begin{align}\label{eq:stokes_dual_h2_regularity}
        \begin{split}
            \|\bm u\|_{L^2(\Omega)} 
            +
            \| p \|_{(H^1(\Omega)\cap L^2_0(\Omega))^*}
            &\lesssim
            \|-\Delta \bm u + \nabla p\|_{(H^2(\Omega)\cap H^1_0(\Omega))^*}
            \\
            &
            +
            \|\nabla \cdot \bm u\|_{(H^1(\Omega)\cap L^2_0(\Omega))^*} +
            \|\bm u\|_{H^{1/2}(\partial\Omega)^*}.
        \end{split}
    \end{align}
    The weak existence theory \cite[Theorem 10]{amrouche1991existence} provides the estimate \begin{align}\label{eq:stokes_h1_regularity}
        \begin{split}
            \|\bm u\|_{H^1(\Omega)}
            +
            \|p\|_{L^2(\Omega)}
            &\lesssim
            \|-\Delta \bm u + \nabla p \|_{H^1_0(\Omega)^*}
            +
            \| \nabla\cdot \bm u \|_{L^2(\Omega)}
            \\ &+ 
            \|\bm u\|_{H^{1/2}(\partial\Omega)},        
        \end{split}
    \end{align}
    where the Stokes operator is to be understood in weak form, i.e.,
    \[
    \langle -\Delta\bm u + \nabla p, \bm\varphi\rangle_{H^1_0(\Omega)} 
    =
    \int_\Omega \nabla\bm u \cdot \nabla \bm\varphi + p \nabla\cdot \bm\varphi\mathrm dx. 
    \]
    Finally, interpolating between \eqref{eq:stokes_dual_h2_regularity} and \eqref{eq:stokes_h1_regularity} we obtain
    \begin{align*}
        \begin{split}
            \|\bm u\|_{H^{1/2}(\Omega)}
            +
            \|p\|_{(H^{1/2}(\Omega)\cap L^2_0(\Omega))^*}
            &\lesssim
            \| -\Delta \bm u + \nabla p \|_{(H^{3/2}(\Omega)\cap H^1_0(\Omega))^*}
            \\
            &+
            \|\bm u\|_{L^2(\partial\Omega)}
            +
            \|\nabla\cdot \bm u\|_{[(H^1(\Omega) \cap L^2_0(\Omega))^* , L^2(\Omega) ]_{1/2,2}}.
        \end{split}
    \end{align*}
    The assertion follows readily from the estimate above and the proof is completed.
\end{proof}
\begin{theorem}[Error Estimates]\label{thm:stokes}
 Let $\theta^* = (\theta_1^*, \theta_2^*) $ inexactly minimize the loss function    $L( \theta) = L(\theta_1, \theta_2) \approx E(\bm u_{\theta_1}, p_{\theta_2})$ where $E$ is given in \eqref{eq:energy_stokes}. Let  $\bm u_{ \theta_1^*}$ and $p_{\theta_2^*}$ be the corresponding neural networks. Then, 
 \begin{equation}
 \|\bm u - \bm{u}_{\theta_1^*} \|^2_{H^{1/2}(\Omega)}
     +
\|p - p_{\theta_2^*}\|^2_{(H^{1/2}(\Omega)\cap L^2_0(\Omega))^*}   \lesssim  
L(\theta^*) + \eta(\theta^*),
 \end{equation}
 where $\eta(\theta^*)  = E(u_{\theta_1^*}, p_{\theta_2^*}) - L(\theta_1^*, \theta_2^*)$. Further,
\begin{multline} \label{eq:error_estimate_stokes}
     \|\bm u - \bm{u}_{\theta_1^*} \|^2_{H^{1/2}(\Omega)}
            +
            \|p - p_{\theta_2^*}\|^2_{(H^{1/2}(\Omega)\cap L^2_0(\Omega))^*}   \lesssim \delta((\bm{u}_{\theta_1^*}, p_{\theta_2^*})) \\ 
            +   \inf_{(\bm{\sigma}_{\theta_1}, q_{\theta_2}) \in \mathcal{F}_{\Theta_1} \times \mathcal{F}_{\Theta_2} } (\|\bm u - \bm{\sigma}_{\theta_1} \|^2_{H^{2}(\Omega)}
            +
            \|p - q_{\theta_2}\|^2_{H^1(\Omega)}), 
\end{multline}
with $\delta((\bm{u}_{\theta_1^*}, p_{\theta_2^*})) = E((\bm{u}_{\theta_1^*}, p_{\theta_2^*})) - \inf_{(\bm u_{\psi_1}, p_{\psi_2}) \in \mathcal{F}_{\Theta_1} \times \mathcal{F}_{\Theta_2}} E((\bm u_{\psi_1}, p_{\psi_2}))$, as in \eqref{eq:delta_opt}.
\end{theorem}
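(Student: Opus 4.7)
The plan is to deduce both estimates directly from the abstract machinery of Theorems~\ref{thm:aposteriori_thm} and \ref{thm:cea}, using the coercivity already established in Lemma~\ref{lemma:coercivity_stokes}. Take the product Hilbert space $X = H^2(\Omega)^d \times H^1(\Omega)$ equipped with the norm $\|(\bm u, p)\|_X^2 = \|\bm u\|_{H^2(\Omega)}^2 + \|p\|_{H^1(\Omega)}^2$, and set the coercivity semi-norm to be
\[
|||(\bm u, p)|||_X^2 = \|\bm u\|_{H^{1/2}(\Omega)}^2 + \|p\|_{(H^{1/2}(\Omega)\cap L^2_0(\Omega))^*}^2 .
\]
With these choices, Lemma~\ref{lemma:coercivity_stokes} is exactly Assumption (A1) for the form $a$ in \eqref{eq:a_stokes}.

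Next, I need to verify (A2), that is, continuity of $a$ on $X \times X$. Inspecting the three terms of $a$, I bound $\|-\Delta \bm u + \nabla p\|_{L^2(\Omega)} \lesssim \|\bm u\|_{H^2(\Omega)} + \|p\|_{H^1(\Omega)}$, the divergence term $\|\nabla\cdot \bm u\|_{L^2(\Omega)} \lesssim \|\bm u\|_{H^1(\Omega)} \leq \|\bm u\|_{H^2(\Omega)}$, and the boundary term via the standard trace inequality $\|\bm u\|_{L^2(\partial\Omega)} \lesssim \|\bm u\|_{H^1(\Omega)} \leq \|\bm u\|_{H^2(\Omega)}$. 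Cauchy--Schwarz in $L^2(\Omega)$ and $L^2(\partial\Omega)$ then yields
\[
a((\bm u,p),(\bm \tau,q)) \lesssim \|(\bm u,p)\|_X \, \|(\bm \tau,q)\|_X,
\]
so $a$ is continuous with operator norm $\|T\|^2$ bounded by a universal constant.

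With (A1)--(A2) in hand, the a posteriori bound follows by applying Theorem~\ref{thm:aposteriori_thm} to the pair $(\bm u_{\theta_1^*}, p_{\theta_2^*})$, which immediately gives
\[
|||(\bm u - \bm u_{\theta_1^*},\, p - p_{\theta_2^*})|||_X^2 \lesssim L(\theta^*) + \eta(\theta^*) .
\]
The a priori Céa-type bound \eqref{eq:error_estimate_stokes} is obtained analogously from Theorem~\ref{thm:cea}, where the infimum is taken over $\mathcal F_{\Theta_1} \times \mathcal F_{\Theta_2} \subset X$ and appears in the $\|\cdot\|_X$ norm, i.e.\ $H^2(\Omega) \times H^1(\Omega)$.

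I do not expect genuine obstacles here: the heavy lifting --- namely the regularity/duality/interpolation chain producing \eqref{eq:coercivity_stokes} --- has already been carried out in Lemma~\ref{lemma:coercivity_stokes}, and the continuity check is entirely routine. The only subtlety worth noting is that the pressure component of $|||\cdot|||_X$ uses the quotient norm via $L^2_0(\Omega)$, but this is harmless because the bilinear form $a$ itself is invariant under adding constants to $p$ (the map $p \mapsto \nabla p$ annihilates constants and no boundary term for $p$ appears), which is precisely the reduction to zero-mean pressures used at the start of the proof of Lemma~\ref{lemma:coercivity_stokes}.
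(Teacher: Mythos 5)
Your proposal is correct and follows essentially the same route as the paper: apply Theorems~\ref{thm:aposteriori_thm} and \ref{thm:cea} with $|||(\bm\sigma,q)|||_X^2 = \|\bm\sigma\|_{H^{1/2}(\Omega)}^2 + \|q\|_{(H^{1/2}(\Omega)\cap L^2_0(\Omega))^*}^2$ and $\|(\bm\sigma,q)\|_X^2 = \|\bm\sigma\|_{H^2(\Omega)}^2 + \|q\|_{H^1(\Omega)}^2$, with coercivity from Lemma~\ref{lemma:coercivity_stokes} and a routine continuity check. Your explicit continuity verification and the remark on invariance of $a$ under pressure shifts merely spell out what the paper leaves implicit.
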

\begin{proof}
The proof makes use of the abstract Theorem \ref{thm:cea} with $||| (\bm \sigma, q )|||_{X}^2 = \|\bm{\sigma}\|_{H^{1/2}(\Omega)}^2 + \|q\|_{(H^{1/2} \cap L^2_0(\Omega))^*}^2$ and $\|(\bm \sigma, q )\|_{X}^2 = \|\bm{\sigma}\|_{H^{2}(\Omega)}^2 + \|q\|_{H^{1}(\Omega)}^2$. The continuity assumption is easy to verify and the coercivity requirement  \eqref{eq:coercivity_assumption} is established in Lemma \ref{lemma:coercivity_stokes}.
\end{proof}
\begin{remark}[Average Penalty]
For a unique pressure solution, we  let $p \in L^2_0(\Omega)$ and penalize the pressure average $\langle p \rangle$. The forms $E$ and $a$ given in \eqref{eq:energy_stokes} and \eqref{eq:a_stokes} are  modified to 
\begin{align}
E_{\int}(\bm u, p) & = E(\bm u, p) + \frac12 \|\langle p \rangle\|^2_{L^2(\Omega)}, \label{eq:stokes_zeroavg_0}\\  
a_{\int} ((\bm{u},p), (\bm{\tau}, q)) & = a((\bm{u},p), (\bm{\tau}, q)) +  (\langle p \rangle, \langle q \rangle)_{\Omega}.  \label{eq:stokes_zeroavg_1}
\end{align}
An immediate application of Lemma \ref{lemma:coercivity_stokes} and the definition of $a_{\int}$ now yields 
\begin{equation}
 \| \bm u \|^2_{H^{1/2}(\Omega)} 
    + 
    \|p\|^2_{(H^{1/2}(\Omega)\cap L^2_0(\Omega))^*}  + 
\|\langle p \rangle \|^2 _{L^2(\Omega)} \lesssim a_{\int}((\bm{u},p) , (\bm{u} , p)).
\end{equation}
Similar to the proof of bound \eqref{eq:error_estimate_stokes}, the above coercivity bound can be used to obtain additional error estimates for $\|\langle p_{\theta^*} \rangle \|_{L^2(\Omega)}^2$.
\end{remark}
\begin{remark}[Exact Boundary Values]
If we assume that our ansatz class satisfies the boundary conditions exactly, i.e., it holds $\mathcal F_{\Theta_1} \subset (H^2(\Omega)\cap H^1_{g_D}(\Omega))^d$, then the forms given by \eqref{eq:stokes_zeroavg_0} and \eqref{eq:stokes_zeroavg_1} are  modified to 
\begin{align*}
    E_{0}(\bm u, p) 
    &=
    E(\bm u, p) - \frac 12 \|\bm u - \bm g_D\|_{L^2(\partial \Omega)}^2, 
    \\   
    a_{0} ((\bm{u},p), (\bm{\tau}, q))  
    &=
    a((\bm{u},p), (\bm{\tau}, q)) - (\bm u, \bm v)_{\partial \Omega}. 
\end{align*}
From \eqref{eq:stokes_h1_regularity}, it is then evident that $a_0$ is coercive on $(H^2(\Omega) \cap H^1_0(\Omega))^d \times H^1(\Omega) \cap L^2_0(\Omega)$.  From similar arguments to Theorem~\ref{thm:stokes}, we then have the following estimate: 
\begin{equation}
    \label{eq:error_estimate_stokes_2}
    \|\bm u - \bm{u}_{\theta_1^*} \|^2_{H^{1}(\Omega)}
    +
    \|p - p_{\theta_2^*}\|^2_{L^2(\Omega)}   
    \lesssim L(\theta^*) + \eta(\theta^*). 
\end{equation} 
An a priori estimate in the above norms can be similarly derived. 
\end{remark}
\begin{remark}[Exact Boundary Values and Exact Divergence] If we further restrict our ansatz class to $\tilde{\mathcal{F}}_{\Theta_1}$ with exact boundary data and with $\nabla \cdot \bm w= 0$, $\forall \bm w  \in \tilde{\mathcal{F}}_{\Theta_1}$, then, from \eqref{eq:stokes_h2_regularity} and similar arguments to  Theorem~\ref{thm:stokes}, error estimates in the $H^2$--norm of the velocity and $H^1$--norm for the pressure can be established. We skip the details for brevity.   
\end{remark}
\subsection{Parabolic Equations}\label{sec:parabolic}
We consider the initial value problem
\begin{alignat}{2}
    \partial_tu - \nabla \cdot A \nabla u &= f,   &&\quad \text{in }I\times\Omega,
    \\
    u &= g_D,  && \quad \text{on }I\times\partial\Omega,
    \\
    u(0) &= u_0, && \quad \text{in }\Omega. 
\end{alignat}
We assume that $I 
\subset \mathbb R$,   $\Omega\subset\mathbb R^d$ is of $C^{1,1}$ regularity, the forcing $f$ is a member of $L^2(I,L^2(\Omega))$, the boundary data $g_D \in L^2(I, H^{3/2}(\partial\Omega))\cap H^{3/4}(I,L^2(\partial\Omega))$, $u_0 \in H^1(\Omega)$ with $g(0) = u_0 \vert_{\partial \Omega}$,  and $A\in C^{0,1}(\Omega, \mathbb R^{d\times d})$ is uniformly elliptic and symmetric. Under these assumptions the equation above has a unique solution of regularity $L^2(I,H^2(\Omega)) \cap H^1(I, L^2(\Omega))$, see \cite[Chapter 4, Theorem 6.1]{lions2012non} or \cite{weidemaier2002maximal}. The associated least squares energy is
\begin{align}\label{eq:E_parabolic}
    E(u) 
    &=
    \frac12\|\partial_t u - \nabla\cdot A\nabla u - f\|^2_{L^2(I,L^2(\Omega))}
    +
    \frac12\| u - g_D \|^2_{L^2(I,L^2(\partial\Omega))}
    \\
    &+
    \frac12 \|u(0) - u_0\|^2_{L^2(\Omega)}. \nonumber
\end{align}
The PINNs formulation is to inexactly solve
\begin{equation}
    \min_{\theta \in \Theta} L(\theta), \quad L(\theta) \approx E(u_{\theta}). 
\end{equation}
Note that the above is a space time formulation. The form $a$ is given by $a:[H^1(I,L^2(\Omega))\cap L^2(I,H^2(\Omega))]^2 \to \mathbb R$
\begin{align*}
    a(u,v)
    &=
    (\partial_tu - \nabla\cdot A\nabla u, \partial_tv - \nabla\cdot A\nabla v)_{L^2(I,L^2(\Omega))}
    \\
    &+
    (u,v)_{L^2(I,L^2(\partial\Omega))}
    +
    (u(0),v(0))_{L^2(\Omega)}.
\end{align*}
We can derive the following coercivity estimate.
\begin{lemma}[Coercivity]\label{eq:coerc_global_parabolic} Assume we are in the setting above, then for any $u \in H^1(I,L^2(\Omega))\cap L^2(I,H^2(\Omega))$ we can estimate
\begin{align*}
    \|u\|^2_{L^2(I,H^{1/2}(\Omega))}
    &\lesssim a(u,u)
\end{align*}
\end{lemma}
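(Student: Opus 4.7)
The plan follows the three-step strategy used for the Poisson equation, lifted to the space-time setting. Let $Lu = \partial_t u - \nabla\cdot A\nabla u$ and view $a(u,u) = \|Tu\|_Y^2$ for $Tu = (Lu,\, u|_{I\times\partial\Omega},\, u(0))$. I would establish continuity of $T^{-1}$ at two regularity levels and interpolate at $\theta = 1/2$ via \eqref{eq:operator_interpolation} to land in $L^2(I, H^{1/2}(\Omega))$.

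\emph{High-regularity (energy) estimate.} From the classical weak parabolic theory (Lions), after lifting the boundary data one obtains
\begin{equation*}
    \|u\|^2_{L^2(I, H^1(\Omega))} \lesssim \|Lu\|^2_{L^2(I, H^1_0(\Omega)^*)} + \|u|_{\partial\Omega}\|^2_{\mathcal T_1} + \|u(0)\|^2_{L^2(\Omega)},
\end{equation*}
for a suitable space-time trace space $\mathcal T_1$ for $L^2(I, H^1(\Omega))$-solutions.

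\emph{Low-regularity (dual) estimate.} For $\phi \in L^2(I, L^2(\Omega))$, let $\psi$ solve the backward problem $-\partial_t\psi - \nabla\cdot A\nabla\psi = \phi$ with $\psi|_{I\times\partial\Omega} = 0$ and $\psi(T) = 0$. Maximal regularity gives $\psi \in L^2(I, H^2(\Omega)\cap H^1_0(\Omega))\cap H^1(I, L^2(\Omega))$, so $A\nabla\psi\cdot n \in L^2(I, H^{1/2}(\partial\Omega))$ and $\psi(0) \in H^1_0(\Omega)$, each with norm controlled by $\|\phi\|_{L^2(I,L^2(\Omega))}$. Integration by parts in space and time yields
\begin{equation*}
    \int_I\!\int_\Omega u\,\phi \;=\; \int_I\!\int_\Omega (Lu)\,\psi \;-\; \int_I\!\int_{\partial\Omega} u\,(A\nabla\psi\cdot n) \;+\; \int_\Omega u(0)\,\psi(0),
\end{equation*}
whence by duality
\begin{equation*}
    \|u\|^2_{L^2(I, L^2(\Omega))} \lesssim \|Lu\|^2_{L^2(I, (H^2\cap H^1_0)^*)} + \|u|_{\partial\Omega}\|^2_{L^2(I, H^{1/2}(\partial\Omega)^*)} + \|u(0)\|^2_{H^1_0(\Omega)^*}.
\end{equation*}

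\emph{Interpolation.} Applying \eqref{eq:operator_interpolation} at $\theta = 1/2$ to $T^{-1}$, together with the Bochner identity \eqref{eq:interpolation_bochner} and the identities \eqref{eq:interp_Hilbert}, \eqref{eq:interp_dual_zero_trace}, \eqref{eq:interpolation_dual} (plus commutation of interpolation with products), the solution norm becomes $L^2(I, H^{1/2}(\Omega))$; on the data side the $Lu$ norm lands in $L^2(I, (H^{3/2}(\Omega)\cap H^1_0(\Omega))^*)$, the boundary norm in $L^2(I, L^2(\partial\Omega))$, and the initial norm in a weak dual of $L^2(\Omega)$ (namely $(H^{1/2}_{00}(\Omega))^*$). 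The embeddings $L^2 \hookrightarrow (H^{3/2}\cap H^1_0)^*$ and $L^2 \hookrightarrow (H^{1/2}_{00})^*$ then bound the right-hand side by $a(u,u)$, as desired.

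The main obstacle is the anisotropic time-space trace theory at the $L^2(I, H^1)$ level: the natural trace space $\mathcal T_1$ carries fractional time regularity (typically $H^{1/4}(I, L^2(\partial\Omega))$ in addition to $L^2(I, H^{1/2}(\partial\Omega))$), and one must arrange the high- and low-regularity estimates so that the $\theta = 1/2$ interpolate of $\mathcal T_1$ with the dual trace collapses cleanly to $L^2(I, L^2(\partial\Omega))$. The abstract identity \eqref{eq:interpolation_dual} applied fiberwise in time via \eqref{eq:interpolation_bochner} is precisely what supplies this cancellation.
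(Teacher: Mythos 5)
Your overall strategy is the same as the paper's: an energy estimate for the inhomogeneous problem, a dual estimate obtained by testing with the solution of the backward adjoint problem and invoking maximal parabolic regularity, and operator interpolation at $\theta=1/2$ via \eqref{eq:operator_interpolation} and \eqref{eq:interpolation_bochner}. The interior and initial terms in your version (dual norms $L^2(I,(H^2(\Omega)\cap H^1_0(\Omega))^*)$ and $H^1_0(\Omega)^*$ instead of the paper's $W^*$ and $L^2(\Omega)$) are harmless variants: they interpolate to spaces into which $L^2$ embeds, so the right-hand side is still controlled by $a(u,u)$. The genuine gap is exactly at the point you flag as the "main obstacle," and your proposed fix does not close it. Your dual estimate measures the lateral boundary term only in $L^2(I,H^{1/2}(\partial\Omega)^*)$, justified by the spatial trace $A\nabla\psi\cdot n\in L^2(I,H^{1/2}(\partial\Omega))$. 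But at the high-regularity level the boundary datum must be taken in the anisotropic trace space $\mathcal T_1=L^2(I,H^{1/2}(\partial\Omega))\cap H^{1/4}(I,L^2(\partial\Omega))$ -- the $H^{1/4}$ time component cannot be dropped, since the inhomogeneous Dirichlet problem is not solvable in $L^2(I,H^1(\Omega))$ with boundary data merely in $L^2(I,H^{1/2}(\partial\Omega))$. Consequently the boundary data space produced by interpolating your two endpoints is $[L^2(I,H^{1/2}(\partial\Omega)^*),\mathcal T_1]_{1/2,2}$, and what you would need to finish is the embedding $L^2(I\times\partial\Omega)\hookrightarrow[L^2(I,H^{1/2}(\partial\Omega)^*),\mathcal T_1]_{1/2,2}$. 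This fails in general: the interpolated space inherits positive fractional time regularity from the $H^{1/4}(I,L^2(\partial\Omega))$ component of $\mathcal T_1$, which is not cancelled by the purely spatial dual at the other endpoint, so the interpolated boundary norm cannot be bounded by the $\|u\|^2_{L^2(I,L^2(\partial\Omega))}$ term in $a(u,u)$. Applying \eqref{eq:interpolation_dual} "fiberwise in time via \eqref{eq:interpolation_bochner}" does not supply the cancellation, because that argument only treats the Bochner pair $L^2(I,H^{1/2}(\partial\Omega)^*)$, $L^2(I,H^{1/2}(\partial\Omega))$, and $\mathcal T_1$ is not of that form.

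The repair is the one the paper uses: strengthen the dual estimate rather than weaken the energy estimate. By the trace theory for maximal-regularity solutions (see \cite{weidemaier2002maximal}), the conormal derivative of the adjoint solution satisfies $A\nabla\psi\cdot n\in\mathcal T_1$ with norm controlled by $\|\phi\|_{L^2(I,L^2(\Omega))}$, so the boundary term in the dual estimate can be measured in $\mathcal T_1^*$, as in \eqref{eq:parabolic_dual_estimate}. Then the boundary data spaces at the two levels are $\mathcal T_1^*$ and $\mathcal T_1$, cf. \eqref{eq:parabolic_energy_estimate}, and the $\theta=1/2$ interpolation collapses exactly to $L^2(I\times\partial\Omega)$ by the abstract theorem on interpolating a space with its dual (\cite{cobos1998theorem}), applied with pivot space $L^2(I\times\partial\Omega)$ rather than fiberwise in time. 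With that single modification your argument matches the paper's proof.
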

\begin{remark}
    In fact, the coercivity holds in the slightly stronger norm of 
    \[
        [L^2(I,L^2(\Omega)), H^1(I,H^1_0(\Omega)^*)\cap L^2(I, H^1(\Omega))]_{1/2,2}.
    \] 
    We omitted this in the statement above for brevity.
\end{remark}
\begin{proof}
    We outline the proof and give details in the appendix. We set 
    \[
        W = H^1(I, L^2(\Omega))\cap L^2(I, H^2(\Omega)\cap H^1_0(\Omega)),
    \] 
    Maximal parabolic regularity for the \emph{adjoint final value} problem implies for $u\in W$ with vanishing final time 
    \begin{align}\label{eq:parabolic_max_regularity}
        \| u \|_{W} 
        \lesssim
        \| -\partial_t u - \nabla\cdot A^T\nabla u \|_{L^2(I,L^2(\Omega))}.
    \end{align}
    Considering a dual problem, see Section~\ref{sec:dual_parabolic} for details,  we estimate for all $u\in H^1(I, L^2(\Omega))\cap L^2(I,H^2(\Omega))$
    \begin{align}\label{eq:parabolic_dual_estimate}
    \begin{split}
        \| u \|_{L^2(I,L^2(\Omega))}
        &
        \lesssim
        \|u(0)\|_{L^2(\Omega)} 
        +
        \| \partial_t u - \nabla \cdot A \nabla u \|_{W^*}
        \\
        &+
        \| u \|_{{(L^2(I, H^{1/2}(\partial\Omega))\cap H^{1/4}(I, L^2(\partial\Omega)))^*}}.
        \end{split}
    \end{align}
    For a reference concerning the regularity on the parabolic boundary we refer to \cite{weidemaier2002maximal}. The 
 energy estimates, see 
    \cite[Section 15.5, page 83]{lions2012non}, show that for $u\in H^1(I, L^2(\Omega))\cap L^2(I,H^2(\Omega))$
\begin{align}\label{eq:parabolic_energy_estimate}
        \begin{split}
            \| u \|_{H^1(I,H^1_0(\Omega)^*)\cap L^2(I, H^1(\Omega))}
            &
            \lesssim
            \|u(0)\|_{L^2(\Omega)}
            \\
            &+
            \| \partial_t u - \nabla\cdot A\nabla u \|_{L^2(I, H^1_0(\Omega))^*}
            \\
            &+
            \|u\|_{L^2(I, H^{1/2}(\partial\Omega))\cap H^{1/4}(I, L^2(\partial\Omega))},
        \end{split}
    \end{align}
    where the differential operator is understood in the weak form. Interpolating \eqref{eq:parabolic_dual_estimate} and \eqref{eq:parabolic_energy_estimate} and using a similar result to \eqref{eq:interpolation_dual} (see \cite{cobos1998theorem} for the abstract statement) for the boundary term yields 
    \begin{align*}
        \|u\|_Z
        &\lesssim
        \|u(0)\|_{L^2(\Omega)}
        +
        \|\partial_t u -\nabla\cdot A\nabla u\|_X
        \\
        &+
        \|u\|_{L^2(I, L^2(\partial \Omega))} ,
    \end{align*}
    where $X$ and $Z$ denote the interpolation spaces
    \begin{align*}
        X
        &=
        [W^*, L^2(I, H^1_0(\Omega)^*)]_{1/2,2}
        \\
        Z
        &=
        [L^2(I,L^2(\Omega)), H^1(I,H^1_0(\Omega)^*)\cap L^2(I, H^1(\Omega))]_{1/2,2}
    \end{align*}
    We can trivially estimate $\|\partial_t u -\nabla\cdot A\nabla u\|_X \lesssim \|\partial_t u -\nabla\cdot A\nabla u\|_{L^2(I,L^2(\Omega))}$. From \eqref{eq:interpolation_bochner}, we also have that $\|u\|_{L^2(I, H^{1/2}(\Omega))} \lesssim \| u \|_{Z}$. This  concludes the proof.
\end{proof}
 
\begin{theorem}[Error Estimate]
   Let $\theta^*$ inexactly minimize $L(\theta) \approx E(u_{\theta})$ where $E$ is given in \eqref{eq:E_parabolic}. Let  $u_{\theta^*}$ be the corresponding neural network. Then, 
   \begin{equation}
      \|u_{\theta^*} - u\|^2_{L^2(I, H^{1/2}(\Omega))} 
    \lesssim L(\theta^*) + \eta(\theta^*),  \label{eq:estimate_parabolic_aposter}
   \end{equation}
   where $\eta(\theta^*) = E(u_{\theta^*}) - L(\theta^*).$ Further, we have that 
\begin{equation}
    \|u_{\theta^*} - u\|^2_{L^2(I, H^{1/2}(\Omega))} 
    \lesssim
    \delta(u_{\theta^*}) 
    +
    \inf_{u_\theta \in \mathcal{F}_\Theta} \|u_{\theta} - u^*\|^2_{H^1(I, L^2(\Omega))\cap L^2(I, H^2(\Omega))},\label{eq:estimate_parabolic}
\end{equation}
where $\delta(u_{\theta^*})$ is given in \eqref{eq:delta_opt}. 
\end{theorem}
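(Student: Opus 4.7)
The plan is to apply the abstract a posteriori bound (Theorem \ref{thm:aposteriori_thm}) and the abstract C\'ea lemma (Theorem \ref{thm:cea}) exactly as in the preceding subsections. Set $X = H^1(I, L^2(\Omega))\cap L^2(I, H^2(\Omega))$ equipped with its natural intersection norm, and take the weaker seminorm
\[
|||u|||_X = \|u\|_{L^2(I, H^{1/2}(\Omega))}.
\]
The coercivity assumption (A1) with this choice of $|||\cdot|||_X$ is exactly the statement of Lemma \ref{eq:coerc_global_parabolic}, so that ingredient is already in hand. The a posteriori bound \eqref{eq:estimate_parabolic_aposter} then follows immediately from Theorem \ref{thm:aposteriori_thm}.

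For the a priori bound \eqref{eq:estimate_parabolic}, I need to verify the continuity assumption (A2) with respect to $\|\cdot\|_X$. The bilinear form $a$ splits into three pieces which I would bound separately. For the interior term, $\|\partial_t u - \nabla\cdot A\nabla u\|_{L^2(I,L^2(\Omega))} \lesssim \|u\|_X$ directly from the boundedness of $A$ (and its derivatives) in $C^{0,1}(\Omega)$. For the lateral boundary term, I would use the spatial trace inequality $\|u(t)\|_{L^2(\partial\Omega)} \lesssim \|u(t)\|_{H^1(\Omega)}$ pointwise in $t$, together with the embedding $L^2(I, H^2(\Omega))\hookrightarrow L^2(I, H^1(\Omega))$. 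For the initial term, I would use the continuous embedding
\[
H^1(I, L^2(\Omega))\cap L^2(I, H^2(\Omega)) \hookrightarrow C(\overline I, H^1(\Omega)) \hookrightarrow C(\overline I, L^2(\Omega)),
\]
so that $\|u(0)\|_{L^2(\Omega)} \lesssim \|u\|_X$. Combining the three bounds via Cauchy--Schwarz gives $a(u,v) \le \|T\|^2 \|u\|_X \|v\|_X$ with $\|T\|$ independent of $\theta$.

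With (A1) and (A2) verified, Theorem \ref{thm:cea} applied with these norms yields
\[
\|u_{\theta^*} - u\|^2_{L^2(I, H^{1/2}(\Omega))} \lesssim \delta(u_{\theta^*}) + \inf_{u_\theta\in\mathcal F_\Theta}\|u_\theta - u\|_X^2,
\]
which is the desired estimate. The main (minor) obstacle is the bookkeeping for the continuity of the initial trace term, since it requires the intersection embedding into $C(\overline I, L^2(\Omega))$ rather than a straightforward Cauchy--Schwarz in space-time; all other pieces of the argument are direct analogues of the elliptic cases already treated in the paper.
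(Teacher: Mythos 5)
Your proposal is correct and follows essentially the same route as the paper: coercivity in $\|\cdot\|_{L^2(I,H^{1/2}(\Omega))}$ from Lemma \ref{eq:coerc_global_parabolic} together with continuity of $a$ on $X=H^1(I,L^2(\Omega))\cap L^2(I,H^2(\Omega))$, then Theorems \ref{thm:aposteriori_thm} and \ref{thm:cea}; your three-term continuity check just spells out the trace constants $C_{\mathrm{tr}_{I\times\partial\Omega}}$ and $C_{\mathrm{tr}_{t=0}}$ the paper invokes. (For the initial term, the simpler embedding $H^1(I,L^2(\Omega))\hookrightarrow C(\overline I,L^2(\Omega))$ already suffices, so even that "obstacle" is mild.)
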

\begin{proof}
    The results follows from applications of Theorems \ref{thm:aposteriori_thm} and \ref{thm:cea}. We verify the  continuity of $a$ on $X=H^1(I,L^2(\Omega))\cap L^2(I,H^2(\Omega))$. Indeed, we have that 
    \begin{equation}
        a(u,v) 
        \leq
        \max(\|A\|^2_{C^{0,1}}, C_{\mathrm{tr}_{I\times\partial\Omega}}^2, C_{\mathrm{tr}_{t=0}}^2)
        \|u\|_{X} \|v\|_{X} 
    \end{equation}
    with $\|u\|_X = \|u\|_{H^1(I,L^2(\Omega))\cap L^2(I,H^2(\Omega))}$.
    To show \eqref{eq:estimate_parabolic} and \eqref{eq:estimate_parabolic_aposter}, we apply Theorems \ref{thm:cea} and \ref{thm:aposteriori_thm} respectively with $|||\cdot |||_{X} = \|\cdot\|_{L^2(I, H^{1/2}(\Omega))}$ where  the  coercivity estimate from Lemma \eqref{eq:coerc_global_parabolic} is used.   
\end{proof}

\subsection{Second-Order Hyperbolic Equations} We consider the initial value problem
\begin{alignat}{2}
    \partial_{tt} u - \nabla \cdot A \nabla u &= f &&  \quad \text{in }I\times\Omega
    \\
    u &= g_D && \quad \text{on }I\times\partial\Omega
    \\
    u(0) &= u_0,  \,\, 
    \partial_t u (0) = u_1 && \quad \text{in }\Omega,
\end{alignat}
where we assume that $\Omega\subset\mathbb R^d$ is of sufficient regularity, the forcing $f \in H^1(I,L^2(\Omega))$, the boundary data $g \in H^1(I, H^{3/2}(\partial\Omega))\cap H^{5/2}(I,L^2(\partial\Omega))$, $u_0 \in H^2(\Omega)$ and $u_1
\in H^{3/2}(\Omega)$ with $g(0) = u_0$ and $\partial_t g(0) = u_1$,  and $A\in C^{0,1}(\Omega, \mathbb R^{d\times d})$ is uniformly elliptic and symmetric. Under these assumptions the equation above has a unique solution of regularity $L^2(I,H^2(\Omega)) \cap H^2(I, L^2(\Omega))$, see \cite[Chapter 5, Theorem 3.1]{lions2012non}. Then, we can properly define the least squares energy: 
\begin{align}
    E(u) \label{eq:E_hyperbolic}
    &=
    \frac12\|\partial_{tt} u - \nabla\cdot A\nabla u - f\|^2_{L^2(I,L^2(\Omega))}
    +
    \frac12\| u - g_D \|^2_{L^2(I,L^2(\partial\Omega))}
    \\
    &+
    \frac12 \|u(0) - u_0\|^2_{L^2(\Omega)} + \frac12 \| \partial_t u(0) - u_1\|^2_{L^2(\Omega)} .  \nonumber
\end{align}
The form $a$ is given by $a:[ L^2(I,H^2(\Omega)) \cap H^2(I,L^2(\Omega))]^2 \to \mathbb R$
\begin{align}
    a(u,v) \label{eq:form_a_hyperbolic}
    &=
    (\partial_{tt} u - \nabla\cdot A\nabla u, \partial_{tt} v - \nabla\cdot A\nabla v)_{L^2(I,L^2(\Omega))}
    \\
    &+
    (u,v)_{L^2(I,L^2(\partial\Omega))}
    +
    (u(0),v(0))_{L^2(\Omega)} + (\partial_t u(0),\partial_t v(0))_{L^2(\Omega)}. \nonumber
\end{align}
Now, we derive the following coercivity estimate.
\begin{lemma}[Coercivity]\label{eq:coerc_global_hyperbolic} Assume we are in the setting above, then for any $u \in  L^2(I,H^2(\Omega)) 
\cap  H^2(I,L^2(\Omega))$ we can estimate
\begin{align}
\|u\|^2_{H_0^{3/4}(I,H^{1/4}(\Omega))^*} +  \|u\|^2_{H_{00}^{1/2}(I,L^2(\Omega))^*} 
    &\lesssim a(u,u) . \label{eq:coerc_hyperbolic}
\end{align}
Here,  $H_{00}^{1/2}(I,L^2(\Omega)) = [L^2(I,L^2(\Omega)),H^{1}_0(I,L^2(\Omega))]_{1/2,2}$ is the Lions--Magenes space. 
\end{lemma}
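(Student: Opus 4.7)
The plan is to mirror the three-step strategy employed for the parabolic equation in Lemma~\ref{eq:coerc_global_parabolic}: combine maximal regularity for the adjoint backward-in-time problem with a dual estimate obtained by integration by parts and a classical hyperbolic energy estimate, and then interpolate to replace the inhomogeneous space-time boundary norms by $L^2(I, L^2(\partial\Omega))$. A new feature compared to the parabolic case is that the second-order hyperbolic equation carries two initial conditions, $u(0)$ and $\partial_t u(0)$, which are penalised separately in $L^2(\Omega)$ by $a$; this is what produces the two distinct dual norms on the left of~\eqref{eq:coerc_hyperbolic}, obtained through two separate interpolation arguments.

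First, I introduce the adjoint test space
\[
 W_0 = \{\varphi \in L^2(I, H^2(\Omega) \cap H^1_0(\Omega)) \cap H^2(I, L^2(\Omega)) : \varphi(T) = \partial_t \varphi(T) = 0\},
\]
and invoke backward-time maximal regularity for the wave equation from \cite[Chapter 5, Theorem 3.1]{lions2012non} to obtain
\[
 \|\varphi\|_{L^2(I,H^2)\cap H^2(I,L^2)} \lesssim \|\partial_{tt}\varphi - \nabla\cdot A\nabla\varphi\|_{L^2(I,L^2(\Omega))}, \qquad \varphi \in W_0.
\]
Next, for $u \in L^2(I, H^2(\Omega)) \cap H^2(I, L^2(\Omega))$ and $\varphi \in W_0$, two integrations by parts in time together with Green's identity in space give
\begin{equation*}
(u, \partial_{tt}\varphi - \nabla\cdot A\nabla\varphi)_{L^2(I\times\Omega)} = (\partial_{tt} u - \nabla\cdot A\nabla u, \varphi)_{L^2(I\times\Omega)} + (\partial_t u(0), \varphi(0))_\Omega - (u(0), \partial_t\varphi(0))_\Omega - (u, A\nabla\varphi\cdot n)_{L^2(I\times\partial\Omega)}.
\end{equation*}
Choosing $\varphi$ so as to realise the supremum defining each of the two target dual norms on the left of \eqref{eq:coerc_hyperbolic} and bounding $\|\varphi\|_{W_0}$ by maximal regularity yields two dual estimates of the schematic form
\[
 \|u\|_\star \lesssim \|\partial_{tt} u - \nabla\cdot A\nabla u\|_{W_0^*} + \|u(0)\|_{L^2(\Omega)} + \|\partial_t u(0)\|_{L^2(\Omega)} + \|u\|_{\mathcal{B}^*},
\]
where $\|\cdot\|_\star$ denotes in turn each of the two Lions--Magenes-type dual norms of \eqref{eq:coerc_hyperbolic}, and $\mathcal{B}$ is the space-time boundary trace space for $W_0$.

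Finally, I would pair these dual estimates with the classical hyperbolic energy estimate controlling $\|u\|_{L^\infty(I, H^1(\Omega)) \cap W^{1,\infty}(I, L^2(\Omega))}$ in terms of the natural energy data (see e.g.~\cite[Chapter 3]{lions2012non}), and interpolate the dual and the energy side via \eqref{eq:operator_interpolation}, \eqref{eq:interpolation_bochner} and the Cobos theorem \cite{cobos1998theorem} applied to the boundary term. This trades the inhomogeneous norm $\|u\|_{\mathcal{B}^*}$ for $\|u\|_{L^2(I, L^2(\partial\Omega))}$ and concludes the proof. The main obstacle will be pinning down the two interpolation exponents so that the specific scales $H^{3/4}_0(I, H^{1/4}(\Omega))$ and $H^{1/2}_{00}(I, L^2(\Omega))$ emerge cleanly: unlike in the parabolic setting, the hyperbolic test space $W_0$ carries a two-dimensional temporal trace at $t=0$, and the separate normalisations of $\varphi(0)$ and $\partial_t\varphi(0)$ are precisely what forces the two distinct dual scales appearing in~\eqref{eq:coerc_hyperbolic}.
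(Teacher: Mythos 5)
Your overall architecture (dual estimate by testing against a backward-in-time wave problem, an energy estimate with inhomogeneous boundary data, then interpolation) is the same as the paper's, but there is a genuine gap at the very first step: the claimed maximal regularity
\begin{equation*}
\|\varphi\|_{L^2(I,H^2(\Omega))\cap H^2(I,L^2(\Omega))} \lesssim \|\partial_{tt}\varphi - \nabla\cdot A\nabla\varphi\|_{L^2(I,L^2(\Omega))}, \qquad \varphi \in W_0,
\end{equation*}
is false. Unlike the parabolic case, the wave equation has no maximal $L^2$-in-time regularity: a forcing in $L^2(I,L^2(\Omega))$ only yields energy-class solutions $C(\bar I,H^1_0(\Omega))\cap C^1(\bar I,L^2(\Omega))$, and to place the adjoint solution in $H^{2,2}(I\times\Omega)$ one must take the forcing in $H^1(I,L^2(\Omega))$ (this is exactly how the paper proceeds in Section~\ref{sec:dual_hyperbolic}, citing \cite[Chapter 5, Theorem 2.1]{lions2012non}). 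As a consequence, the duality argument can only control the weak norm $\|u\|_{H^1(I;L^2(\Omega))^*}$ on the left, as in \eqref{eq:hyper_interp0}; it cannot produce, before any interpolation, "two dual estimates" with the target norms $\|u\|_{H_0^{3/4}(I,H^{1/4}(\Omega))^*}$ and $\|u\|_{H_{00}^{1/2}(I,L^2(\Omega))^*}$ on the left, and choosing $\varphi$ to "realise the supremum" of those norms does not repair this, because the admissible test functions are constrained by the $H^1$-in-time regularity of the adjoint data. Relatedly, your heuristic that the two dual scales are forced by the two initial traces $\varphi(0)$, $\partial_t\varphi(0)$ is not what happens: both target norms arise simultaneously from a single interpolation of the dual bound against the energy bound, via the identity $[H_0^1(I,L^2(\Omega))^*, L^2(I,H^1(\Omega))\cap H^1(I,L^2(\Omega))]_{1/4,2} = H_0^{3/4}(I,H^{1/4}(\Omega))^*\cap H_{00}^{1/2}(I,L^2(\Omega))^*$ of \cite[Chapter 3, Theorem 5.4]{lions2012non}; the intersection on the right reflects the intersection in the energy space, not the two initial conditions.

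The second issue is that the step you defer ("pinning down the interpolation exponents") is actually the crux, and it is not $\theta=1/2$ as one might guess from the parabolic case but $\theta=1/4$. The energy estimate must be taken in the form \eqref{eq:hyper_interp1}, with boundary data measured in $L^2(I,H^{3/2}(\partial\Omega))\cap H^{3/2}(I,L^2(\partial\Omega))$ and initial data in $H^{3/2}(\Omega)\times H^{1/2}(\Omega)$, while the dual estimate carries the boundary in the dual of $L^2(I,H^{1/2}(\partial\Omega))\cap H^{1/2}(I,L^2(\partial\Omega))$; only the choice $\theta = 1/4$ makes $(1-\theta)(-\tfrac12)+\theta\cdot\tfrac32 = 0$, so that the interpolated boundary term is exactly $\|u\|_{L^2(I,L^2(\partial\Omega))}$, and it simultaneously places the interpolated initial terms below $\|u(0)\|_{L^2(\Omega)}$ and $\|\partial_t u(0)\|_{L^2(\Omega)}$ and the interpolated forcing term below $\|\partial_{tt}u-\nabla\cdot A\nabla u\|_{L^2(I,L^2(\Omega))}$, i.e.\ exactly the terms of $a(u,u)$. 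Using an $L^\infty$-in-time energy estimate, as you propose, would additionally complicate the Hilbert-space interpolation; the $L^2(I,H^1(\Omega))\cap H^1(I,L^2(\Omega))$ form with inhomogeneous boundary data is what makes the argument close.
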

\begin{proof}
 Here too, we only outline the proof and give details in the appendix. We set 
     $H^{2,2}(I \times \Omega ) = L^2(I;H^2(\Omega)\cap H^1_0(\Omega)) \cap H^2(I;L^2(\Omega))$ and $H^{1/2,1/2}(I \times \partial \Omega ) =L^2(I, H^{1/2}(\partial \Omega)) \cap H^{1/2}(I,L^2(\partial \Omega))$. We can then show that, see Section~\ref{sec:dual_hyperbolic} for details, 
\begin{multline}
\|u\|_{H^1(I;L^2(\Omega))^*} \lesssim \|\partial_{tt} u - \nabla \cdot(A \nabla u )\|_{H^{2,2}(I \times \Omega)^*} + \|u\|_{H^{1/2,1/2}(\partial I \times \Omega)^*} \\  
+ \|u(0)\|_{H^{3/2}(\Omega)^*}+ \|\partial_t u(0)\|_{H^{1/2}(\Omega)^*}.  \label{eq:hyper_interp0}
\end{multline} 
From \cite[Chapter 3, Theorem 3.2]{lions2012non}, we also have that 
\begin{multline}
    \|u\|_{L^2(I;H^1(\Omega)) \cap H^1(I;L^2(\Omega))} 
    \lesssim
    \|\partial_{tt} u - \nabla \cdot(A \nabla u )\|_{L^2(I, L^2(\Omega))}  \\ + \|u\|_{L^2(I,H^{3/2}(\partial \Omega)) \cap H^{3/2}(I,L^2(\partial \Omega))}    
+ \|u(0)\|_{H^{3/2}(\Omega)}+ \|\partial_t u(0)\|_{H^{1/2}(\Omega)}.  \label{eq:hyper_interp1}
\end{multline}
We interpolate between the above bounds with $\theta = 1/4$. The result follows by invoking the interpolation result in \cite[Chapter 3, Theorem 5.4]{lions2012non} stating that 
\begin{equation*}
[H_0^1(I,L^2(\Omega))^*,L^2(I,H^1(\Omega)) \cap H^1(I,L^2(\Omega))]_{1/4,2}   = H_0^{3/4}(I,H^{1/4}(\Omega))^*\cap H_{00}^{1/2}(I,L^2(\Omega))^*.
\end{equation*}
\end{proof}

\begin{theorem}[Error Estimate]
    Let $\theta^*$ inexactly minimize $L(\theta) \approx E(u_{\theta})$ where $E$ is given in \eqref{eq:E_hyperbolic}. Let  $u_{\theta^*}$ be the corresponding neural network. 
    Then, 
    \begin{equation}
          \|u - u_{\theta^*}\|_{H_0^{3/4}(I,H^{1/4}(\Omega))^*}^2 +  \|u - u_{\theta^*}\|_{H_{00}^{1/2}(I,L^2(\Omega))^*}^2 
        \lesssim  L(\theta^*) + \eta(\theta^*),  \label{eq:estimate_hyperbolic_apost}
    \end{equation}
    where $\eta(\theta^*) = E(u_{\theta^*}) - L(\theta^*)$.
    \begin{multline}
        \|u - u_{\theta^*}\|_{H_0^{3/4}(I,H^{1/4}(\Omega))^*}^2 +  \|u - u_{\theta^*}\|_{H_{00}^{1/2}(I,L^2(\Omega))^*}^2 
        \lesssim 
        \delta(u_{\theta^*}) \\  + \inf_{u_\theta \in \mathcal{F}_\Theta} \|u_{\theta} - u^*\|^2_{ L^2(I, H^2(\Omega)) \cap H^2(I,L^2(\Omega))}, \label{eq:estimate_hyperbolic}
    \end{multline}
    with $\delta(u_{\theta^*})$ given in \eqref{eq:delta_opt}. 
    \end{theorem}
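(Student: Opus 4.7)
The plan is to invoke the abstract a posteriori bound in Theorem~\ref{thm:aposteriori_thm} and the C\'ea-type bound in Theorem~\ref{thm:cea} applied to the bilinear form $a$ in \eqref{eq:form_a_hyperbolic}. I take as the ambient Hilbert space
\begin{equation*}
X = L^2(I,H^2(\Omega)) \cap H^2(I,L^2(\Omega)), \qquad \|u\|_X^2 = \|u\|_{L^2(I,H^2(\Omega))}^2 + \|u\|_{H^2(I,L^2(\Omega))}^2,
\end{equation*}
and as the weaker seminorm the one appearing on the left-hand side of the claimed estimates,
\begin{equation*}
|||u|||_X^2 = \|u\|_{H_0^{3/4}(I,H^{1/4}(\Omega))^*}^2 + \|u\|_{H_{00}^{1/2}(I,L^2(\Omega))^*}^2.
\end{equation*}

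The coercivity hypothesis (A1) with respect to $|||\cdot|||_X$ is precisely the content of Lemma~\ref{eq:coerc_global_hyperbolic}, which is already proved. Applying Theorem~\ref{thm:aposteriori_thm} to $u_{\theta^*} - u^*$ and using $E(u_{\theta^*}) = L(\theta^*) + \eta(\theta^*)$ yields the a posteriori bound \eqref{eq:estimate_hyperbolic_apost} immediately.

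For the a priori bound \eqref{eq:estimate_hyperbolic}, the remaining ingredient is the continuity assumption (A2) of $a$ on $X$. I would expand $a(u,v)$ termwise and estimate each contribution by Cauchy--Schwarz: the PDE residual term is bounded by $C(\|A\|_{C^{0,1}(\Omega)})\|u\|_X \|v\|_X$; the spatial boundary integral by the trace embedding $L^2(I,H^2(\Omega)) \hookrightarrow L^2(I,L^2(\partial\Omega))$; and the two initial terms $u(0)$, $\partial_t u(0)$ by the one-dimensional time-trace $H^2(I,L^2(\Omega)) \hookrightarrow C^1(\bar I,L^2(\Omega))$. Collecting these gives $a(u,v) \leq C \|u\|_X \|v\|_X$ with a constant depending only on $A$ and the trace constants, so (A2) holds. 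Theorem~\ref{thm:cea} then produces \eqref{eq:estimate_hyperbolic}.

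There is no genuine obstacle in this last step: all the technical work -- the maximal regularity estimate \eqref{eq:hyper_interp1}, the dual estimate \eqref{eq:hyper_interp0}, and the identification of the $\theta=1/4$ interpolation space as $H_0^{3/4}(I,H^{1/4}(\Omega))^*\cap H_{00}^{1/2}(I,L^2(\Omega))^*$ -- has been absorbed into Lemma~\ref{eq:coerc_global_hyperbolic}. The present theorem is a routine packaging of the abstract framework, entirely parallel to the parabolic case treated immediately above.
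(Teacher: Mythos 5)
Your proposal is correct and follows essentially the same route as the paper: both apply Theorem~\ref{thm:aposteriori_thm} and Theorem~\ref{thm:cea} with the coercivity of Lemma~\ref{eq:coerc_global_hyperbolic} and verify continuity of $a$ on $X=L^2(I,H^2(\Omega))\cap H^2(I,L^2(\Omega))$ via the spatial trace and the bound $\|u(0)\|_{L^2(\Omega)}+\|\partial_t u(0)\|_{L^2(\Omega)}\lesssim\|u\|_X$. Your justification of the time traces through the elementary embedding $H^2(I,L^2(\Omega))\hookrightarrow C^1(\bar I,L^2(\Omega))$ is a perfectly adequate substitute for the paper's citation of Lions--Magenes.
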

    \begin{proof}
      Estimate \eqref{eq:estimate_hyperbolic_apost} follows from applying Theorem \ref{thm:aposteriori_thm} with the coercivity estimate \eqref{eq:coerc_hyperbolic}.   Estimate \eqref{eq:estimate_hyperbolic} follows from applying Theorem \ref{thm:cea} once we verify the  continuity of $a$ on $X=L^2(I,H^2(\Omega))\cap H^2(I,L^2(\Omega))$. First note that  the trace operator maps continuously from the space $X$ to the space $L^2(I,L^2(\partial \Omega))$ and that 
        \[\|u(0)\|_{L^2(\Omega)}+  \|\partial_t u(0)\|_{L^2(\Omega)} \lesssim \|u\|_{X},  \quad \forall u \in X.\]
    For the above properties, we refer to \cite[Chapter 4, Theorem 2.1]{lions2012non}. Continuity readily follows. We skip the details for brevity. This property and \eqref{eq:coerc_hyperbolic} shows  \eqref{eq:estimate_hyperbolic}.  
 \end{proof}
\begin{remark}[Exact Boundary Values]
If we assume that our ansatz class satisfies the boundary values exactly, i.e., it holds $\mathcal F_{\Theta} \subset L^2(I,H^2(\Omega) \cap H^1_{g_D}(\Omega))$, then the forms given in \eqref{eq:E_hyperbolic} and \eqref{eq:form_a_hyperbolic} are modified to 
 \begin{align*}
     E_0(u) = E(u) - \frac12 \|u-g\|^2_{L^2(I,L^2(\partial \Omega))}, 
    \quad a_0(u,v) = a(u,v) - (u,v)_{L^2(I,L^2(\partial \Omega)}. 
 \end{align*}
 It then suffices to establish coercivity of $a$ over the space $L^2(I,H^2(\Omega) \cap H^1_0(\Omega)) \cap H^2(I,L^2(\Omega))$. To this end, observe that 
 the boundary terms in \eqref{eq:hyper_interp0} and in \eqref{eq:hyper_interp1} are  zero over this space. We can then interpolate with $\theta = 1/2$ and we obtain a coercivity estimate of $a_0$ in $L^2(I, L^2(\Omega))$.  In this case, one obtains an an error estimate in $L^2(I,L^2(\Omega))$: 
 \[
    \|u - u_{\theta^*}\|_{L^2(I,L^2(\Omega))}^2  
    \lesssim
    \delta(u_{\theta^*})  + \inf_{u_\theta \in \mathcal{F}_\Theta} \|u_{\theta} - u^*\|^2_{ L^2(I, H^2(\Omega)) \cap H^2(I,L^2(\Omega))}. 
\]
An a posteriori bound in the above norm also holds in this case. 
\end{remark}

\subsection{Source recovery problem for the Laplacian}\label{sec:inverse_problem}
Before we start considering inverse problems, we need to introduce a space tailored to the problem. This is due to the fact that the existence of minimizers of the problem formulation is not directly clear. Assume we are given a domain $\Omega\subset \mathbb R^d$ with $C^{1,1}$ boundary. Then,  we define the space
\begin{equation}\label{eq:definition_H}
    \mathcal H = \overline{C^\infty(\bar\Omega)}^{\|\cdot\|_{\mathcal H}}, \quad \text{with} \quad \|u\|_{\mathcal H} = \|\Delta u\|_{L^2(\Omega)} + \|u\|_{L^2(\partial\Omega)}.
\end{equation}
\begin{lemma}[Properties of $\mathcal H$]
    The space $\mathcal H$ is a Hilbert space and $\mathcal H\subset H^{1/2}(\Omega)$, meaning that elements of $\mathcal H$ can be identified with $H^{1/2}(\Omega)$ functions.
\end{lemma}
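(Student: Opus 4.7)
The plan is to verify the two assertions separately, exploiting the coercivity estimate already established for Poisson's equation.

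First, I would show that the bilinear form
\begin{equation*}
    \langle u, v\rangle_{\mathcal H} = (\Delta u, \Delta v)_{\Omega} + (u, v)_{\partial\Omega}
\end{equation*}
is an inner product on $C^\infty(\bar\Omega)$. Bilinearity and symmetry are immediate; positive definiteness reduces to the observation that $\Delta u = 0$ in $\Omega$ together with $u|_{\partial\Omega} = 0$ forces $u \equiv 0$ by uniqueness for the Dirichlet problem. Since $\mathcal H$ is by definition the completion of $C^\infty(\bar\Omega)$ with respect to the norm induced by this inner product, it is automatically a Hilbert space.

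Next, for the embedding $\mathcal H \subset H^{1/2}(\Omega)$, I would invoke the coercivity estimate \eqref{eq:first_coercivity} (with $A = I$) which yields, for every $u \in C^\infty(\bar\Omega)$,
\begin{equation*}
    \|u\|_{H^{1/2}(\Omega)} \lesssim \|\Delta u\|_{L^2(\Omega)} + \|u\|_{L^2(\partial\Omega)} = \|u\|_{\mathcal H}.
\end{equation*}
Hence the inclusion $C^\infty(\bar\Omega) \hookrightarrow H^{1/2}(\Omega)$ is continuous with respect to $\|\cdot\|_{\mathcal H}$ and extends uniquely to a bounded linear map $\iota : \mathcal H \to H^{1/2}(\Omega)$. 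To identify $\mathcal H$ with an actual subspace, I need $\iota$ to be injective.

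Injectivity is the main obstacle. Suppose $u \in \mathcal H$ with $\iota(u) = 0$, and let $u_n \in C^\infty(\bar\Omega)$ with $u_n \to u$ in $\mathcal H$; then $u_n \to 0$ in $H^{1/2}(\Omega)$, while $\Delta u_n \to g$ in $L^2(\Omega)$ and $u_n|_{\partial\Omega} \to h$ in $L^2(\partial\Omega)$. Since $u_n \to 0$ in $\mathcal D'(\Omega)$, distributional continuity of $\Delta$ gives $g = 0$. For the boundary limit, I would test against $\varphi \in C^2(\bar\Omega)$ with $\varphi|_{\partial\Omega} = 0$ and apply Green's second identity
\begin{equation*}
    \int_\Omega u_n \, \Delta\varphi \,\mathrm dx - \int_\Omega \varphi \, \Delta u_n \,\mathrm dx = \int_{\partial\Omega} u_n \, \partial_\nu \varphi \,\mathrm dS.
\end{equation*}
The left-hand side tends to $0$ (using $u_n \to 0$ in $L^2(\Omega)$ and $\Delta u_n \to 0$ in $L^2(\Omega)$), while the right-hand side converges to $\int_{\partial\Omega} h \,\partial_\nu \varphi \,\mathrm dS$. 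Since $\{\partial_\nu\varphi : \varphi\in C^2(\bar\Omega),\,\varphi|_{\partial\Omega}=0\}$ is dense in $L^2(\partial\Omega)$ (any smooth boundary function can be extended into $\Omega$ via the signed distance and cutoff to realize it as a normal derivative), we conclude $h = 0$. Thus $\|u_n\|_{\mathcal H} \to 0$, so $u = 0$ in $\mathcal H$, and $\iota$ is injective, completing the identification $\mathcal H \subset H^{1/2}(\Omega)$.
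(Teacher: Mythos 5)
Your proof is correct, and it is in fact somewhat more complete than the paper's own argument. The paper establishes positive definiteness of $(\cdot,\cdot)_{\mathcal H}$ directly from the coercivity estimate $\|u\|_{H^{1/2}(\Omega)}\lesssim\|u\|_{\mathcal H}$ of Lemma~\ref{lemma:coercivity_poisson} (you instead use uniqueness for the Dirichlet problem on $C^\infty(\bar\Omega)$ — equally valid), and then identifies the completion with $H^{1/2}(\Omega)$ functions by noting that an $\mathcal H$-Cauchy sequence has a (weakly) unique $H^{1/2}$ limit. What the paper leaves implicit is exactly the point you isolate: continuity of the extension $\iota:\mathcal H\to H^{1/2}(\Omega)$ gives a well-defined map, but the statement ``elements of $\mathcal H$ can be identified with $H^{1/2}(\Omega)$ functions'' really requires injectivity of $\iota$, i.e.\ that an $\mathcal H$-Cauchy sequence of smooth functions converging to $0$ in $H^{1/2}(\Omega)$ must already converge to $0$ in $\|\cdot\|_{\mathcal H}$. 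Your Green's-identity argument (distributional limit forces $g=0$, duality against normal derivatives forces $h=0$) supplies precisely this, so your route buys a cleaner justification of the identification, at the cost of a slightly longer proof. One small technical wrinkle: for a $C^{1,1}$ boundary the signed-distance construction only produces $\varphi\in C^{1,1}(\bar\Omega)=W^{2,\infty}(\Omega)$, not $C^2(\bar\Omega)$; this is harmless, since Green's second identity holds for test functions in $H^2(\Omega)\cap H^1_0(\Omega)$, and the Neumann traces $\partial_\nu\varphi$ of such functions exhaust $H^{1/2}(\partial\Omega)$, which is dense in $L^2(\partial\Omega)$ — so replace $C^2(\bar\Omega)$ by $H^2(\Omega)\cap H^1_0(\Omega)$ and the argument goes through verbatim.
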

\begin{proof} The norm $\|\cdot\|_{\mathcal H}$ is induced by the bilinear form
    \begin{equation*}
        (u,v)_{\mathcal H} 
        =
        \int_\Omega \Delta u\Delta v \, \mathrm dx + \int_{\partial \Omega}uv \, \mathrm ds.
    \end{equation*}
    As we are in an $H^2$ regular setting,  we know from Lemma \ref{lemma:coercivity_poisson} that
    \begin{equation*}
        \|u\|_{H^{1/2}(\Omega)} \lesssim  \|u\|_{\mathcal H}, 
    \end{equation*}
    which shows that $(\cdot, \cdot)_{\mathcal H}$ is positive definite and hence an inner product. This implies that the abstract completion $\mathcal H$ is a Hilbert space. Further, any Cauchy sequence $(\varphi_n)\subset C^\infty(\bar\Omega)$ with respect to $\|\cdot\|_{\mathcal H}$ is bounded in $H^{1/2}(\Omega)$ and thus has a weakly $H^{1/2}(\Omega)$ convergent subsequence $\varphi_{k} \rightharpoonup u$ in $H^{1/2}(\Omega)$. The limit $u$ must be unique since assuming there are two different limits $u_1$ and $u_2$ for different subsequences leads to contradiction to the $\|\cdot\|_{\mathcal H}$ Cauchy property of $(\varphi_n)$. 
\end{proof}

\begin{remark}
    Note that passing to the completion $\mathcal H$ requires to extend the operator $\Delta$ and $\operatorname{tr}$ to the completion, too. We do this without introducing further notation, understanding that if these operators are applied to general $\mathcal H$ functions it is meant in the sense of a limit. Continuity and linearity are trivially inherited.
\end{remark}

\begin{lemma}
    Define the energy $E:\mathcal H \times L^2(\Omega) \to \mathbb R$ with
    \begin{equation} \label{eq:energy_inverse}
        E(u,f) 
        = 
        \frac12 \| \Delta u + f \|^2_{L^2(\Omega)}
        +
        \frac12 \| u - u_d \|^2_{L^2(\Omega)}
        +
        \frac12 \| u - g \|^2_{L^2(\partial\Omega)}
        +
        \frac{\eta^2}{2}\|f\|^2_{L^2(\Omega)}
    \end{equation}
    for interior observations $u_d \in L^2(\Omega)$, boundary values $g\in L^2(\partial\Omega)$ and a regularization parameter $\eta >0$. Then $E$ has a unique minimizer.
\end{lemma}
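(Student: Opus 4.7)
The plan is to recognize $E$ as a strictly convex, coercive, continuous quadratic functional on the Hilbert space $\mathcal{H}\times L^2(\Omega)$ and conclude by Lax--Milgram (equivalently, by the direct method of the calculus of variations). Writing $E(u,f) = \tfrac12 a((u,f),(u,f)) - \ell(u,f) + \mathrm{const}$ with symmetric bilinear form
\begin{equation*}
a((u,f),(v,h)) = (\Delta u + f,\, \Delta v + h)_\Omega + (u,v)_\Omega + \eta^2(f,h)_\Omega + (u,v)_{\partial\Omega},
\end{equation*}
and continuous linear form $\ell(u,f) = (u_d,u)_\Omega + (g,u)_{\partial\Omega}$, the form $a$ is exactly $\langle T(\cdot),T(\cdot)\rangle_Y$ for the operator $T$ from the motivating inverse example. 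Continuity of $a$ on $\mathcal H\times L^2(\Omega)$ therefore reduces to continuity of $T$, which follows from the definition of $\|\cdot\|_{\mathcal H}$ together with the embedding $\mathcal H \hookrightarrow H^{1/2}(\Omega) \hookrightarrow L^2(\Omega)$ already established in the preceding lemma; continuity of $\ell$ is then immediate by Cauchy--Schwarz.

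The heart of the argument is coercivity: I would establish the existence of $c = c(\eta) > 0$ with $a((u,f),(u,f)) \geq c(\|u\|_{\mathcal H}^2 + \|f\|^2_{L^2(\Omega)})$. Two pieces are immediate, namely $\eta^2\|f\|^2_{L^2(\Omega)} \leq a((u,f),(u,f))$ and $\|u\|^2_{L^2(\partial\Omega)} \leq a((u,f),(u,f))$. For the remaining piece $\|\Delta u\|_{L^2(\Omega)}$ of the $\mathcal H$-norm I would use the triangle inequality
\begin{equation*}
\|\Delta u\|_{L^2(\Omega)} \leq \|\Delta u + f\|_{L^2(\Omega)} + \|f\|_{L^2(\Omega)} \leq (1+\eta^{-1})\sqrt{a((u,f),(u,f))}.
\end{equation*}
Combining these three estimates, together with the equivalence of $\|u\|_{\mathcal H}^2$ with $\|\Delta u\|^2_{L^2(\Omega)} + \|u\|^2_{L^2(\partial\Omega)}$, yields the desired coercivity constant on the product space.

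With continuity and coercivity of the symmetric bilinear form $a$ in hand, Lax--Milgram produces a unique pair $(u^*,f^*)\in \mathcal H\times L^2(\Omega)$ solving the variational equation $a((u^*,f^*),(v,h)) = \ell(v,h)$ for every test pair $(v,h)$, which is precisely the first-order optimality condition for $E$; strict convexity of $E$ then identifies this point as the unique minimizer. The main obstacle, and the only nontrivial step, is the coercivity estimate, and it crucially depends on $\eta > 0$: without Tikhonov regularization the form $a$ fails to control $\|f\|_{L^2(\Omega)}$, since one can send $\|f\|_{L^2(\Omega)}\to\infty$ while compensating with $\Delta u$ so that the PDE residual $\|\Delta u + f\|_{L^2(\Omega)}$ stays bounded, and existence of a minimizer can then fail.
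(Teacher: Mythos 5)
Your proof is correct, but it takes a different route from the paper. The paper argues by the direct method: it bounds $E$ from below via Young's inequality (using $\eta>0$) to get boundedness of a minimizing sequence in $\mathcal H\times L^2(\Omega)$, extracts weakly convergent subsequences, and uses convexity plus norm continuity (hence weak lower semicontinuity) to pass to the limit, with strict convexity giving uniqueness. You instead exploit the quadratic structure: writing $E=\tfrac12 a(\cdot,\cdot)-\ell+\mathrm{const}$ and proving that $a$ is continuous and, crucially, coercive on all of $\mathcal H\times L^2(\Omega)$ --- your chain $\|\Delta u\|_{L^2(\Omega)}\le\|\Delta u+f\|_{L^2(\Omega)}+\|f\|_{L^2(\Omega)}\le(1+\eta^{-1})\sqrt{a((u,f),(u,f))}$, together with the trivial bounds on $\|u\|_{L^2(\partial\Omega)}$ and $\|f\|_{L^2(\Omega)}$, is valid and yields full-norm coercivity for every $\eta>0$ --- so Lax--Milgram (or the Riesz representation for the symmetric form) gives a unique solution of the normal equation, identified with the minimizer by convexity. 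Your version buys something the paper's argument does not state: quantitative coercivity of $a$ in the $\mathcal H\times L^2(\Omega)$ norm (hence well-posedness and stability of the first-order optimality system), and it aligns naturally with the paper's abstract framework where $a=\langle T\cdot,T\cdot\rangle_Y$; note it is also complementary to the paper's later coercivity lemma for this problem, which works in weaker norms and needs $\eta^2>\beta/(1-\beta)$, whereas your bound holds for any $\eta>0$ at the price of $\eta$-degenerate constants. The paper's direct-method proof, on the other hand, is the more flexible template, since it would survive non-quadratic perturbations of the energy where no bilinear form is available. Both arguments hinge on $\eta>0$ exactly as you observe in your closing remark.
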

\begin{proof}
    Let $(u_n), (f_n)$ denote a minimizing sequence, that means $E(u_n, f_n) \to \inf_{u,f}E(u,f)$. We now need to bound $(u_n), (f_n)$ with respect to $\|\cdot\|_{\mathcal H}$ and $\|\cdot\|_{L^2(\Omega)}$. This can be done crucially relying on the fact that $\eta > 0$ by standard arguments involving Young's inequality. One arrives at a bound of the form
    \begin{equation*}
        E(u, f) 
        \geq 
        \varepsilon \|\Delta u\|^2_{L^2(\Omega)} + \varepsilon \|f\|_{L^2(\Omega)}^2 + \tilde \varepsilon \|u\|^2_{L^2(\partial\Omega)} - c,
    \end{equation*}
    for suitable $\varepsilon > 0$ depending on $\eta$ and $\tilde\varepsilon>0$ depending on $\|g\|_{L^2(\partial\Omega)}$. This gives a bound of the form
    \begin{equation*}
        \|\Delta u_n\|^2_{L^2(\Omega)} + \|f_n\|^2_{L^2(\Omega)} + \|u_n\|^2_{L^2(\partial\Omega)} < \tilde c.
    \end{equation*}
    Hence there is a subsequence $u_k\rightharpoonup u^*$ in $\mathcal H$ and $f_k\rightharpoonup f^*$ in $L^2(\Omega)$. We then note that $E$ is convex and norm continuous, hence weakly lower semi-continuous. This yields
    \begin{equation*}
        \inf_{u\in\mathcal H, f\in L^2(\Omega)}E(u,f) = \liminf_{n\to\infty}E(u_n, f_n) \geq E(u^*, f^*) \geq \inf_{u\in\mathcal H, f\in L^2(\Omega)}E(u,f).
    \end{equation*}
    This means that $(u^*,f^*)$ is a minimizer and the fact that $E$ is strictly convex shows that it is the only minimizer.
\end{proof}

Consider the problem of minimizing
\begin{equation*}
    \min_{u\in\mathcal H, f\in L^2(\Omega)} J(u,f) = \frac12 \|u - u_d\|_{L^2(\Omega)}^2 + \frac{\eta^2}{2}\|f\|^2_{L^2(\Omega)}
\end{equation*}
subjected to the PDE constraint
\begin{align*}
    -\Delta u &=f \quad \text{in }\Omega
    \\
    u &=g \quad \text{on }\partial\Omega,
\end{align*}
where $u_d\in L^2(\Omega)$ is a given interior observation and $\eta > 0$ is a penalization parameter. The least squares formulation yields $E:\mathcal H \times L^2(\Omega) \to \mathbb R$
\begin{equation}
    E(u,f) 
    = 
    \frac12 \| \Delta u + f \|^2_{L^2(\Omega)}
    +
    \frac12 \| u - u_d \|^2_{L^2(\Omega)}
    +
    \frac12 \| u - g \|^2_{L^2(\partial\Omega)}
    +
    \frac{\eta^2}{2}\|f\|^2_{L^2(\Omega)}. 
\end{equation}
The discussion above guarantees now that $E$ possesses a unique minimizer in $\mathcal H \times L^2(\Omega)$. The associated form is
\begin{equation*}
    a((u,f), (v, h)) 
    =
    (\Delta u + f, \Delta v + h )_{\Omega} 
    +
    (u,v)_{\partial \Omega} 
    +
    (u,v)_{\Omega} 
    +
    \eta^2 (f,h)_{\Omega}. 
\end{equation*}

\begin{lemma}[Coercivity]  \label{lemma:coercivity_source_recov}
    For a given $\beta \in (0,1)$, assume that $\eta^2 > \beta/(1-\beta)$. There holds 
    \begin{equation}
        a((u,f),(u,f))  
        \gtrsim
        C_1 \beta \|u\|_{H^{1/2}(\Omega)}^2 
        +
        \|u\|^2_{L^2(\Omega)} 
        +
        (1-\beta) \|u\|^2_{L^2(\partial \Omega)} 
        +
        C_3 \|f\|^2_{L^2(\Omega)}, \label{eq:second_coerc_inverse}
    \end{equation}
    with $C_3 = \eta^2 - \beta/(1 - \beta)$.
\end{lemma}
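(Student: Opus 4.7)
The plan is to expand the quadratic form into its four squared terms and then use Young's inequality on the cross term arising from $\|\Delta u + f\|_{L^2(\Omega)}^2$, so that the resulting lower bound still contains a genuine $\|\Delta u\|_{L^2(\Omega)}^2$ contribution which can then be fed into the Poisson coercivity estimate from Lemma \ref{lemma:coercivity_poisson}.

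First I would write
\begin{equation*}
    a((u,f),(u,f)) = \|\Delta u + f\|_{L^2(\Omega)}^2 + \|u\|_{L^2(\partial\Omega)}^2 + \|u\|_{L^2(\Omega)}^2 + \eta^2 \|f\|_{L^2(\Omega)}^2,
\end{equation*}
and then apply Young's inequality to the cross term with the weight chosen so that the coefficient in front of $\|\Delta u\|_{L^2(\Omega)}^2$ comes out to $\beta$. Explicitly, using $2(\Delta u, f)_{\Omega} \geq -(1-\beta)\|\Delta u\|_{L^2(\Omega)}^2 - (1-\beta)^{-1}\|f\|_{L^2(\Omega)}^2$, one gets
\begin{equation*}
    \|\Delta u + f\|_{L^2(\Omega)}^2 \geq \beta\,\|\Delta u\|_{L^2(\Omega)}^2 - \frac{\beta}{1-\beta}\,\|f\|_{L^2(\Omega)}^2.
\end{equation*}
Substituting this into the expansion yields a lower bound of the form
\begin{equation*}
    \beta\,\|\Delta u\|_{L^2(\Omega)}^2 + \|u\|_{L^2(\partial\Omega)}^2 + \|u\|_{L^2(\Omega)}^2 + \Bigl(\eta^2 - \tfrac{\beta}{1-\beta}\Bigr)\|f\|_{L^2(\Omega)}^2,
\end{equation*}
where the coefficient $C_3 = \eta^2 - \beta/(1-\beta)$ is positive precisely under the assumption $\eta^2 > \beta/(1-\beta)$.

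Next I would split the boundary term as $\|u\|_{L^2(\partial\Omega)}^2 = \beta\,\|u\|_{L^2(\partial\Omega)}^2 + (1-\beta)\,\|u\|_{L^2(\partial\Omega)}^2$, keep the $(1-\beta)$ piece aside to match the right-hand side of \eqref{eq:second_coerc_inverse}, and combine the $\beta$ share with $\beta\,\|\Delta u\|_{L^2(\Omega)}^2$. By the Poisson coercivity estimate in Lemma \ref{lemma:coercivity_poisson} applied with $A = I$, we have $C_1\,\|u\|_{H^{1/2}(\Omega)}^2 \leq \|\Delta u\|_{L^2(\Omega)}^2 + \|u\|_{L^2(\partial\Omega)}^2$, so that
\begin{equation*}
    \beta\bigl(\|\Delta u\|_{L^2(\Omega)}^2 + \|u\|_{L^2(\partial\Omega)}^2\bigr) \geq C_1 \beta\,\|u\|_{H^{1/2}(\Omega)}^2,
\end{equation*}
and collecting all terms gives exactly \eqref{eq:second_coerc_inverse}.

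There is no real obstacle here; the only delicate point is choosing the Young parameter correctly, since writing $2(\Delta u, f)_{\Omega} \geq -\beta\,\|\Delta u\|_{L^2(\Omega)}^2 - \beta^{-1}\|f\|_{L^2(\Omega)}^2$ would produce the wrong constant on $\|f\|_{L^2(\Omega)}^2$ and fail to match the stated $C_3$. Passing the inequality through the Poisson coercivity estimate in the correct direction (needing a $\|\Delta u\|_{L^2(\Omega)}^2$ term with coefficient $\beta$, not $1-\beta$) dictates the weight. The density-based extension from $C^\infty(\overline{\Omega})$ to $\mathcal{H}$ is automatic because all operators involved are continuous on $\mathcal{H}$ by construction.
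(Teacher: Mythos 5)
Your proof is correct and follows essentially the same route as the paper: expand the form, absorb the cross term $2(\Delta u,f)_\Omega$ via Young's inequality with weight $1-\beta$ (the paper's $\epsilon=1-\beta$), split the boundary term into $\beta$ and $1-\beta$ shares, and apply the Poisson coercivity estimate \eqref{eq:coerc_global_poisson} to the $\beta$ share, yielding exactly $C_3=\eta^2-\beta/(1-\beta)$. The remark on extending from $C^\infty(\overline\Omega)$ to $\mathcal H$ by density is also consistent with how the paper treats $\Delta$ and the trace on $\mathcal H$.
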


\begin{proof}
    We have that  
    \begin{align*}
        a((u,f),(u,f)) - 2(f,\Delta u) 
        \geq
        \| \Delta u\|_{L^2(\Omega)}^2 
        +
        \|u\|_{L^2(\Omega)}^2  
        +
        \|u\|^2_{L^2(\partial \Omega)}
        +
        (\eta^2+1) \|f\|_{L^2(\Omega)}^2. 
    \end{align*}
    With Cauchy--Schwarz and Young's inequalities, we estimate for $\epsilon > 0$:
    \begin{align*}
        2|(f,\Delta u)| 
        \leq
        \epsilon \|\Delta u \|_{L^2(\Omega)}^2 
        +
        \frac{1}{\epsilon} \|f\|^2_{L^2(\Omega)}  
    \end{align*}
    It then follows that 
    \begin{multline}
        a((u,f),(u,f)) 
        \geq
        (1-\epsilon) \left( \|\Delta u \|_{L^2(\Omega)}^2 
        +
        \|u\|^2_{L^2(\partial \Omega)}  \right) 
        \\ + 
        \|u\|^2_{L^2(\Omega)} 
        +
        \epsilon \|u\|_{L^2(\partial \Omega) }^2 
        +
        (\eta^2 - \frac{1}{\epsilon} + 1) \|f\|_{L^2(\Omega)}^2.
    \end{multline}
    Now, choose  $\epsilon = 1-\beta $.  Using \eqref{eq:coerc_global_poisson} gives \eqref{eq:second_coerc_inverse}.
\end{proof}

The above results directly yield error estimates.
\begin{theorem}[Error Estimates] 
Let  $(u, f ) \in \mathcal H \times L^2(\Omega)$ be the exact solution and let $\theta^* = (\theta_1^*, \theta_2^*) $ inexactly minimize    $L( \theta) = L(\theta_1, \theta_2) \approx E(u_{\theta_1}, f_{\theta_2})$ where $E$ is given in \eqref{eq:energy_inverse}. Let  $\bm u_{ \theta_1^*}$ and $f_{\theta_2^*}$ be the corresponding neural networks. Let  $\eta$, $\beta$, $C_1$ and $C_3$ be as in Lemma \ref{lemma:coercivity_source_recov}. Then, 
\begin{equation} \label{eq:aposter_source_recov}
     C_1 \beta \|u_{\theta_1^*} - u  \|_{H^{1/2}(\Omega)}^2  
        +
     (1-\beta)   \|u_{\theta_1^*} - u \|_{L^2(\partial \Omega)}^2 
        +
       C_3  \|f_{\theta_2^*} - f \|_{L^2(\Omega)}^2 
        \lesssim L(\theta^*) + \eta(\theta^*), 
\end{equation}
where $\eta(\theta^*) = E((u_{\theta_1^*}, f_{\theta_2^*})) - L(\theta_1^*, \theta_2^*)$. Further, we have that  
    \begin{multline} \label{eq:aprior_source_recov}
        C_1 \beta \|u_{\theta_1^*} - u  \|_{H^{1/2}(\Omega)}^2  
        +
     (1-\beta)   \|u_{\theta_1^*} - u \|_{L^2(\partial \Omega)}^2 
        +
       C_3  \|f_{\theta_2^*} - f \|_{L^2(\Omega)}^2 
        \lesssim
        \delta(( u_{\theta_1^*}, f_{\theta_2^*}))  
        \\ +
        \inf_{(u_{\theta_1}, f_{\theta_2}) \in \mathcal{F}_{\Theta_1} \times \mathcal{F}_{\Theta_2}}
 ( \|u_{\theta_1} - u \|_{\mathcal H}^2 
        +
        \| f_{\theta_2} - f \|^2_{L^2(\Omega)}),   
    \end{multline}
    where $\delta(( u_{\theta_1^*}, f_{\theta_2^*})) =E(u_{\theta_1^*}, f_{\theta_2^*}) - \inf_{(\bm u_{\psi_1}, f_{\psi_2} ) \in \mathcal{F}_{\Theta_1} \times \mathcal{F}_{\Theta_2} } E(u_{\psi_1} , f_{\psi_2} ) $
\end{theorem}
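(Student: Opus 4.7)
The plan is to deduce both estimates as direct applications of the abstract Theorems~\ref{thm:aposteriori_thm} and \ref{thm:cea} with the choice $X = \mathcal H \times L^2(\Omega)$, semi-norm
\[
|||(u,f)|||_X^2 = C_1 \beta \|u\|_{H^{1/2}(\Omega)}^2 + (1-\beta)\|u\|_{L^2(\partial\Omega)}^2 + C_3 \|f\|_{L^2(\Omega)}^2,
\]
and full norm $\|(u,f)\|_X^2 = \|u\|_{\mathcal H}^2 + \|f\|_{L^2(\Omega)}^2$. The coercivity assumption (A1) with $\alpha$ absorbed into the constants in $|||\cdot|||_X$ is exactly the content of Lemma~\ref{lemma:coercivity_source_recov}, so the a posteriori bound \eqref{eq:aposter_source_recov} follows from Theorem~\ref{thm:aposteriori_thm} without further work (up to dropping the nonnegative $\|u\|_{L^2(\Omega)}^2$ term from the right-hand side of \eqref{eq:second_coerc_inverse}).

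For the a priori bound \eqref{eq:aprior_source_recov}, the remaining task is to verify continuity (A2) of $a$ on $\mathcal H \times L^2(\Omega)$ with the norm $\|\cdot\|_X$. The terms $(\Delta u + f,\Delta v + h)_\Omega$, $(u,v)_{\partial\Omega}$ and $\eta^2(f,h)_\Omega$ are bounded by Cauchy--Schwarz directly in $\|\cdot\|_{\mathcal H}$ and $\|\cdot\|_{L^2(\Omega)}$ since both $\|\Delta u\|_{L^2(\Omega)}$ and $\|u\|_{L^2(\partial\Omega)}$ are already components of $\|u\|_{\mathcal H}$. The only interior observation term $(u,v)_\Omega$ requires an additional step: one invokes the embedding $\mathcal H \hookrightarrow H^{1/2}(\Omega) \hookrightarrow L^2(\Omega)$ established in the preceding lemma, which gives $(u,v)_\Omega \le \|u\|_{L^2(\Omega)}\|v\|_{L^2(\Omega)} \lesssim \|u\|_{\mathcal H}\|v\|_{\mathcal H}$. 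Summing the four contributions yields continuity with a constant $\|T\|^2$ depending on $\eta$ and on the embedding constant.

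With coercivity and continuity in hand, Theorem~\ref{thm:cea} immediately produces \eqref{eq:aprior_source_recov}, with the infimum taken over $\mathcal F_{\Theta_1}\times\mathcal F_{\Theta_2}\subset \mathcal H\times L^2(\Omega)$ and the $\delta$ term as defined in \eqref{eq:delta_opt}. No further manipulation is needed because the structure of the statement mirrors that of the other PINN error theorems in this section.

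The only subtlety, and therefore the step I expect to require the most care, is the continuity verification for the observation term $(u,v)_\Omega$: the $\mathcal H$-norm measures only $\Delta u$ in the interior and $u$ on the boundary, so one genuinely needs the embedding $\mathcal H \hookrightarrow H^{1/2}(\Omega)$ from Lemma (Properties of $\mathcal H$) to control $\|u\|_{L^2(\Omega)}$. Once this is observed the argument is entirely parallel to the elliptic case treated earlier in Theorem~\ref{thm:error_estimate_poisson}.
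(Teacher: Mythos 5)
Your proposal is correct and follows essentially the same route as the paper: both deduce the a posteriori bound from Theorem~\ref{thm:aposteriori_thm} with the coercivity of Lemma~\ref{lemma:coercivity_source_recov}, and the a priori bound from Theorem~\ref{thm:cea} after checking continuity of $a$ on $\mathcal H\times L^2(\Omega)$, where the paper likewise invokes the embedding $\mathcal H\hookrightarrow L^2(\Omega)$ (its constant $C_{\mathcal H}$) to handle the observation term $(u,v)_\Omega$. Your write-up merely makes the continuity verification more explicit than the paper's one-line remark.
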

\begin{proof}
For \eqref{eq:aposter_source_recov}, we apply Theorem \ref{thm:aposteriori_thm} where the coercivity estimate \eqref{eq:second_coerc_inverse} is used. Further, it can easily be seen that $a$ is continuous with respect to the norm of $\mathcal H \times L^2(\Omega)$ with constant $\max(1+\eta^2, C_{\mathcal H})^2$ where $C_{\mathcal H}$ is the continuity constant of the embedding $\mathcal H \hookrightarrow L^2(\Omega)$. Applying Theorem \ref{thm:cea} yields \eqref{eq:aprior_source_recov}. 
\end{proof}
\section{Numerical Experiments}\label{sec:numerics}
In this Section we present three or four dimensional numerical illustrations of the previously analyzed equations. 
Utilizing recent advances in optimization schemes for PINNs \cite{muller2023achieving}, we are able to efficiently and accurately solve the non-convex optimization problems resulting from the PINN formulation, in contrast to the notorious difficulties reported in the literature \cite{wang2021understanding, krishnapriyan2021characterizing, wang2022and}. In our experiments, computation times for a single PDE solve lie in the range of 1 - 5 minutes on a standard GPU-equipped Laptop.

\subsection{Experiment Details}
We numerically illustrate the previously analyzed equations using the PINN formulation for a manufactured solution setting, see Table~\ref{table:manufactured_soltuions} for the solutions we are using and Table \ref{table:errors_1} and \ref{table:errors_2} for the achieved errors. We use shallow neural networks with the $\tanh$ activation function as an ansatz. The optimization is carried out using the natural gradient method proposed in \cite{muller2023achieving} which we complement with a Levenberg-Marquardt type regularization, meaning the update direction $d_{k+1}$ is
\[
    d_{k+1} = [G(\theta_k) + \mu_k \operatorname{Id}]^{-1}\nabla L(\theta_k),
\]
and the update is computed as
\[
    \theta_{k+1} = \theta_k - \alpha_k d_{k+1}
\]
with a suitable chosen step-size $\alpha_k > 0$. By $G(\theta_k)$ we denote the Gram (or Fisher) matrix, see \cite{muller2023achieving}, $\mu_k > 0$ is a damping parameter and $\nabla L(\theta_k)$ denotes the gradient of the loss for neural network parameters $\theta$. For all our experiments we set 
\[
    \mu_k = \min(L(\theta_k), 1e-05)
\]
and optimize for 500 steps. The integral discretization is carried out using roughly 1000 uniformly drawn collocation points on the interior of $\Omega$ or $I\times\Omega$ and roughly 100 uniformly drawn points on the (parabolic) boundary $\partial\Omega$ or $I\times\partial\Omega \cup \{0\}\times\Omega$. These points are drawn once and kept fixed during the optimization process. The errors are evaluated at 10000 randomly drawn points that do not coincide with the training points.

\subsection{Implementation Details}
For the implementation we rely on the JAX framework \cite{jax2018github}. The experiemtns were run on a NVIDIA RTX 3080 Laptop GPU in double precision. Once the preprint is accepted for publication, the code to reproduce the experiments will be made publically available on Github.

\subsection{Discussion}
As documented in Tables \ref{table:errors_1} and \ref{table:errors_2} we achieve highly accurate solutions with moderate computing times of 1 to 5 minutes per PDE solve, depending on the equation. The employed networks are small and simple -- which we regard an advantage of the neural network ansatz class. In the appendix, we provide further experiments with larger networks and computational budget and observe significantly improved errors, compare to Table \ref{table:errors_accurate_1} and \ref{table:errors_accurate_2}.

Common optimization algorithms in the PINN literature are Adam or BFGS. As is well documented in the literature, these optimizers struggle to produce highly accurate solutions and typically need a large number of iterations and computation time, even for mediocre errors \cite{wang2021understanding, krishnapriyan2021characterizing, wang2022and}. Furthermore, laborious hyperparameter tuning, such as carefully weighting different contributions in the loss function \cite{wang2021understanding}, adaptive sampling schemes \cite{wu2023comprehensive} and learning rate schedules can be required for successful training. Our numerical experiments on the other hand, did not require any hyperparameter tuning. Instead, we are using the exact same network architecture, integral discretization and optimization algorithm for all considered equations. Crucial for the success of the numerical results is the recently proposed natural gradient method \cite{muller2023achieving}.     

\begin{table}[h!]
\begin{tabular}{|c|c|c|c|} \hline 
    PDE        &  $w$  & $L^2$ error & $H^1$ error       \\  \hline  \hline 
    Poisson    &  64   & $7.02e-05$  & $5.78e-04$        \\  \hline 
    Elasticity &  64   & $7.34e-04$  & $1.94e-02$        \\  \hline 
    Parabolic  &  64   & $1.82e-03$  & $1.54e-02$        \\  \hline 
    Hyperbolic &  64   & $4.41e-04$  & $3.81e-03$        \\  \hline
\end{tabular}    
\caption{Computed $L^2$ and $H^1$ errors of the PINN formulation using a $\tanh$ shallow neural network of width $w$. The manufactured solutions considered are given in Table~\ref{table:manufactured_soltuions} }
\label{table:errors_1}
\end{table}

\begin{table}[h!]
\begin{tabular}{|c|c|c|c|c|} \hline 
    PDE                & $w_1$ & $w_2$    & $L^2$  & $H^1$    \\  \hline  \hline 
 \multirow{2}{*}{Stokes}  & \multirow{2}{*}{32}    &  \multirow{2}{*}{32}        &  \multirow{2}{*}{$\bm u:\,\, 1.84e-04$}   & $\bm u:\,\, 2.91e-03$    \\   
                  &     &        &    & $p: \,\, 2.20e-02$    \\  \hline 
    \multirow{2}{*}{Transient Stokes}  & \multirow{2}{*}{32}    &  \multirow{2}{*}{32}        &  \multirow{2}{*}{$\bm u:\,\,  1.41e-04$}   & $\bm u:\,\, 2.30e-03$    \\   
                  &     &        &    & $p: \,\, 1.59e-02$    \\  \hline 
 \multirow{2}{*}{Darcy}  & \multirow{2}{*}{32}    &  \multirow{2}{*}{32}        &  $\bm \sigma:\,\, 1.34e-03 $  & \multirow{2}{*}{}\\   
 &     &        &  $p:\,\, 1.13e-04 $  &     \\  \hline    
\multirow{2}{*}{Inverse}  & \multirow{2}{*}{32}    &  \multirow{2}{*}{32}        &  $\bm u:\,\, 3.16e-03 $  & \multirow{2}{*}{$\bm u: \,\, 4.73e-02 $ }  \\   
                  &     &        &  $f: \,\, 5.66e-02 $  &     \\  \hline    \end{tabular}    
\caption{Computed $L^2$ and $H^1$ errors of the PINN formulation using a $\tanh$ shallow neural network of width $w$. The manufactured solutions considered are given in Table~\ref{table:manufactured_soltuions}. The  error in the pressure for the steady and transient Stokes equations is measured in the $H^1(\Omega)$ and $L^2(I,H^1(\Omega))$ semi-norms, respectively.}
\label{table:errors_2}
\end{table}

\begin{table}[h!]
\setlength{\tabcolsep}{10pt} 
\renewcommand{\arraystretch}{1.5} 
\begin{tabular}{|c|c|} \hline 
     & Manufactured solutions \\  \hline  \hline 
    Poisson  &  $\sin(\pi x)  \sin(\pi y) \sin(\pi z) $ \\  \hline 
   Elasticity & $(x^2 + 1)(y^2+1)(z^2+1)e^{x+y+z} \bm 1 $  \\  \hline 
Darcy & $ \bm \sigma = \nabla p  $, $p=\sin(\pi x)  \sin(\pi y) \sin(\pi z)  $ \\ \hline
    Parabolic  &$e^{-\pi^2 t/4} (\cos(\pi x)+ \cos(\pi y) + \cos(\pi z)) $ \\  \hline 
    Hyperbolic  &$\sin(\pi t) \sin(\pi x)  \sin(\pi y) \sin(\pi z) $ \\ \hline
\multirow{2}{*}{Stokes} & $\bm u = \nabla \times (x^2y^2(x-1)^2(y-1)^2 , 0 , 0) $ \\   
 &  $p = xyz(1-x)(1-y)(1-z)$ \\ \hline 
Transient Stokes & $e^{-t/2} \bm u, \,\, e^{-t/2} p $\\  
\hline 
\end{tabular}    
\caption{Manufactured solutions for the various equations. For Poisson, Darcy, parabolic, and hyperbolic equations, we let $A = I$. For Elasticity, $\mathbb C (\epsilon) = \lambda \mathrm{tr}(\epsilon) I  + 2 \mu \epsilon$, $\lambda = 0.5769$, $\mu = 0.3846$.   }  
\label{table:manufactured_soltuions}
\end{table}
\section{Conclusions} We propose an abstract framework for the error analysis of PINNs for linear PDEs. Our results show that every neural network that sufficiently minimizes the PINN loss, is a good approximation to the PDEs solution. This is a stronger statement than proving the mere existence of a neural network that is capable of approximating the solution. Our framework relies on the coercivity and continuity properties of a suitably defined bilinear form and leads to a priori and  a posteriori estimates of PINN formulations.  We utilize this framework to analyze the convergence of PINNs for elliptic PDEs (Poisson and Elasticity), Darcy's equation, a source recovery problem, Stokes system, parabolic, and hyperbolic equations. All the proofs follow a common structure based on energy estimates, dual estimates derived from regularity properties, and interpolation. Numerical tests for all the analyzed PDEs show that PINNs, with new optimization algorithms, can provide accurate solutions efficiently. Future work can concentrate on extending the framework to nonlinear problems.  

\section{Acknowledgments}  We gratefully acknowledge valuable discussions with Prof.\,Siddhartha Mishra. 
\section{Appendix} Here, we present proofs for the dual estimates that we used throughout the paper.
\subsection{Dual  Estimates for Elliptic Equations}\label{sec:poisson_negative}

\begin{lemma}[Dual  Estimate for Laplacian]
Let $\Omega \subset \mathbb R^d$ be $H^2(\Omega)$ regular\footnote{This means that $-\Delta:H^2(\Omega)\cap H^1_0(\Omega) \to L^2(\Omega)$ is a linear homeomorphism.}. Then, for all $u\in H^2(\Omega)$ we may estimate 
\begin{equation*}
    \| u \|_{L^2(\Omega)} 
    \lesssim
     \| \Delta u \|_{(H^2(\Omega)\cap H^1_0(\Omega))^*} + \| u \|_{H^{1/2}(\partial\Omega)^*}.
\end{equation*}

\end{lemma}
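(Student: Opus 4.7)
The plan is to use a standard duality argument together with the $H^2$-regularity hypothesis on $\Omega$ and two integrations by parts. By definition of the $L^2$ norm via duality, I would write
\[
\|u\|_{L^2(\Omega)} = \sup_{\|\varphi\|_{L^2(\Omega)}\leq 1}\int_\Omega u\varphi\,\mathrm dx,
\]
and for each test function $\varphi$ introduce the adjoint problem: find $w \in H^2(\Omega)\cap H^1_0(\Omega)$ with $-\Delta w = \varphi$ in $\Omega$ and $w = 0$ on $\partial\Omega$. The $H^2$-regularity assumption provides the bound $\|w\|_{H^2(\Omega)} \lesssim \|\varphi\|_{L^2(\Omega)}$.

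The second step is a Green-type identity. Since $w \in H^2(\Omega)\cap H^1_0(\Omega)$ and $u \in H^2(\Omega)$, two integrations by parts yield
\[
\int_\Omega u\varphi\,\mathrm dx = -\int_\Omega u\Delta w\,\mathrm dx = -\int_\Omega (\Delta u)\, w\,\mathrm dx - \int_{\partial\Omega} u\,\partial_n w\,\mathrm ds,
\]
where the boundary term involving $w$ itself vanishes due to $w|_{\partial\Omega} = 0$. This is the decomposition that produces exactly the two norms on the right-hand side of the claim.

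The third step is to estimate each piece. For the interior term, using that $w \in H^2(\Omega)\cap H^1_0(\Omega)$ as a legitimate test element together with the $H^2$-regularity bound gives
\[
\Bigl|\int_\Omega (\Delta u)\, w\,\mathrm dx\Bigr| \leq \|\Delta u\|_{(H^2(\Omega)\cap H^1_0(\Omega))^*} \|w\|_{H^2(\Omega)} \lesssim \|\Delta u\|_{(H^2(\Omega)\cap H^1_0(\Omega))^*}\|\varphi\|_{L^2(\Omega)}.
\]
For the boundary term I would invoke the trace theorem to control $\|\partial_n w\|_{H^{1/2}(\partial\Omega)} \lesssim \|w\|_{H^2(\Omega)} \lesssim \|\varphi\|_{L^2(\Omega)}$, which pairs against $u$ through the $H^{1/2}(\partial\Omega)^*$ norm. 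Summing the two estimates and taking the supremum over $\varphi$ finishes the proof.

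The only subtlety I anticipate is justifying the boundary integral rigorously for merely $H^2$ functions, which is handled by the standard Green's identity for $H^2$ functions against $H^2$ functions (so all traces appearing lie in $H^{1/2}(\partial\Omega)$ and the dual pairing is well defined). Everything else is a direct consequence of $H^2$-regularity and the trace theorem.
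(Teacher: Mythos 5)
Your proof is correct and is essentially the paper's argument: the paper packages the same duality via the adjoint of the homeomorphism $-\Delta:H^2(\Omega)\cap H^1_0(\Omega)\to L^2(\Omega)$, which is exactly your step of solving $-\Delta w=\varphi$ with $\|w\|_{H^2(\Omega)}\lesssim\|\varphi\|_{L^2(\Omega)}$, followed by the same double integration by parts and the same trace bound $\|\partial_n w\|_{H^{1/2}(\partial\Omega)}\lesssim\|w\|_{H^2(\Omega)}$. No gaps to report.
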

\begin{proof}
    By assumption we have that 
    \begin{equation*}
        -\Delta : H^2(\Omega)\cap H^1_0(\Omega) \to L^2(\Omega), \quad u\mapsto -\Delta u
    \end{equation*}
    is a linear homeomorphism. Specifically this implies
    \begin{equation}\label{eq:reg_estimate_local_ref}
        \|u\|_{H^2(\Omega)} 
        \leq 
        C_{\textrm{reg}}\cdot \|\Delta u \|_{L^2(\Omega)},
    \end{equation}
    where $C_{\textrm{reg}} = \|(-\Delta)^{-1}\|_{\mathcal L(L^2(\Omega), H^2(\Omega)\cap H^1_0(\Omega))}$. Then also the map
    \begin{equation*}
        T:L^2(\Omega) \overset{R}{\longrightarrow} L^2(\Omega)^* \overset{(-\Delta)^{*}}{\longrightarrow} (H^2(\Omega)\cap H_0^1(\Omega))^* 
    \end{equation*}
    is a linear homeomorphism, where we compose the Riesz isomorphism $R$ of $L^2(\Omega)$ and the Banach space adjoint of $-\Delta$. This means we can estimate
    \begin{equation}\label{eq:reg_estimate_local_ref_2}
        \| u \|_{L^2(\Omega)} 
        \leq 
        C_{\textrm{reg}}\cdot \|Tu\|_{(H^2(\Omega)\cap H_0^1(\Omega))^*}.
    \end{equation}
    Note that $C_{\textrm{reg}}$ in \eqref{eq:reg_estimate_local_ref} and \eqref{eq:reg_estimate_local_ref_2} are the same constant, which is due to the isometry $T\mapsto T^*$. Now we assume that $u\in H^2(\Omega)$ and compute for $\varphi\in H^2(\Omega)\cap H^1_0(\Omega)$
    \begin{align*}
        \langle Tu, \varphi \rangle
        =
        -\int_\Omega u \Delta \varphi \ \mathrm dx 
        &=
        \int_\Omega \nabla u \nabla \varphi \ \mathrm dx - \int_{\partial\Omega} u \partial_n \varphi \ \mathrm ds
        \\ 
        &=
        -\int_\Omega \Delta u \varphi \ \mathrm dx - \int_{\partial\Omega} u \partial_n \varphi \ \mathrm ds
    \end{align*}
    Hence, we can estimate, using that $\partial_n\varphi \in H^{1/2}(\partial\Omega)$
    \begin{equation*}
        \| Tu \|_{(H^2(\Omega)\cap H_0^1(\Omega))^*}
        \leq 
        \| \Delta u \|_{(H^2(\Omega)\cap H_0^1(\Omega))^*} + \|u\|_{H^{1/2}(\partial\Omega)^*}.
    \end{equation*}
    Combining with \eqref{eq:reg_estimate_local_ref_2} concludes the proof.
\end{proof}

\begin{lemma}[Dual  Estimate for Elasticity]
Let $\Omega \subset \mathbb R^d$ be $H^2(\Omega)^d$ regular and assume the same data regularity as in Section~\ref{sec:elasticity}. Then, for all $\bm u\in H^2(\Omega)^d$ we may estimate 
\begin{equation*}
    \| \bm u \|_{L^2(\Omega)} 
    \lesssim 
    \| \nabla\cdot \mathbb C \varepsilon(\bm u) \|_{(H^2(\Omega)\cap H^1_0(\Omega))^*} + \| \bm u \|_{H^{1/2}(\partial\Omega)^*}. 
\end{equation*}
\end{lemma}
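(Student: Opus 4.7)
The plan is to mirror the proof of the preceding dual estimate for the Laplacian verbatim, with the elasticity operator $-\nabla\cdot \mathbb{C}\varepsilon(\cdot)$ playing the role of $-\Delta$. Two structural ingredients of that argument carry over unchanged: first, the assumed $H^2(\Omega)^d$-regularity means that $-\nabla\cdot \mathbb{C}\varepsilon(\cdot) : H^2(\Omega)^d\cap H^1_0(\Omega)^d \to L^2(\Omega)^d$ is a linear homeomorphism; second, by the symmetry condition $\mathbb{C}A:B = \mathbb{C}B:A$ this operator is formally self-adjoint in the $L^2$ pairing, which is exactly what permits a double integration by parts to produce a single, tractable boundary term.

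Concretely, I would compose the Banach adjoint of $-\nabla\cdot \mathbb{C}\varepsilon(\cdot)$ with the Riesz isomorphism of $L^2(\Omega)^d$ to obtain a linear homeomorphism $T : L^2(\Omega)^d \to (H^2(\Omega)^d\cap H^1_0(\Omega)^d)^*$, giving
\[
\|\bm u\|_{L^2(\Omega)} \lesssim \|T\bm u\|_{(H^2(\Omega)\cap H^1_0(\Omega))^*}
\]
exactly as in the Laplacian case. Then, for a test function $\bm\varphi \in H^2(\Omega)^d\cap H^1_0(\Omega)^d$, I would unfold the pairing $\langle T\bm u, \bm\varphi\rangle = -\int_\Omega \bm u\cdot \nabla\cdot \mathbb{C}\varepsilon(\bm\varphi)\,\mathrm{d}x$ by integrating by parts twice. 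The first integration, combined with the symmetry of $\mathbb{C}$, yields the symmetric bulk pairing $\int_\Omega \mathbb{C}\varepsilon(\bm u):\varepsilon(\bm\varphi)\,\mathrm{d}x$ together with a traction boundary term of the form $-\int_{\partial\Omega}\bm u\cdot (\mathbb{C}\varepsilon(\bm\varphi))\bm n\,\mathrm{d}s$. The second integration moves the differentiation back onto $\bm u$, producing the bulk term $-\int_\Omega\nabla\cdot\mathbb{C}\varepsilon(\bm u)\cdot\bm\varphi\,\mathrm{d}x$; the boundary contribution it would generate vanishes since $\bm\varphi = 0$ on $\partial\Omega$.

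What is left is the estimate
\[
|\langle T\bm u, \bm\varphi\rangle| \leq \|\nabla\cdot \mathbb{C}\varepsilon(\bm u)\|_{(H^2(\Omega)\cap H^1_0(\Omega))^*}\|\bm\varphi\|_{H^2} + \|\bm u\|_{H^{1/2}(\partial\Omega)^*}\|(\mathbb{C}\varepsilon(\bm\varphi))\bm n\|_{H^{1/2}(\partial\Omega)},
\]
where the traction trace is controlled by $\|\bm\varphi\|_{H^2(\Omega)}$ via the standard trace theorem together with $\mathbb{C} \in C^{0,1}(\Omega)$. Taking the supremum over $\|\bm\varphi\|_{H^2} \leq 1$ and combining with the opening homeomorphism inequality closes the proof. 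I do not anticipate a real obstacle; the only point that requires care is the symmetry bookkeeping through the two integrations by parts, which is what guarantees that the surviving boundary term is a \emph{pure traction} on the test function, hence paired with $\bm u$ in exactly the norm $H^{1/2}(\partial\Omega)^*$ appearing in the statement.
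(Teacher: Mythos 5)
Your proposal is correct and follows essentially the same route as the paper's own proof: the composition of the Riesz isomorphism with the Banach adjoint of $-\nabla\cdot\mathbb C\varepsilon(\cdot)$, the double integration by parts exploiting the symmetry of $\mathbb C$ so that only the traction term $(\mathbb C\varepsilon(\bm\varphi))\bm n$ survives on the boundary, and the trace bound $\|(\mathbb C\varepsilon(\bm\varphi))\bm n\|_{H^{1/2}(\partial\Omega)}\lesssim\|\bm\varphi\|_{H^2(\Omega)}$. No gaps.
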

\begin{proof}
    By assumption we have that 
    \begin{equation*}
        -\nabla\cdot \mathbb C \varepsilon : H^2(\Omega)\cap H^1_0(\Omega) \to L^2(\Omega), \quad \bm u\mapsto -\nabla\cdot \mathbb C \varepsilon(\bm u)
    \end{equation*}
    is a linear homeomorphism. Specifically this implies
    \begin{equation}
        \|\bm u\|_{H^2(\Omega)} 
        \leq 
        C_{\textrm{reg}}\cdot \|\nabla\cdot \mathbb C \varepsilon(\bm u) \|_{L^2(\Omega)},
    \end{equation}
    Then also the map
    \begin{equation*}
        T:L^2(\Omega)^d \overset{R}{\longrightarrow} (L^2(\Omega)^d)^* \overset{(-\nabla\cdot \mathbb C \varepsilon)^{*}}{\longrightarrow} (H^2(\Omega)^d\cap H_0^1(\Omega)^d)^* 
    \end{equation*}
    is a linear homeomorphism, where we compose the Riesz isomorphism $R$ of $L^2(\Omega)^d$ and the Banach space adjoint of $-\nabla\cdot \mathbb C \varepsilon$. This means we can estimate
    \begin{equation}
        \| \bm u \|_{L^2(\Omega)} 
        \leq 
        C_{\textrm{reg}}\cdot \|T \bm u\|_{(H^2(\Omega)\cap H_0^1(\Omega))^*}.
    \end{equation}
    Now we assume that $u\in H^2(\Omega)^d$ and compute for $\varphi\in H^2(\Omega)^d\cap H^1_0(\Omega)^d$
    \begin{align*}
        \langle T \bm u, \bm \varphi \rangle
        =
        -\int_\Omega \bm u \cdot \nabla\cdot \mathbb C \varepsilon(\bm \varphi) \ \mathrm dx 
        &=
        \int_\Omega \nabla \bm u : \mathbb C \varepsilon(\bm \varphi) \ \mathrm dx - \int_{\partial\Omega} \bm u \partial_{\mathbb C} \bm \varphi \ \mathrm ds
        \\ 
        &=
        \int_\Omega  \varepsilon(\bm u) : \mathbb C \varepsilon(\bm \varphi) \ \mathrm dx - \int_{\partial\Omega} \bm  u \partial_{\mathbb C} \bm \varphi \ \mathrm ds
        \\
        &=
        -\int_\Omega \nabla\cdot\mathbb C\varepsilon(\bm u) \cdot \bm \varphi \ \mathrm dx - \int_{\partial\Omega} \bm u \partial_{\mathbb C} \bm \varphi \ \mathrm ds.
    \end{align*}
    setting $\partial_{\mathbb C} \bm \varphi = \mathbb C \varepsilon(\bm \varphi)\cdot \bm n$. Using the fact that $\partial_{\mathbb C} \bm \varphi \in H^{1/2}(\partial\Omega)^d$ we get
    \begin{equation*}
        \| T \bm u \|_{(H^2(\Omega)\cap H^1_0(\Omega))^*}
        \lesssim
        \|\nabla \cdot \mathbb C \varepsilon(\bm u)\|_{(H^2(\Omega)\cap H^1_0(\Omega))^*}
        +
        \| \bm u \|_{H^{1/2}(\partial\Omega)^*}
    \end{equation*}
    which concludes the proof.
\end{proof}

\subsection{Dual Estimate for Parabolic Equations}\label{sec:dual_parabolic}
\begin{lemma}[Dual Estimate] Assume we are in the setting of Section \ref{sec:parabolic} and set \[W = H^1(I, L^2(\Omega))\cap L^2(I, H^2(\Omega)\cap H^1_0(\Omega)),\] Then for all $u\in H^1(I,L^2(\Omega))\cap L^2(I, H^2(\Omega))$ we may estimate
\begin{align*}
    \|u\|_{L^2(I,L^2(\Omega))}
    &\lesssim
    \|u(0)\|_{L^2(\Omega)} 
    +
    \|\partial_tu - \nabla\cdot A\nabla u \|_{W^*}
    \\
    &+
    \| u \|_{{L^2(I, H^{1/2}(\partial\Omega))\cap H^{1/4}(I, L^2(\partial\Omega))^*}}.
\end{align*}
\end{lemma}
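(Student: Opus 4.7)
The plan is a standard duality argument: given a test function $\phi\in L^2(I,L^2(\Omega))$, I solve the \emph{backward adjoint} problem
\begin{equation*}
    -\partial_t v - \nabla\cdot A^T\nabla v = \phi \text{ in } I\times\Omega,\quad v=0 \text{ on } I\times\partial\Omega,\quad v(T)=0,
\end{equation*}
which after the time reversal $\tau=T-t$ is a standard forward parabolic problem with zero initial and boundary data and right--hand side $\phi$. Invoking the maximal parabolic regularity estimate \eqref{eq:parabolic_max_regularity} (which was stated precisely for the adjoint final--value problem with vanishing final time), the solution $v$ belongs to $W$ with $\|v\|_W\lesssim \|\phi\|_{L^2(I,L^2(\Omega))}$. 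Since $\phi$ was arbitrary, the result will follow by testing $u$ against $\phi$ and writing everything in terms of $v$.

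Next, I would compute the pairing $(u,\phi)_{L^2(I,L^2(\Omega))}$ by integration by parts, using the symmetry of $A$. Integrating by parts in time uses $v(T)=0$ and produces the initial term $(u(0),v(0))_{L^2(\Omega)}$. Integrating by parts twice in space, the fact that $v$ vanishes on $\partial\Omega$ kills one of the boundary contributions, leaving the identity
\begin{equation*}
    (u,\phi)_{L^2(I,L^2(\Omega))} = (u(0),v(0))_{L^2(\Omega)} + \langle \partial_t u - \nabla\cdot A\nabla u,v\rangle_{W^*,W} - \int_I\int_{\partial\Omega} u\,(A\nabla v\cdot n)\,\mathrm ds\,\mathrm dt.
\end{equation*}
Note that since $v\in W$ is admissible as a test function against $\partial_t u-\nabla\cdot A\nabla u\in W^*$, the middle term is well defined, and the last term makes sense because $u\in L^2(I,H^2(\Omega))$ has a trace on the parabolic boundary.

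It remains to bound each of the three terms by $\|v\|_W$ times the desired right--hand side quantities. The initial term is handled by the continuity of the time trace $W\hookrightarrow C(\overline I,L^2(\Omega))$, giving $\|v(0)\|_{L^2(\Omega)}\lesssim \|v\|_W$. The interior term is bounded directly by the definition of $\|\cdot\|_{W^*}$. For the boundary term I would use duality,
\begin{equation*}
    \Bigl|\int_I\int_{\partial\Omega} u\,(A\nabla v\cdot n)\,\mathrm ds\,\mathrm dt\Bigr| \leq \|u\|_{(L^2(I,H^{1/2}(\partial\Omega))\cap H^{1/4}(I,L^2(\partial\Omega)))^*}\,\|A\nabla v\cdot n\|_{L^2(I,H^{1/2}(\partial\Omega))\cap H^{1/4}(I,L^2(\partial\Omega))},
\end{equation*}
and invoke the anisotropic parabolic trace theorem (Lions--Magenes \cite{lions2012non}, or Weidemaier \cite{weidemaier2002maximal}) to conclude $\|A\nabla v\cdot n\|_{L^2(I,H^{1/2}(\partial\Omega))\cap H^{1/4}(I,L^2(\partial\Omega))}\lesssim \|v\|_W$. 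Combining the three bounds, applying maximal regularity $\|v\|_W\lesssim \|\phi\|_{L^2(I,L^2(\Omega))}$, and finally taking the supremum over $\phi$ with $\|\phi\|_{L^2(I,L^2(\Omega))}\le 1$ yields the claim.

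The main technical obstacle is the parabolic conormal--trace estimate $\|A\nabla v\cdot n\|_{L^2(I,H^{1/2}(\partial\Omega))\cap H^{1/4}(I,L^2(\partial\Omega))}\lesssim \|v\|_W$; everything else is bookkeeping with integration by parts and the abstract maximal regularity bound \eqref{eq:parabolic_max_regularity}. This trace estimate is precisely the sharp one that dictates the appearance of the space $L^2(I,H^{1/2}(\partial\Omega))\cap H^{1/4}(I,L^2(\partial\Omega))$ in the statement, and the reason why its dual norm shows up on the right--hand side.
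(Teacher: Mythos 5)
Your argument is correct and matches the paper's own proof essentially step for step: the paper also tests against $\phi\in L^2(I,L^2(\Omega))$, solves the backward adjoint problem with zero boundary and final-time data, invokes maximal parabolic regularity \eqref{eq:parabolic_max_regularity}, integrates by parts to produce the initial, interior and conormal boundary terms, and bounds the boundary term via the trace result of \cite{weidemaier2002maximal}. No gaps to report.
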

\begin{proof}
We write: 
\begin{align}
\|u\|_{L^2(I, L^2(\Omega))} = \sup_{\phi \in L^2(I;L^2(\Omega))} \frac{\int_I \int_{\Omega} u \phi }{\|\phi\|_{L^2(I;L^2(\Omega))}}. 
\end{align}
For a given $\phi \in L^2(I, L^2(\Omega))$, define the backward in time problem 
\begin{align}
-\partial_{t} \psi - \nabla \cdot (A^T \nabla  \psi)  = \phi ,  \quad \mathrm{in} \,\,  \Omega 
\end{align}
with zero boundary  and zero final time conditions. Then, by maximal parabolic regularity, there holds  
\begin{equation}
    \|\psi\|_{L^2(I, H^2(\Omega))\cap H^1(I,L^2(\Omega))}  \lesssim \|\phi\|_{L^2(I, L^2(\Omega))}. \label{eq:regular_parabolic_max}
\end{equation}
We now compute 
\begin{align*}
   \int_{I} \int_{\Omega} u \phi  
    &=
    -\int_I\int_\Omega u \partial_t \psi \, \mathrm dx\mathrm dt
    -
    \int_I\int_\Omega \nabla\cdot (A^T \nabla\psi) u  \, \mathrm dx\mathrm dt
    \\
    &=
    \int_\Omega u(0)\psi(0)\, \mathrm dx
    +
    \int_I\int_\Omega \partial_t u \psi \, \mathrm dx\mathrm dt
    \\
    &-
    \int_I\int_{\partial\Omega}u\partial^{A^T}_n\psi \,\mathrm dx\mathrm dt
    -
    \int_I\int_{\Omega}\nabla\cdot (A\nabla u)\psi \,\mathrm dx \mathrm dt,
\end{align*}
where $\partial^{A^T}_n\psi  = A^T\nabla\psi \cdot n$. With Cauchy--Schwarz and \eqref{eq:regular_parabolic_max}, this implies
\begin{align*} \|u\|_{L^2(I,L^2(\Omega))} 
    &\lesssim (\|u(0)\|_{L^2(\Omega)} + \|\partial_tu - \nabla\cdot (A\nabla u) \|_{W^*} 
    \\
    &+
    \| u \|_{{L^2(I, H^{1/2}(\partial\Omega))\cap H^{1/4}(I, L^2(\partial\Omega))^*}}).
\end{align*}
The estimate on the boundary term is due to the fact that $\partial^{A^T}_n\varphi$ is a member of the space $L^2(I, H^{1/2}(\partial\Omega))\cap H^{1/4}(I, L^2(\partial\Omega))$ for which we refer to \cite{weidemaier2002maximal}.\end{proof}

\subsection{Dual Estimate for Hyperbolic Equations}\label{sec:dual_hyperbolic}
\begin{lemma}
  For $u \in L^2(I, H^2(\Omega)) \cap H^2(I,L^2(\Omega))$, we have 
\begin{multline}
\|u\|_{H^1(I;L^2(\Omega))^*} \lesssim \|\partial_{tt} u - \nabla \cdot(A \nabla u )\|_{H^{2,2}(I \times \Omega)^*} + \|u\|_{H^{1/2,1/2}(\partial I \times \Omega)^*} \\  
+ \|u(0)\|_{H^{3/2}(\Omega)^*}+ \|\partial_t u(0)\|_{H^{1/2}(\Omega)^*}, 
\end{multline}
where $H^{2,2}(I \times \Omega) = L^2(I;H^2(\Omega)\cap H^1_0(\Omega)) \cap H^2(I;L^2(\Omega))$ and $H^{1/2,1/2}(I \times \partial \Omega) =L^2(0,T;H^{1/2}(\partial \Omega)) \cap H^{1/2}(0,T;L^2(\partial \Omega))$.
\end{lemma}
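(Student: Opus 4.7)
The plan is a duality argument that mirrors the parabolic dual estimate proven earlier. I write
\[
    \|u\|_{H^1(I, L^2(\Omega))^*}
    =
    \sup_{\phi} \frac{\int_I\int_\Omega u \phi \, \mathrm dx\, \mathrm dt}{\|\phi\|_{H^1(I, L^2(\Omega))}},
\]
where $\phi$ ranges over a dense subset of $H^1(I, L^2(\Omega))$. For each such $\phi$, I construct an auxiliary function $\psi$ by solving the backward adjoint wave problem
\[
    \partial_{tt}\psi - \nabla\cdot(A\nabla\psi) = \phi \text{ in } I\times\Omega, \quad \psi|_{I\times\partial\Omega} = 0, \quad \psi(T)=0, \; \partial_t\psi(T)=0.
\]
Symmetry of $A$ makes $\psi$ satisfy a wave equation of the same type as $u$.

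The first main step is to apply a strong-regularity theorem for the wave equation (in the vein of \cite[Chapter 3, Theorem 8.2]{lions2012non}) to the time-reversed problem to obtain $\psi \in H^{2,2}(I\times\Omega)$ together with the bound
\[
    \|\psi\|_{H^{2,2}(I\times\Omega)} \lesssim \|\phi\|_{L^2(I, L^2(\Omega))} \leq \|\phi\|_{H^1(I, L^2(\Omega))}.
\]
From this membership, the standard trace theorem for anisotropic Sobolev spaces yields $\psi(0) \in H^{3/2}(\Omega)$ and $\partial_t\psi(0) \in H^{1/2}(\Omega)$. Combined with the spatial trace on $\partial\Omega$ and the Bochner interpolation identity \eqref{eq:interpolation_bochner}, the same membership yields the conormal derivative $\partial_n^A\psi|_{I\times\partial\Omega} \in H^{1/2,1/2}(I\times\partial\Omega)$. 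All of these bounds are controlled by $\|\phi\|_{H^1(I, L^2(\Omega))}$.

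The second step is the integration by parts itself. Two IBPs in time and two in space, together with the homogeneous terminal conditions $\psi(T) = \partial_t\psi(T) = 0$, the homogeneous lateral condition $\psi|_{I\times\partial\Omega} = 0$, and the symmetry of $A$, produce the identity
\begin{align*}
    \int_I\int_\Omega u\phi \, \mathrm dx\,\mathrm dt
    &=
    \int_I\int_\Omega (\partial_{tt} u - \nabla\cdot(A\nabla u))\psi \, \mathrm dx\,\mathrm dt
    -
    \int_I\int_{\partial\Omega} u\, \partial_n^A\psi \, \mathrm ds\,\mathrm dt
    \\
    &\quad
    - \int_\Omega u(0)\,\partial_t\psi(0) \, \mathrm dx + \int_\Omega \partial_t u(0)\,\psi(0) \, \mathrm dx.
\end{align*}
Applying the natural duality pairings to each of the four terms on the right, combining with the $\psi$-bounds from Step~1, and dividing by $\|\phi\|_{H^1(I, L^2(\Omega))}$ then gives the right-hand side of the claim. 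Taking the supremum in $\phi$ finishes the proof.

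The main obstacle I anticipate is verifying the trace regularity $\partial_n^A\psi \in H^{1/2,1/2}(I\times\partial\Omega)$ with the bound $\lesssim \|\phi\|_{H^1(I, L^2(\Omega))}$. The $L^2(I, H^{1/2}(\partial\Omega))$ component is immediate from $\psi \in L^2(I, H^2(\Omega))$ and the standard spatial trace, but the temporal component $H^{1/2}(I, L^2(\partial\Omega))$ is more delicate. The approach is to interpolate $\psi$ between $L^2(I, H^2(\Omega))$ and $H^2(I, L^2(\Omega))$ via \eqref{eq:interpolation_bochner} to obtain $\nabla\psi \in H^{1/2}(I, H^{1/2}(\Omega))$, and then to apply the spatial trace on $\partial\Omega$. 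This parallels the maximal regularity results of \cite{weidemaier2002maximal} that the paper invokes in the parabolic setting.
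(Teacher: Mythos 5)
Your overall architecture is the same as the paper's: dualize against $\phi$, solve the backward adjoint wave problem with zero final and lateral data, integrate by parts twice in time and space, and control the resulting terms through the regularity of $\psi$. The integration-by-parts identity you write is correct. However, there is a genuine gap in your Step~1: the bound
\[
\|\psi\|_{H^{2,2}(I\times\Omega)} \lesssim \|\phi\|_{L^2(I,L^2(\Omega))}
\]
is false for the wave equation. Unlike the parabolic case, hyperbolic problems have no maximal $L^2$-regularity: with a forcing merely in $L^2(I,L^2(\Omega))$ and zero (final) data, the solution of the wave equation lies only in the energy class $C(\bar I,H^1_0(\Omega))\cap C^1(\bar I,L^2(\Omega))$ and in general \emph{not} in $L^2(I,H^2(\Omega))\cap H^2(I,L^2(\Omega))$. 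To reach $H^{2,2}(I\times\Omega)$ one needs the right-hand side one time-derivative smoother, i.e. $\phi\in H^1(I,L^2(\Omega))$, giving $\|\psi\|_{H^{2,2}(I\times\Omega)}\lesssim \|\phi\|_{H^1(I,L^2(\Omega))}$ (this is the regularity result from Lions--Magenes, Ch.~5, that the paper invokes; the zero final conditions take care of the compatibility requirements). This is precisely why the lemma estimates $\|u\|_{H^1(I;L^2(\Omega))^*}$ rather than $\|u\|_{L^2(I\times\Omega)}$: if your claimed bound held, your own argument would prove the stronger $L^2$ estimate, which is exactly the parabolic-type conclusion that hyperbolic equations do not admit. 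The fix is local — replace your regularity claim by the $H^1$-forcing version and the rest of your duality computation goes through unchanged.

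A secondary issue is your proposed route to the conormal trace bound. Interpolating to get $\nabla\psi\in H^{1/2}(I,H^{1/2}(\Omega))$ and then ``applying the spatial trace'' does not work, because the trace operator is not bounded on $H^{1/2}(\Omega)$ (one needs spatial regularity strictly above $1/2$). The correct tool is the trace theory for the anisotropic space $H^{2,2}(I\times\Omega)$ itself (Lions--Magenes, Ch.~4, Thm.~2.1), which directly yields $\partial_n^{A}\psi\in H^{1/2,1/2}(I\times\partial\Omega)$ together with $\psi(0)\in H^{3/2}(\Omega)$ and $\partial_t\psi(0)\in H^{1/2}(\Omega)$, all controlled by $\|\psi\|_{H^{2,2}(I\times\Omega)}$ and hence, after the correction above, by $\|\phi\|_{H^1(I,L^2(\Omega))}$.
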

 \begin{proof}
We write 
\begin{align*}
 \|u\|_{H^1(I;L^2(\Omega))^*} = \sup_{\phi \in H^1(I;L^2(\Omega))} \frac{\int_I \int_{\Omega} u \phi }{\|\phi\|_{H^1(I;L^2(\Omega))}}
\end{align*}
For a given $\phi \in H^1(I;L^2(\Omega))$, define the backward in time problem 
\begin{align*}
\partial_{tt} \psi - \nabla \cdot (A^T \nabla  \psi)  = \phi ,  \quad \mathrm{in} \,\,  \Omega
\end{align*}
with zero boundary condition and zero final time conditions. Then, $\psi \in H^{2,2}(I \times \Omega) $ \cite[Chapter 5, Theorem 2.1]{lions2012non} and  
$$\|\psi\|_{H^{2,2}( I \times \Omega)} \lesssim \|\phi\|_{H^1(I; L^2(\Omega))}.$$
With Green's theorem, we can then write 
\begin{align*}
\int_I \int_{\Omega} u \phi  = \int_I \int_\Omega (\partial_{tt} u - \nabla \cdot(A \nabla u)  \psi + \int_I\int_{\partial\Omega}u\partial^{A^T}_n\psi \,\mathrm dx\mathrm dt  \\ + \int_{\Omega} (\psi(0) \partial_t u(0) - \partial_t \psi(0) u (0)) 
\end{align*}
From trace theory 
\cite[Chapter 4, Theorem 2.1]{lions2012non}, we have that 
\begin{align}
\partial^{A^T}_n\psi \in H^{1/2,1/2}( I \times \partial \Omega), \,\, \psi(0) \in H^{3/2}(\Omega), \,\, \partial_t \psi(0) \in H^{1/2}(\Omega). 
\end{align}
The result then readily follows, the details are skipped for brevity. 
\end{proof}

\subsection{Dual Estimate for Stokes Equations} \label{sec:regularity_stokes} 
\begin{lemma}
For all $ (\bm u,p) \in H^2(\Omega)^d \times (H^1(\Omega) \cap L^2_0(\Omega))$, there holds 
\begin{align} \label{eq:dual_bound_stokes}
        \begin{split}
            \|\bm u\|_{L^2(\Omega)} 
            +
            \| p \|_{(H^1(\Omega)\cap L^2_0(\Omega))^*}
            &\lesssim
            \|  -\Delta \bm{u} + \nabla p \|_{(H^2(\Omega)\cap H^1_0(\Omega))^*}
            \\
            &+ \| \nabla \cdot \bm{u}\|_{(H^1(\Omega)\cap L^2_0(\Omega))^*} + 
            \|\bm{u} \|_{H^{1/2}(\partial\Omega)^*}
           .
        \end{split}
    \end{align}
\end{lemma}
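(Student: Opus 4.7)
The plan is to run a duality argument analogous to the parabolic case in Section~\ref{sec:dual_parabolic}, but using the steady Stokes regularity estimate \eqref{eq:stokes_h2_regularity} as the crucial input instead of maximal parabolic regularity. The idea is to simultaneously test $\bm u$ against an $L^2(\Omega)^d$ function and $p$ against an $H^1(\Omega)\cap L^2_0(\Omega)$ function by means of a single auxiliary Stokes problem with zero Dirichlet boundary data, then integrate by parts to transfer derivatives onto $\bm u$ and $p$.

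More precisely, I would start from the dual characterization
\begin{equation*}
    \|\bm u\|_{L^2(\Omega)} + \|p\|_{(H^1(\Omega)\cap L^2_0(\Omega))^*}
    = \sup_{(\bm\phi,r)} \frac{\int_\Omega \bm u\cdot\bm\phi\,\mathrm dx + \int_\Omega p\,r\,\mathrm dx}{\|\bm\phi\|_{L^2(\Omega)}+\|r\|_{H^1(\Omega)}},
\end{equation*}
where the supremum ranges over $\bm\phi\in L^2(\Omega)^d$ and $r\in H^1(\Omega)\cap L^2_0(\Omega)$ (treating $\bm\phi$, $r$ separately gives each of the two terms). For such test data, I would introduce the auxiliary Stokes system $-\Delta\bm v+\nabla q=\bm\phi$, $\nabla\cdot\bm v=r$ in $\Omega$, with $\bm v=0$ on $\partial\Omega$ and $q\in L^2_0(\Omega)$; the compatibility $\int_\Omega r=0$ is built in. By \eqref{eq:stokes_h2_regularity} applied to $(\bm v,q)$, one has $\|\bm v\|_{H^2(\Omega)}+\|q\|_{H^1(\Omega)}\lesssim\|\bm\phi\|_{L^2(\Omega)}+\|r\|_{H^1(\Omega)}$. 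Two integrations by parts, using $\bm v|_{\partial\Omega}=0$, then yield
\begin{equation*}
    \int_\Omega \bm u\cdot\bm\phi + \int_\Omega p\,r
    = \int_\Omega (-\Delta\bm u+\nabla p)\cdot\bm v - \int_\Omega q\,\nabla\cdot\bm u - \int_{\partial\Omega}\bm u\cdot(\partial_n\bm v - q\bm n)\,\mathrm ds.
\end{equation*}
Bounding the first term by $\|-\Delta\bm u+\nabla p\|_{(H^2(\Omega)\cap H^1_0(\Omega))^*}\|\bm v\|_{H^2}$, the second by $\|\nabla\cdot\bm u\|_{(H^1(\Omega)\cap L^2_0(\Omega))^*}\|q\|_{H^1}$ (this is where $q\in L^2_0(\Omega)$ is essential), and the boundary term by $\|\bm u\|_{H^{1/2}(\partial\Omega)^*}(\|\partial_n\bm v\|_{H^{1/2}(\partial\Omega)}+\|q\|_{H^{1/2}(\partial\Omega)})$ via the trace theorem, each factor involving the auxiliary pair is controlled by $\|\bm\phi\|_{L^2}+\|r\|_{H^1}$ through the regularity estimate. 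Dividing and taking the supremum yields \eqref{eq:dual_bound_stokes}.

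The main obstacle I anticipate is bookkeeping of the zero-mean condition and the boundary terms: I need $q$ to sit in $L^2_0(\Omega)$ so that the $(H^1\cap L^2_0)^*$ duality with $\nabla\cdot\bm u$ is legitimate (which is automatic from the solvability of the dual Stokes system), and the boundary term must combine $\partial_n\bm v$ and $q\bm n$ into a single $H^{1/2}(\partial\Omega)^d$ factor, which is exactly what trace theory for $H^2(\Omega)^d\times H^1(\Omega)$ provides. Once these points are handled, the estimate follows by the same template used in the parabolic and elliptic dual estimates in the appendix, and details can be written up very compactly.
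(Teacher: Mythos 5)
Your proposal is essentially the paper's own proof: the same dual characterization of $\|\bm u\|_{L^2(\Omega)}+\|p\|_{(H^1(\Omega)\cap L^2_0(\Omega))^*}$, the same auxiliary Stokes problem with homogeneous Dirichlet data solved for the test pair, the same $H^2\times H^1$ regularity estimate, and the same double integration by parts with trace estimates for $\partial_n\bm v$ and $q$ on the boundary. The only slip is the sign convention in the auxiliary divergence constraint: with $\nabla\cdot\bm v=r$ the identity produces $\int_\Omega(-\Delta\bm u-\nabla p)\cdot\bm v$ rather than $\int_\Omega(-\Delta\bm u+\nabla p)\cdot\bm v$, so you should take $\nabla\cdot\bm v=-r$ (as the paper does), which is harmless since $r\mapsto -r$ preserves the supremum.
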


\begin{proof}  Such a result in 2D can be found in \cite{aziz1985least}. For completeness,  we present a proof here for any $d$. We write
\begin{align}
    \|\bm{u}\|_{L^2(\Omega)}  +\|p\|_{(H^{1}(\Omega) \cap L^2_0(\Omega))^*} = \sup_{\bm{\phi}_1 \in L^2(\Omega)} \frac{(\bm{u},\bm{\phi_1})_{\Omega}}{\|\bm{\phi_1}\|_{L^2(\Omega)}} + \sup_{\phi_2 \in H^1(\Omega) \cap L^2_0(\Omega) } \frac{(p,\phi_2)_\Omega}{\|\phi_2\|_{H^1(\Omega)}}. \label{eq:def_negative_norm_2} 
\end{align}
To simplify notation, we define  $\bm{f}_1 = -\Delta \bm{u} + \nabla p$, 
   $ f_2  = \nabla \cdot \bm{u}$, and  
   $ \bm{g} = \bm u \vert_{\partial \Omega}$. For any $(\bm{\phi}_1 , \phi_2) \in L^2(\Omega)^d \times (H^1(\Omega) \cap L^2_0(\Omega))$, let $(\bm{\psi}, q) \in H_0^1(\Omega)^d \times (H^1(\Omega) \cap L^2_0(\Omega))$ satisfy
\begin{align}
    -\Delta \bm{\psi} + \nabla q & = \bm{\phi}_1, \quad \mathrm{in} \,\,  \Omega, \\ 
    \nabla \cdot \bm{\psi} & = - \phi_2,  \quad \mathrm{in} \,\, \Omega,   \\ 
    \bm{\psi}  & = 0,  \quad \mathrm{on} \,\,  \partial \Omega . 
\end{align}
From the regularity results for Stokes \cite[Proposition 2.2]{temam2001navier}, we also have that
\begin{equation}
\|\bm{\psi}\|_{H^{2}(\Omega)} + \|q\|_{H^{1}(\Omega)} \lesssim \|\bm{\phi}_1\|_{L^2(\Omega)} + \|\phi_2\|_{H^1(\Omega)}. \label{eq:regularity_dual_stokes}
 \end{equation}
 We then evaluate 
 \begin{align*}
    (\bm{u},\bm{\phi}_1) + (p,\phi_2) & = ( \bm{u}, -\Delta \bm{\psi} + \nabla q ) -  (\nabla \cdot \bm{\psi}, p)  \\ & = (-\Delta \bm{u}, \bm{\psi}) - (\nabla \bm{\psi} \bm{n}, \bm{g})_{\partial \Omega} + (\nabla \cdot \bm{u}, q) - (\bm{g}\cdot \bm{n},q)_{\partial \Omega} +  (\bm \psi, \nabla p)\\ 
    & = (\bm{f}_1, \bm{\psi})- (\nabla \bm{\psi} \bm{n}, \bm{g})_{\partial \Omega}+ (f_2, q) - (\bm{g}\cdot \bm{n},q)_{\partial \Omega} . 
    \end{align*}
    Hence, we have the bound:
\begin{multline} \nonumber
|(\bm{u}, \bm{\phi_1})| +|(p,\phi_2)| \leq \|\bm{f}_1\|_{(H^{2}(\Omega)\cap H^1_0(\Omega))^* } \|\bm{\psi} \|_{H^{2}(\Omega)} + \| \bm g\|_{H^{1/2}(\partial \Omega)^*} \|\nabla \bm{\psi} \, \bm n \|_{H^{1/2}(\partial \Omega)}   \\ + \|f_2\|_{(H^{1} (\Omega) \cap L^2_0(\Omega))^*} \|q \|_{H^{1}(\Omega)} + \|\bm g\|_{H^{1/2}(\partial \Omega)^*}\|q\|_{H^{1/2}(\partial \Omega)}. 
\end{multline}
Along with the trace estimates 
\[ \|\nabla \bm \psi \,  \bm n \|_{H^{1/2}(\partial \Omega)} \lesssim \| \bm \psi\|_{H^{2}(\Omega)}, \quad \|q\|_{H^{1/2}(\partial \Omega)} \lesssim \|q\|_{H^1(\Omega)}, \]
and bounds \eqref{eq:regularity_dual_stokes} and \eqref{eq:def_negative_norm_2}, we choose $\phi_2 = 0$ to conclude that bound  \eqref{eq:dual_bound_stokes} holds for the first term holds,  and we choose $\bm{\phi}_1 = \bm{0}$ to obtain the bound on the second term. 
   \end{proof}

\section{Further Numerical Experiments}
In this Section we present further numerical experiments. In Table \ref{table:errors_accurate_1} and \ref{table:errors_accurate_2}, we report errors for the same equations as considered in Section \ref{sec:numerics} with manufactured solutions \ref{table:manufactured_soltuions}. In this case we run the optimization for 5000 iterations. We see that we can improve the observed errors significantly. Again, the crucial ingredient for successful optimization is the recently proposed natural gradient method for the optimization of PINNs \cite{muller2023achieving}.

\begin{table}[h!]
\begin{tabular}{|c|c|c|c|} \hline 
    PDE        &  $w$  & $L^2$ error & $H^1$ error       \\  \hline  \hline 
    Poisson    &  64   & $1.84e-06$  & $1.13e-05$        \\  \hline 
    Elasticity &  64   & $2.06e-04$  & $6.03e-03$        \\  \hline 
    Parabolic  &  64   & $6.17e-05$  & $5.34e-04$        \\  \hline 
    Hyperbolic &  64   & $3.12e-05$  & $4.42e-04$        \\  \hline
\end{tabular}    
\caption{Computed $L^2$ and $H^1$ errors of the PINN formulation using a $\tanh$ shallow neural network of width $w$ \emph{with a bigger computational budget}. The manufactured solutions considered are given in Table~\ref{table:manufactured_soltuions} }
\label{table:errors_accurate_1}
\end{table}

\begin{table}[h!]
\begin{tabular}{|c|c|c|c|c|} \hline 
    PDE                & $w_1$ & $w_2$    & $L^2$  & $H^1$    \\  \hline  \hline 
 \multirow{2}{*}{Stokes}  & \multirow{2}{*}{32}    &  \multirow{2}{*}{32}        &  \multirow{2}{*}{$\bm u:\,\, 1.41e-04$}   & $\bm u:\,\, 2.19e-03$    \\   
                  &     &        &    & $p: \,\, 1.93e-02$    \\  \hline 
    \multirow{2}{*}{Transient Stokes}  & \multirow{2}{*}{32}    &  \multirow{2}{*}{32}        &  \multirow{2}{*}{$\bm u:\,\,  9.69e-05$}   & $\bm u:\,\, 1.62e-03$    \\   
                 &     &        &    & $p: \,\, 1.37e-02$    \\  \hline 
 \multirow{2}{*}{Darcy}  & \multirow{2}{*}{32}    &  \multirow{2}{*}{32}        &  $\bm \sigma:\,\,  1.01e-04$  & \multirow{2}{*}{--}\\   
                 &     &        &  $p: \,\, 7.79-06$  &     \\  \hline    
\multirow{2}{*}{Inverse}  & \multirow{2}{*}{32}    &  \multirow{2}{*}{32}        &  $\bm u:\,\, 7.88e-05 $  & \multirow{2}{*}{$\bm u: \,\, 1.36e-03$ }  \\   
                  &     &        &  $f: \,\, 1.20e-03$  &     \\  \hline    \end{tabular}    
\caption{Computed $L^2$ and $H^1$ errors of the PINN formulation using a $\tanh$ shallow neural network of width $w$ \emph{with a bigger computational budget}. The manufactured solutions considered are given in Table~\ref{table:manufactured_soltuions}. The  error in the pressure for the steady and transient Stokes equations is measured in the $H^1(\Omega)$ and $L^2(I,H^1(\Omega))$ semi-norms, respectively.}
\label{table:errors_accurate_2}
\end{table}

\bibliography{references}
\bibliographystyle{plain}

\end{document}